\newtheorem{theorem}{Theorem}[section]
\newtheorem{proposition}[theorem]{Proposition}
\newtheorem{lemma}[theorem]{Lemma}
\newtheorem{corollary}[theorem]{Corollary}
\theoremstyle{definition}
\newtheorem{definition}[theorem]{Definition}
\theoremstyle{remark}
\newtheorem{remark}[theorem]{Remark}
\newtheorem{example}[theorem]{Example}
\newcommand{\Z}{\ensuremath{\mathbb{Z}}}
\newcommand{\F}{\ensuremath{\mathbb{F}}}
\newcommand{\deff}[1]{\textbf{\emph{#1}}}
\newcommand{\hyph}{\mbox{-}}
\newcommand{\symdif}{\mathbin{\bigtriangleup}}
\DeclareMathOperator{\aff}{aff}
\DeclareMathOperator{\qc}{QC}
\DeclareMathOperator{\AG}{AG}
\title[How Many Cards Should You Lay Out in Quad-128]{How Many Cards Should You Lay Out in Quad-128: A Classification of Caps in $\AG(7,2)$}
\author[Calta]{Kariane Calta}
\address{Department of Mathematics \& Statistics, Vassar College, Poughkeepsie, New York 12604}
\email{kacalta@vassar.edu}
\author[Goldberg]{Timothy E.~Goldberg}
\address{Department of Mathematics, Lenoir-Rhyne University, Hickory, North Carolina 28601}
\email{timothy.goldberg@lr.edu}
\author[Rose]{Lauren L.~Rose}
\address{Department of Mathematics, Bard College, Annandale-on-Hudson 12504}
\email{rose@bard.edu}
\date{\today}
\keywords{finite geometry, affine geometry, recreational mathematics, combinatorics, Sidon sets, caps, EvenQuads}
\subjclass{05B25, 51E10, 51E15}
\begin{document}

\begin{abstract}
We define a cap in the affine geometry $\AG(n,2)$ to be a subset in which every collection of four points is in general position. In this paper, we classify, up to affine equivalence, all caps in $\AG(7,2)$ of size $k \geq 10$.  In particular, we show that there are two equivalence classes of $10$-caps and one equivalence class of $11$-caps, none of which are complete, and one equivalence class of $12$-caps, which are both complete and of maximum size.
\end{abstract}

\maketitle

\tableofcontents

\section{Introduction}

\emph{Quads} is a pattern recognition game similar to the popular card game SET. A standard Quads deck consists of $64$ cards, each with one, two, three, or four of the same symbol, in one of four colors. A complete Quads deck is shown in Figure~\ref{fig:quad64}, with triangles, squares, circles, and hearts as symbols. The goal of the game is to find \textbf{quads}, collections of four cards that satisfy a specific pattern, namely that in each attribute (number, symbol, shape) the four cards are all the same, all different, or two pairs of types. 

For example, the cards \emph{1-red triangle}, \emph{2-green squares}, \emph{3-red hearts}, and \emph{4-blue circles} form a quad, since the numbers are all different, there are two red and two green cards, and the shapes are all different.
\begin{figure}[ht]
    \centering
    \includegraphics[width=0.4\linewidth]{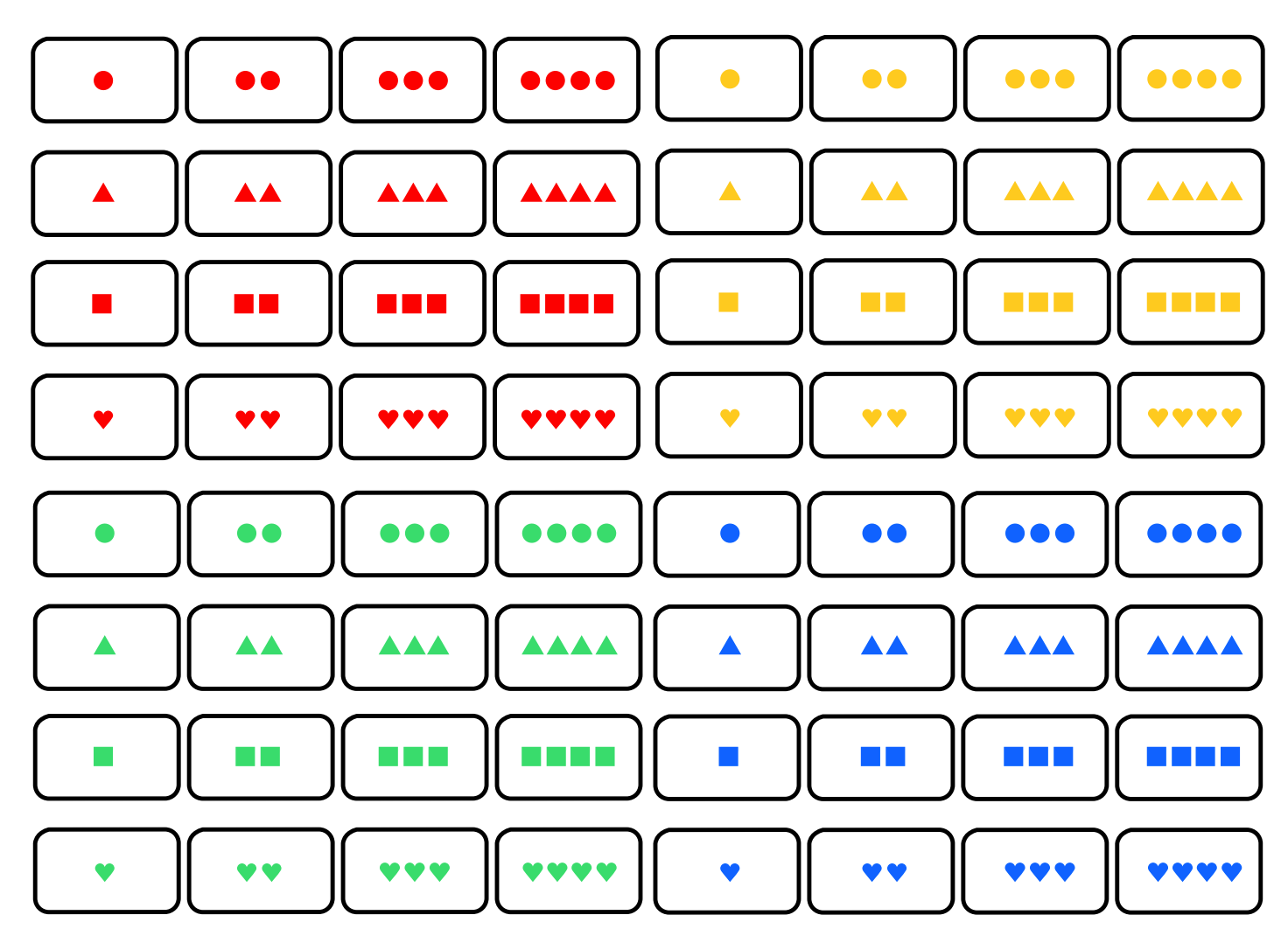}
    \caption{Quad-64 Deck}
    \label{fig:quad64}
\end{figure}
Quads cards can be coded with vectors in $\Z_2^6$, and easily generalized to $\Z_2^n$.  See \cite{lamatpaper} or \cite{rose-chapter} for details.

Once we do this, a quad has a nice mathematical interpretation.

\bigskip

\begin{theorem}\cite{lamatpaper}
Four distinct cards $A, B, C, D$ form a quad if and only if as vectors, $A + B + C + D = \mathbf{0}$.
\end{theorem}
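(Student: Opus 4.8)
The plan is to reduce the statement to a single fact about one attribute and then reassemble the attributes coordinatewise. Under the encoding of \cite{lamatpaper}, a card is identified with a vector in $\Z_2^6$, which we regard as a triple $(\vec u_1,\vec u_2,\vec u_3)$ with $\vec u_i \in \Z_2^2$ recording the $i$th of the three attributes, the four possible values of each attribute being in bijection with the four elements of $\Z_2^2$. Since four cards form a quad exactly when, in each attribute, the corresponding four values are all equal, all distinct, or two pairs, and since addition in $\Z_2^6 = (\Z_2^2)^3$ is performed block by block, the theorem follows once we prove the following claim: for $a,b,c,d \in \Z_2^2$ (not necessarily distinct), the multiset $\{a,b,c,d\}$ is ``all equal, all distinct, or two pairs'' if and only if $a+b+c+d=\mathbf 0$.

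To prove the claim I would argue by cases on the partition type of $\{a,b,c,d\}$, of which there are exactly five: $4$ (all equal), $3{+}1$, $2{+}2$ (two pairs), $2{+}1{+}1$, and $1{+}1{+}1{+}1$ (all distinct). In characteristic $2$ we have $x+x=\mathbf 0$, so the all-equal type gives $a+b+c+d=\mathbf 0$ and the two-pairs type $\{x,x,y,y\}$ gives $x+x+y+y=\mathbf 0$. For the all-distinct type the four values exhaust $\Z_2^2 = \{(0,0),(1,0),(0,1),(1,1)\}$, and these four elements sum to $\mathbf 0$. Conversely, type $3{+}1$ gives $x+x+x+y=x+y\neq\mathbf 0$ since $x\neq y$, and type $2{+}1{+}1$ gives $x+x+y+z=y+z\neq\mathbf 0$ since $y\neq z$; so neither of the two forbidden patterns can have zero sum. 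This handles both directions of the claim.

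Finally I would assemble the pieces: writing $A=(\vec a_1,\vec a_2,\vec a_3)$ and similarly for $B,C,D$, we have $A+B+C+D=\mathbf 0$ in $\Z_2^6$ if and only if $\vec a_i+\vec b_i+\vec c_i+\vec d_i=\mathbf 0$ in $\Z_2^2$ for each $i$, which by the claim holds if and only if every attribute is all-same, all-different, or two pairs, i.e.\ if and only if $A,B,C,D$ form a quad. The hypothesis that the four cards are distinct plays no role in the equivalence itself; it is simply part of what ``quad'' means. The argument is elementary throughout; the only step that takes a moment's thought is the all-distinct case, where one must invoke the special feature that $\Z_2^2$ has exactly four elements and that they sum to zero — this is the one place where the size of the value set of an attribute and the number of cards in a quad genuinely interact.
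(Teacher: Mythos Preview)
Your argument is correct. The paper does not actually prove this theorem; it merely quotes it from \cite{lamatpaper}, so there is no in-paper proof to compare against. Your coordinatewise reduction to $\Z_2^2$ and the five-case partition analysis is exactly the natural proof, and each case is handled cleanly: the three admissible patterns sum to $\mathbf{0}$ (using $2x=\mathbf{0}$ for the first two and the explicit sum of all four elements of $\Z_2^2$ for the all-distinct case), while the two forbidden patterns reduce to a sum of two distinct elements, which is nonzero. Your remark that distinctness of the cards is a definitional hypothesis rather than something used in the equivalence is also accurate.
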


\bigskip

This is a parallel to SET, where cards can be coded with vectors in $\Z_3^4$, and generalized to $\Z_3^n$. In this case, three distinct cards form a SET if and only if their vector sum is zero.

The celebrated \deff{Cap Set Problem} is to find the maximum size of a SET-free collection of cards, called a \deff{cap}. The answer is only known for $n \leq 6$, and the best known upper bound is $(2.756)^n$, due to Ellenberg and Gijswijt (\cite{ellenberg}). See \cite{maclagan} or \cite[Chapter 9]{Rosemcmahon2016joy} for a detailed introduction to the \emph{Cap Set} problem. 

In this paper, we study the analogous problem for Quads, but expand our study to the classification of all caps, not just those of maximum size. Quad-free sets are called \deff{$2$-caps} in \cite{Bennett} and \deff{Sidon Sets} in \cite{taitwon}, \cite{codes}, and \cite{nagy}, for example. The latter two papers make use of a natural connection between Sidon Sets and linear codes. In a slight abuse of definition, we call a quad-free set of cards a \deff{cap}. 

In \cite{taitwon}, Tait and Won found asymptotic bounds for the maximum size $M(n)$ of a cap in $\Z_2^n$:
\[
    \frac{1}{\sqrt{2}} \, 2^{n/2}
    \leq M(n)
    \leq 1 + \sqrt{2} \cdot 2^{n/2}.
\]
A \deff{complete cap} is a maximal quad-free set, but not necessarily of  maximum size. In \cite{RedmanRoseWalker}, Redman, Rose, and Walker constructed a small complete cap in $\Z_2^n$ for each $n$. 

The sizes of complete caps in dimensions $7$ and $8$ can be verified computationally, and the maximum caps sizes for dimensions $9$ and $10$ have been determined using libraries of even linear codes \cite{nagy}. 


In \cite{lamatpaper}, Crager et al.~classified caps of up to $9$ cards in any dimension, and all caps in  $\AG(n,2)$ for $n \leq 6$, up to affine equivalence, using the fact that the encoding by vectors in $\Z_2^n$ is a model for the affine geometry $\AG(n,2)$.


This paper picks up where \cite{lamatpaper} leaves off. Our goal is to classify caps in $\Z_2^7$, which corresponds to the card deck Quad-128. One way to construct Quad-128 from the standard deck, Quad-64, is to add a half-attribute, say background color, with two states instead of four, as seen in Figure~\ref{fig:q128}.
\begin{figure}[ht]
    \centering
    \includegraphics[width=4.13cm]{quad64.png}
    \includegraphics[width=4cm]{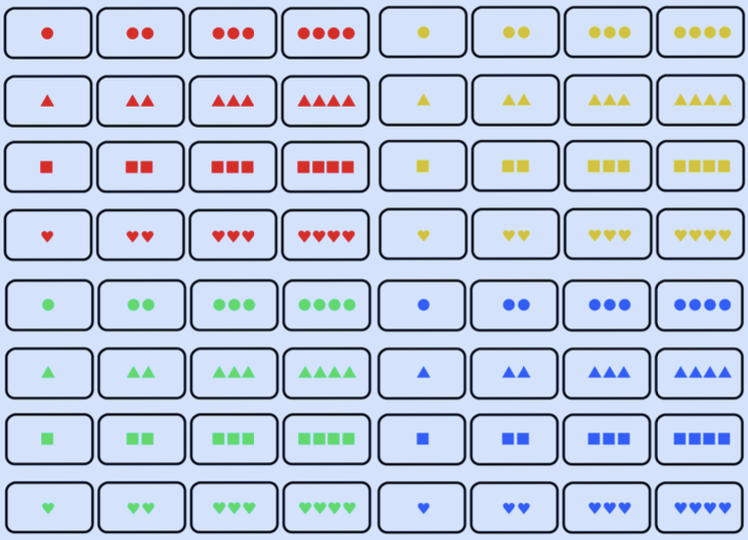}
    \caption{A Quad-128 deck}
    \label{fig:q128}
\end{figure}

Since caps in $\Z_2^6$ can be embedded as caps in $\Z_2^7$, the results from \cite{lamatpaper} describe all caps of size up to $9$ in $\Z_2^7$. Our focus in this paper is on caps of size $10$ or higher. Our primary result is a classification of these caps into affine equivalence classes, along with explicit algebraic and combinatorial constructions based on the affine dependence relations among cap elements. 

\bigskip

\begin{theorem}
    In $\AG(7,2)$:
    \begin{enumerate}
        \item There are two affine equivalence classes of caps of size $10$.
        \item There is one affine equivalence class each of caps of sizes $11$ and $12$.
    \end{enumerate}
\end{theorem}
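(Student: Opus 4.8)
The plan is to classify the caps of size $10$, $11$, and $12$ by an inductive extension argument, starting from the classification of all caps of size $\leq 9$ (valid in every dimension, and in particular inside $\AG(6,2)$) obtained in \cite{lamatpaper}. Two elementary facts will drive the induction. First, if $C$ is a cap and $p\notin\aff(C)$, then $C\cup\{p\}$ is again a cap, because a quad involving $p$ would require distinct $a,b,c\in C$ with $p=a+b+c$, and $a+b+c$ is an affine combination over $\mathbb{F}_2$, forcing $p\in\aff(C)$. Hence no cap that fails to span $\AG(7,2)$ is complete, and any such cap is affinely equivalent to a cap in some $\AG(d,2)$ with $d\leq 6$; since \cite{lamatpaper} shows there is no cap of size $\geq 10$ in $\AG(d,2)$ for $d\leq 6$, \emph{every} cap of size $\geq 10$ in $\AG(7,2)$ spans the whole space. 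Second, deleting a point from a spanning $k$-cap leaves a cap of affine dimension at least $6$, since adjoining a point raises affine dimension by at most one. Combining the two: every $10$-cap arises from a $9$-cap of affine dimension $6$ or $7$ by adjoining one point, every $11$-cap from a necessarily spanning $10$-cap, and every $12$-cap from a spanning $11$-cap. Note that only the first of these three steps requires seeds of less-than-full rank.

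For the extension step I would proceed class by class. For each affine-equivalence class of $9$-caps of dimension $6$ or $7$ on the list of \cite{lamatpaper}, fix a concrete representative $C\subseteq\mathbb{F}_2^7$ and compute its setwise stabilizer $\Gamma_C\leq\mathrm{AGL}(7,2)$. The admissible extension points are the $p\in\mathbb{F}_2^7\setminus C$ with $p\neq a+b+c$ for all distinct $a,b,c\in C$; when $C$ has rank $6$, spanning a hyperplane $H$, these are exactly the $64$ points off $H$ (all admissible, and none inside $H$ since no $10$-cap lives in $\AG(6,2)$), while for rank $7$ one must list them directly. Since affinely equivalent extensions produce affinely equivalent caps, it suffices to take one $p$ from each $\Gamma_C$-orbit of admissible points, yielding a finite list of candidate $10$-caps. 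Assembling all candidates over all $9$-cap classes, one then partitions them into $\mathrm{AGL}(7,2)$-orbits, separating inequivalent caps by an affine invariant — I would use the \emph{affine dependence profile}, the multiset recording, for each even $m$, the number of $m$-subsets of the cap summing to $\mathbf 0$ (equivalently the weight enumerator of the associated even binary code), refined if needed by the stabilizer order — and confirming any remaining coincidences by exhibiting explicit elements of $\mathrm{AGL}(7,2)$. This should collapse to exactly two classes of $10$-caps.

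Running the same procedure one step further — extending each $10$-cap class, then each $11$-cap class — should produce one class of $11$-caps and one class of $12$-caps. To finish the picture I would check that the representative $12$-cap admits no admissible extension, which simultaneously shows it is complete and that $M(7)=12$ (consistent with \cite{nagy} and the Tait--Won bounds), while the representative $10$- and $11$-caps do admit extensions and so are not complete. Throughout, the explicit algebraic and combinatorial description recorded for each representative (a union of flats, a coning construction, or the column set of a short even code) serves as an independent check on the search.

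The main obstacle is not a single conceptual difficulty but the faithful execution and verification of a finite yet delicate computation. Three points demand care: (i) correctly importing the $9$-cap classification from \cite{lamatpaper}, retaining the rank-$6$ classes (each of which embeds essentially uniquely into $\AG(7,2)$, since all hyperplanes are equivalent); (ii) enumerating $\Gamma_C$-orbits of extension points, where the bookkeeping rather than the size is the issue, as $\AG(7,2)$ has only $128$ points; and (iii) deciding $\mathrm{AGL}(7,2)$-equivalence among the caps that emerge — ruling out ``accidental'' inequivalences and collapsing apparent duplicates — for which one needs a genuinely complete computable invariant or, failing that, explicit transporting maps. I expect (iii) to be where most of the work lies.
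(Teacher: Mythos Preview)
Your approach is sound but takes a genuinely different route from the paper. Rather than enumerate $\mathrm{AGL}(7,2)$-orbits of one-point extensions, the paper works with the \emph{extended type} of a chosen affine basis $B$ for the cap: writing $D=C\setminus B=\{x_1,\ldots,x_r\}$ and $B_i\subseteq B$ for the set of basis elements summing to $x_i$, the extended type records the tuple $(|B_1|,\ldots,|B_r|)$ together with all pairwise intersection sizes $|B_i\cap B_j|$. A short chain of lemmas (each $|B_i|\in\{5,7\}$; at most one equal to $7$; each $|B_{ij}|\in\{2,3\}$ when $|B_i|=|B_j|=5$; and $|B_1\cup\cdots\cup B_r|=8$ by inclusion--exclusion against the $\AG(6,2)$ bound) prunes the possible extended types to a handful for each $r\in\{2,3,4\}$, and for each surviving type an explicit labelling $B=\{a_1,\ldots,a_8\}$ together with formulas for the $x_i$ gives a canonical \emph{template}, so that any two caps fitting the same template are equivalent via the obvious bijection of bases. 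An ``Affine Basis Exchange'' lemma (swapping one $a_i$ with one $x_j$) then shows which extended types are realized by the same cap, collapsing the list to two classes for $r=2$ and one each for $r=3,4$; the two $10$-cap classes are separated by whether every cap point lies in some affine dependence relation, which is essentially your dependence-profile invariant. The paper's argument is entirely by hand, yields explicit representatives, and never computes a stabilizer or runs an orbit search; your approach is more systematic and automatable, but, as you correctly anticipate, defers the real content to the equivalence-testing step~(iii), which the paper's template machinery sidesteps.
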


\bigskip

We also have the following corollary.

\bigskip

\begin{corollary} \leavevmode
    \begin{enumerate}
        \item The maximum cap size in $\AG(7,2)$ is $12$.
        \item A cap in $\AG(7,2)$ is complete if and only if it has size $12$.
        \item No cap of size $10$ or $11$ is complete in  $\AG(n,2)$, for any $n$.
    \end{enumerate}
\end{corollary}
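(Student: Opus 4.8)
The plan is to deduce all three parts from the classification theorem together with one elementary counting inequality. Adjoining a point $p$ to a cap $C \subseteq \AG(n,2)$ destroys the cap property exactly when $p = a+b+c$ for three distinct $a, b, c \in C$; note that such an equation forces $p \notin C$, since $a+b+c+d = \mathbf{0}$ with $a,b,c,d \in C$ distinct would contradict $C$ being a cap. Hence $C$ is complete if and only if every point of $\AG(n,2) \setminus C$ is of the form $a+b+c$ with $a,b,c$ distinct in $C$. Since there are only $\binom{|C|}{3}$ such triples, a complete cap of size $k$ in $\AG(n,2)$ must satisfy
\[
    2^n - k \;\leq\; \binom{k}{3}.
\]
Used in both directions, this inequality reduces the ``for every $n$'' assertion of part (3) to the single geometry $\AG(7,2)$, where the classification theorem takes over, and it pins down the size of any complete cap in $\AG(7,2)$.

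For part (3), let $C$ be a cap in $\AG(n,2)$ with $k := |C| \in \{10, 11\}$, and suppose for contradiction that $C$ is complete. The inequality gives $2^n \leq k + \binom{k}{3} \leq 11 + 165 = 176 < 2^8$, so $n \leq 7$. On the other hand, by the classification of all caps in $\AG(m,2)$ for $m \leq 6$ in \cite{lamatpaper} there is no cap of size $\geq 10$ in dimension at most $6$, so in fact $n = 7$. Now the classification theorem applies: $C$ is affinely equivalent to one of the two representatives of the $10$-cap classes or to the representative of the unique $11$-cap class. For each of these three caps we exhibit a point whose adjunction preserves the cap property, so none of them is maximal; hence $C$ is not complete, a contradiction. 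Therefore no cap of size $10$ or $11$ is complete in $\AG(n,2)$ for any $n$.

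Parts (1) and (2) then follow quickly, working in $\AG(7,2)$. The theorem produces a $12$-cap, so the maximum cap size is at least $12$; fix a representative $\mathcal{C}$. A direct check shows $\mathcal{C}$ is complete, that is, each of the $2^7 - 12 = 116$ points outside $\mathcal{C}$ is a sum of three of its elements; since completeness is an affine invariant and, by the theorem, all $12$-caps are affinely equivalent, every $12$-cap is complete. This is the ``if'' direction of part (2). Were there a $13$-cap, deleting one of its points would leave a $12$-cap, which by the theorem is an affine image of $\mathcal{C}$, and then $\mathcal{C}$ would extend to a larger cap, contradicting its completeness. So the maximum cap size in $\AG(7,2)$ is exactly $12$, which is part (1). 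For the remaining direction of part (2), let $C \subseteq \AG(7,2)$ be complete; the counting inequality forces $|C| + \binom{|C|}{3} \geq 128$, which fails for $|C| \leq 9$ since $9 + \binom{9}{3} = 93 < 128$, so $|C| \geq 10$. Part (3) rules out $|C| \in \{10, 11\}$ and part (1) rules out $|C| \geq 13$, leaving $|C| = 12$.

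The only ingredients above that are not purely formal consequences of the classification theorem are the finite verifications: that the chosen $12$-cap $\mathcal{C}$ covers all $116$ remaining points of $\AG(7,2)$, and that each of the two $10$-cap representatives and the $11$-cap representative admits an extension. These checks are the main obstacle, but they are precisely the bookkeeping carried out --- by computer, or directly from the explicit algebraic descriptions of the caps in terms of their affine dependence relations --- in the course of proving the classification theorem; granting that theorem, the rest of the corollary is a short deduction from the inequality $2^n - k \leq \binom{k}{3}$.
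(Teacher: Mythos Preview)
Your overall strategy is sound and the counting inequality is exactly the tool the paper uses, but you diverge from the paper on one key point and misattribute a verification to the classification theorem.

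First a minor correction: the paper defines completeness relative to $\aff(C)$, not to the ambient $\AG(n,2)$, so your inequality should read $2^{\dim(C)} \leq k + \binom{k}{3}$ rather than $2^n \leq k + \binom{k}{3}$. With that fix your argument for part~(3) matches the paper's (Corollaries~\ref{10capdim7incomplete} and~\ref{11notcomplete}): counting handles $\dim(C) > 7$, and explicit extensions from the dependent-set templates handle $\dim(C) = 7$.

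The substantive difference is in how you establish part~(1). The paper does \emph{not} verify directly that a representative $12$-cap covers all $116$ remaining points of $\AG(7,2)$. Instead, Theorem~\ref{no13caps} proves non-existence of $7$-dimensional $13$-caps by an inclusion--exclusion argument: any basis for such a cap must have type $5\hyph 5\hyph 5\hyph 5\hyph 5$ with a forced pattern of pairwise intersection sizes, and computing $|B_{12345}|$ two ways yields a contradiction. Completeness of every $12$-cap then follows for free, since maximum implies maximal. You reverse this logic: verify completeness of the $12$-cap representative first, then deduce there is no $13$-cap. That is a legitimate alternative route, but your claim that the $116$-point check is ``precisely the bookkeeping carried out \ldots\ in the course of proving the classification theorem'' is incorrect---the paper never performs that check, and the classification theorem as stated says nothing about completeness. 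So your proof has a genuine gap unless you actually carry out (or cite) that enumeration; the paper's inclusion--exclusion argument sidesteps it entirely.
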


\bigskip

\begin{remark}
In terms of the card game Quad-128, these results mean that you must lay out $13$ or more cards to ensure the presence of a quad. 
\end{remark}


\section[Fundamental notions in AG(n,2)]{Fundamental notions in $\AG(n,2)$}

The set $\AG(n,q)$ is the finite affine geometry of dimension $n$ and order $q$. A standard model for $\AG(n,q)$ consists of vectors in $\F_q^n$, where $\F_q$ is the field of order $q$. In this model the affine subspaces (called \emph{flats}) are just translates of linear subspaces. A detailed description of the geometry of $\AG(n,q)$, and in particular $\AG(n,2)$, can be found in \cite{lamatpaper}. For the convenience of the reader, we recall the most relevant definitions and properties here. Let $V$ be a vector space over a field.

An \deff{affine combination} of a set $A \subseteq V$ is a linear combination of elements of $A$ whose coefficients sum to $1$. The set $A$ is \deff{affinely independent} if no element is an affine combination of the others. The \deff{affine span} of $A$, denoted $\aff(A)$, consists of all affine combinations of elements of $A$. The \deff{flats} in $V$ are the affine spans of non-empty sets. An \deff{affine basis} $B$ for a flat $F$ is a set of affinely independent elements whose (affine) span is $F$. The \deff{dimension} of $F$ is $\dim(F) = |B| - 1$. Equivalently, if $F = W + v$, where $W$ is a linear subspace and $v \in V$, then $\dim(F) = \dim(W)$.

An \deff{affine transformation} $T : V \rightarrow  W$ is a map of the form $T(x) = L(x) + b$ where $L$ is a linear map and $b \in W$ is fixed. Affine transformations map affinely dependent sets to affinely dependent sets.
Two subsets are \deff{affinely equivalent} if there is an affine isomorphism (invertible affine transformation) that maps one onto the other.

In the case of $\Z_2^n$, the notion of an affine combination becomes much simpler. 

\bigskip

\begin{lemma} \label{lem:odd-sum}
    An affine combination of a set $S$ in $\Z_2^n$ is exactly a sum of an odd number of elements of $S$.
\end{lemma}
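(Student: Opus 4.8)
The plan is to unwind the definitions, exploiting the fact that in $\Z_2^n$ the only available scalars are $0$ and $1$, so there is essentially no freedom in forming linear combinations.

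First I would observe that any linear combination of elements of a set $S \subseteq \Z_2^n$ reduces, after collecting like terms and reducing coefficients modulo $2$, to a sum $\sum_{s \in T} s$ over some finite subset $T \subseteq S$: each element of $S$ either appears (coefficient $1$) or does not (coefficient $0$). Since every summand contributes a coefficient equal to $1$, the sum of the coefficients of this reduced combination is $\abs{T} \bmod 2$. Then I would invoke the definition of an affine combination, namely that the coefficients sum to $1$: by the previous sentence this holds exactly when $\abs{T}$ is odd. This yields the forward implication — every affine combination of $S$ equals a sum of an odd number of (distinct) elements of $S$. The reverse implication is immediate, since the sum of an odd-size subset $T \subseteq S$ is a linear combination all of whose coefficients equal $1$, and these sum to $\abs{T} \equiv 1 \pmod 2$.

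The only step requiring any care — and the closest thing to an obstacle here — is the bookkeeping in the reduction from an arbitrary formal linear combination to an honest subset sum: one should note that permitting an element of $S$ to be listed multiple times, or with a coefficient other than $0$ or $1$, alters neither the value of the sum (since $2v = \mathbf{0}$ in $\Z_2^n$) nor the parity of the coefficient sum. Consequently "a sum of an odd number of elements of $S$" may be read interchangeably as "the sum of an odd-size subset of $S$" or as "the sum of an odd-length list of not-necessarily-distinct elements of $S$," and the lemma follows in either reading.
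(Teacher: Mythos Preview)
Your proposal is correct and follows essentially the same approach as the paper: both arguments rest on the observation that the only scalars in $\Z_2$ are $0$ and $1$, so a linear combination is a subset sum and the coefficient-sum condition becomes a parity condition on the number of terms. Your version is simply more detailed, in particular spelling out the reduction to a subset sum and the equivalence of the ``odd-size subset'' and ``odd-length list'' readings, whereas the paper dispatches the lemma in a single sentence.
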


\begin{proof}
    This follows immediately from the fact that the only scalars are the elements of $\Z_2 = \{0,1\}$, and a sum of these scalars equals $1$ if and only if there is an odd number of ones in this sum.    
\end{proof}

It follows from the lemma above that the affine span of a set $S$ in $\Z_2^n$ is the minimal superset of the $S$ that is closed under sums of odd numbers of elements.

\bigskip

\begin{definition}
    Let $S \subseteq \Z_2^n$. An \deff{affine dependence relation} of elements of $S$ is an expression of one element as an affine combination (i.e.~sum of an odd number) of other elements, or any equation between sums of elements that can be rearranged into this form.
\end{definition}

\bigskip

\begin{remark} \label{preserved-dependencies}
Since affine transformations map affinely dependent sets to affinely dependent sets, it follows that they preserve affine dependence relations. Hence if $S_1$ and $S_2$ are affinely equivalent via some affine isomorphism $f$, then $f$ maps the sets of all affine dependence relations in $S_1$ and $S_2$ to each other in a one-to-one fashion.
\end{remark}




\bigskip

\begin{definition}
    Let $S \subseteq \Z_2^n$. 
    \begin{enumerate}
        \item The \deff{dimension} of $S$ is the dimension of $\aff(S)$. 
        \item A \deff{basis} for $S$ is a subset of $S$ that forms an affine basis for $\aff(S)$.
        \item The \deff{dependent} set in $S$ with respect to $B$ is $D = S \setminus B$.
    \end{enumerate}
\end{definition}

\bigskip

\begin{remark}
    Note that if $S$ is an $m$-dimensional subset with affine basis $B$, then $|B|=m+1$; i.e.~an affine basis has one more element than the dimension of the subset.
\end{remark}

We are primarily interested in the case where $S$ is a $7$-dimensional subset $\Z_2^7$ of size $k$ where  $k \geq 10$. In this case $|B|=8$, and so $|D| = k - 8 \geq 2$. 

\bigskip

\begin{example}
Let $S$ be a set of size $10$ with basis $B= \{a_1, \ldots, a_8\}$ and $D=\{x, y\}$. Then $x$ and $y$ can each be written as a sum of an odd number of basis elements. Since $x, y \notin B$ and $|B| = 8$, each must be the sum of either three, five, or seven basis elements. 
\end{example}

\bigskip

\begin{definition}
    Let $S \subseteq \Z_2^n$ be a subset with basis $B$ and dependent set $D = \{x_1, \ldots x_r\}$.
    \begin{enumerate}
        \item For $1 \leq i \leq r$, the set $B_i$ is the unique set of basis elements that sum to $x_i$.
        \item For any $1 \leq i_j \leq r$, we define $B_{i_1 i_2 \cdots i_s}= B_{i_1} \cap B_{i_2} \cdots \cap B_{i_s}$.
        \item More generally, for any $x \in D$ we define $B_x$ to be the unique set of basis elements that sum to $x$. 
    \end{enumerate}
\end{definition}

\bigskip

Using the inclusion-exclusion principle, we can determine $|B_{i_1 i_2 \cdots i_s}|$ for any subset $\{i_1, i_2, \ldots, i_s\}$ of $\{1, \ldots, n\}$ with size $s$ if we know the cardinalities of the every $B_{i_1 \ldots i_s}$. Some of this numerical information is captured by the \emph{type} and \emph{extended type} of a set, defined below.

\bigskip

\begin{definition} 
    Let $S \subseteq \Z_2^n$ be a set with basis $B=\{b_1, \ldots, b_m\}$ and dependent set $D = \{x_1, \ldots, x_r\}$. We say $B$ has \deff{type}
    \[
        |B_1| \hyph |B_2| \hyph \cdots \hyph |B_m|,
    \]
    where the cardinalities are listed in non-increasing order, and \deff{extended type}
    \[
        |B_1| \hyph |B_2| \hyph \cdots \hyph |B_r| \hyph (|B_{12}|, \ldots , |B_{1r}|, \ldots, |B_{(r-1)r}|),
    \]
    where the pairs $\{i,j\}$ are in lexicographic order. (This notation uses hyphenated numbers, not subtraction.)
\end{definition}

\bigskip

\begin{example} \label{type-example}
    Let $S$ be a set with basis $B= \{a_1, \ldots, a_n\}$ and $D=\{x_1, x_2\}$. Suppose $x_1 = a_1 + \cdots + a_5$ and $x_2 = a_3 + \cdots + a_7$. Then $B_1 = \{a_1, \ldots, a_5\}$, $B_2 = \{a_3, \ldots, a_7\}$, and $B_{12} = \{a_3, a_4, a_5\}$.
    Then $S$ with basis $B$ has type $5 \hyph 5$ and extended type $5 \hyph 5 \hyph (3)$, since $D$ contains two elements, $x_1$ and $x_2$, each of which are sums of five basis elements, and these sums have three elements in common.
\end{example}

\bigskip

\begin{remark} \label{rem:affine-equivalence}
    Note that if $S$ and $S'$ are subsets that are affinely equivalent via a function $f$, then for any basis $B$ for $S$, the image $f(B)$ is a basis for $S'$ and these two bases have the same extended type.
    Note that $S'$ may have a basis $B'$ of a different type from $B$, but in this case $f(B)$ cannot equal $B'$.
\end{remark}

\bigskip

\begin{example} \label{template-example}
    Suppose $S_1$ and $S_2$ are sets with bases $B_1 = \{a_1, \ldots, a_n\}$ and $B_2 = \{b_1, \ldots, b_n\}$, respectively, and dependent sets $D_1 = \{x_1, x_2\}$ and $D_2 = \{y_1, y_2\}$, respectively.
    Suppose further that
    \begin{align*}
        x_1 &= a_1 + \cdots + a_5, \\
        y_1 &= b_1 + \cdots + b_5
    \end{align*}
    and
    \begin{align*}
        x_2 &= a_3 + \cdots + a_7, \\
        y_2 &= b_3 + \cdots + b_7.
    \end{align*}
    Then we can build an affine isomorphism $f$ between $\aff(S_1) = \aff(B_1)$ and $\aff(S_2) = \aff(B_2)$ by setting $f(a_i) = b_i$ for each $a_i \in B_1$, and extending affine linearly. The function $f$ must then preserve affine combinations, so
    \begin{align*}
        f(x_1)
        &= f(a_1 + \cdots + a_5) \\
        &= f(a_1) + \cdots + f(a_5) \\
        &= b_1 + \cdots + b_5 \\
        &= y_1,
    \end{align*}
    and similarly $f(x_2) = y_2$. Hence $S_1$ and $S_2$ are affinely equivalent via $f$.
\end{example}

The situation described in Example~\ref{template-example} will occur multiple times in this paper. We will often prove that, for a set with a basis of a certain type, we can always label the points so that the dependent elements have particular expressions as sums of basis elements. We will call this a \deff{dependent set template} with respect to the basis. By an argument like that in the example above, we have the following lemma.

\bigskip

\begin{lemma} \label{same-template}
    If two subsets of $\Z_2^n$ have the same size and dimension, and have bases such that their dependent sets fit the same template, then the sets are affinely equivalent.
\end{lemma}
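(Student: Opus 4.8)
The plan is to generalize verbatim the construction in Example~\ref{template-example}. Let $S_1, S_2 \subseteq \Z_2^n$ have the same size and the same dimension $m$, with affine bases $B_1 = \{a_1, \ldots, a_{m+1}\}$ and $B_2 = \{b_1, \ldots, b_{m+1}\}$ and dependent sets $D_1, D_2$; since $|S_1| = |S_2|$ and $|B_1| = |B_2| = m+1$, we have $|D_1| = |D_2| =: r$. The hypothesis that the dependent sets fit the same template means we may index $D_1 = \{x_1, \ldots, x_r\}$ and $D_2 = \{y_1, \ldots, y_r\}$ so that for each $j$ there is a common index set $I_j \subseteq \{1, \ldots, m+1\}$ with $x_j = \sum_{i \in I_j} a_i$ and $y_j = \sum_{i \in I_j} b_i$ (and each $|I_j|$ is odd, being the size of an affine-combination expression).

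First I would construct the affine isomorphism. Since $B_1$ is an affine basis for the flat $\aff(S_1)$, the assignment $f(a_i) = b_i$ extends uniquely to an affine map $f : \aff(S_1) \to \aff(S_2)$; because $B_2$ is an affine basis for $\aff(S_2)$, which has the same dimension $m$, this $f$ is a bijection, hence an affine isomorphism. Concretely, one passes to linear coordinates: write $\aff(S_1) = a_{m+1} + W_1$ with $W_1 = \operatorname{span}\{a_i - a_{m+1} : i \leq m\}$ and likewise $\aff(S_2) = b_{m+1} + W_2$; the linear map sending $a_i - a_{m+1} \mapsto b_i - b_{m+1}$ is a vector-space isomorphism $W_1 \to W_2$, and $f(v) := b_{m+1} + L(v - a_{m+1})$ is the desired affine isomorphism.

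Next I would check $f(D_1) = D_2$ with matching labels. An affine map commutes with affine combinations, and by Lemma~\ref{lem:odd-sum} an affine combination in $\Z_2^n$ is exactly a sum of an odd number of elements; since each $|I_j|$ is odd,
\[
    f(x_j) = f\Bigl(\sum_{i \in I_j} a_i\Bigr) = \sum_{i \in I_j} f(a_i) = \sum_{i \in I_j} b_i = y_j .
\]
Hence $f(S_1) = f(B_1) \cup f(D_1) = B_2 \cup D_2 = S_2$, so $S_1$ and $S_2$ are affinely equivalent. If an affine isomorphism of all of $\Z_2^n$ is wanted rather than one between the two flats, extend $B_1$ and $B_2$ to affine bases of $\Z_2^n$ and send the extra basis points to each other bijectively; the displayed computation is unchanged.

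The only genuine subtlety, and the step I would be most careful to state precisely, is the well-definedness and invertibility of $f$: one needs the affine analogue of the standard linear fact that a map may be freely prescribed on a basis and is then determined, together with the fact that carrying one affine basis of an $m$-flat to another affine basis of an $m$-flat produces a bijection. Both follow from the coordinate description above. Everything after that is the routine ``affine maps preserve odd sums'' computation already illustrated in Example~\ref{template-example}, so no further obstacle is expected.
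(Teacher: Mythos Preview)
Your proposal is correct and follows essentially the same approach as the paper: the paper does not give a separate proof of this lemma but instead remarks that it follows by the argument of Example~\ref{template-example}, which is precisely the construction you have carefully written out and generalized. Your added discussion of well-definedness and invertibility of the affine extension, and of extending to all of $\Z_2^n$, is more detailed than what the paper provides but entirely in the same spirit.
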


\bigskip

\begin{remark}
    A surprising consequence of the results in this paper is that, for $7$-dimensional caps in $\Z_2^n$, two caps are affinely equivalent if and only if they possess bases of the same extended type.
    This is not true, however, for higher-dimensional caps, as seen in Example~\ref{higherdim}.
\end{remark}

\bigskip

\begin{example} \label{higherdim}
    Let $C = B \cup D$ where $B=\{a_1, \ldots a_9\}$ is an independent set and $D = \{x, y, z\}$, and
    \begin{align*}
        x &= a_1 + \cdots + a_5, \\
        y &= a_1 + a_2 + a_3 + a_6 + a_7, \\
        z &= a_1 + a_2 + a_3 + a_8 + a_9,
    \end{align*}
    Let $C'= B \cup D'$ where $D' = \{u, v, w\}$ and
    \begin{align*}
        u &= a_1 + \cdots + a_5, \\
        v &= a_1 + a_2 + a_3 + a_6 + a_7, \\
        w &= a_1 + a_2 + a_4 + a_6 + a_8.
    \end{align*}
    
    Both $C$ and $C'$ of these have $B$ of extended type $5 \hyph 5 \hyph 5 \hyph (3,3,3)$, but $C$ is not equivalent to $C'$ for the following reason. In $C$, all basis elements appear in at least one dependence relation. By Remark~\ref{preserved-dependencies}, this property must be preserved by affine equivalence, but in $C'$, the basis element $a_9$ does not appear in any dependence relations.
\end{example}

If $B$ and $B'$ are bases for the same set, they may have different types, but it is always the case that $|B'|=|B|$ and $|D'|=|D|$. We will use the next theorem throughout this paper to change from a basis of one type to another basis of a different type. 

\bigskip

\begin{theorem}[Affine Basis Exchange Theorem] \label{newcapbasis}
    Let $S \subseteq \Z_2^n$ be a set with basis $B$ and dependent set $D =\{x_1, x_2, \ldots, x_r\}$.
    Suppose that there exists $a  \in (B_1 \cap B_2) \setminus (\cup_{i=3}^r B_i)$. 
    Let $B' = (B \setminus \{a\}) \cup \{x_2\}$ and $D' = (D \setminus \{ x_2 \}) \cup \{a \}$. Then:
    \begin{enumerate}
        \item $B'$ is a basis for $S$ with dependent set $D'$.
        \item $B'_{1} = (B_{1} \symdif B_{2}) \cup \{x_2\}$, where $\symdif$ denotes the symmetric difference operator.
        \item $B'_{a}$ = $B_2 \setminus \{a\}) \cup \{x_2\}$.
        \item $B'_{i} = B_{i}$ for $i>2$.
    \end{enumerate}
\end{theorem}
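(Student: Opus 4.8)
The plan is to verify each of the four claims by direct computation in $\Z_2^n$, using only the definitions of affine span, basis, and the sets $B_x$. The key observation that drives everything is the following rewriting trick: since $x_2 = \sum_{b \in B_2} b$ and $a \in B_2$, we can solve for $a$ to get $a = x_2 + \sum_{b \in B_2 \setminus \{a\}} b$, an expression of $a$ as a sum of $|B_2|$ elements of $B'$. Note $|B_2|$ is odd, so this is a legitimate affine combination, which will be needed to identify $B'_a$.

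First I would establish (1), that $B'$ is a basis for $S$ with dependent set $D'$. Since $|B'| = |B|$, it suffices to show $B'$ is affinely independent and that $\aff(B') \supseteq S$; equivalently, by Lemma~\ref{lem:odd-sum} and the characterization of affine span as the closure under odd sums, it suffices to show $a \in \aff(B')$ and $x_2 \in \aff(B)$ (the latter is immediate) — then $\aff(B') = \aff(B) = \aff(S)$, and since $|B'| = \dim(S)+1$, $B'$ is automatically a basis. The expression $a = x_2 + \sum_{b \in B_2 \setminus \{a\}} b$ from the rewriting trick gives $a \in \aff(B')$. That $D' = S \setminus B'$ is then just bookkeeping: $D' = (D \setminus \{x_2\}) \cup \{a\}$ by construction.

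Next I would compute the sets $B'_y$ for each $y \in D'$, i.e., find the unique subset of $B'$ summing to $y$; uniqueness is guaranteed by affine independence of $B'$. For claim (3), $B'_a$: the rewriting trick already exhibits $a$ as a sum of the elements of $(B_2 \setminus \{a\}) \cup \{x_2\}$, all of which lie in $B'$ since $a \notin B_2 \setminus \{a\}$ and $x_2 \in B'$; by uniqueness this is $B'_a$. For claim (2), $B'_{1}$ (the subset of $B'$ summing to $x_1$): start from $x_1 = \sum_{b \in B_1} b$, and use the hypothesis $a \in B_1 \cap B_2$ to replace the single element $a$ by $x_2 + \sum_{b \in B_2 \setminus \{a\}} b$; the resulting sum is over $(B_1 \setminus \{a\})$ together with $(B_2 \setminus \{a\})$ with cancellation of common elements, i.e., over the symmetric difference $B_1 \symdif B_2$ (with $a$ removed from both, but $a$ is in both so it is already gone from the symmetric difference), plus the new element $x_2$ — giving exactly $(B_1 \symdif B_2) \cup \{x_2\}$. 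For claim (4), $B'_i = B_i$ for $i > 2$: since $a \notin B_i$ by the hypothesis $a \notin \bigcup_{i \geq 3} B_i$, the set $B_i$ is already a subset of $B \setminus \{a\} \subseteq B'$, and it sums to $x_i$; by uniqueness in $B'$ it is $B'_i$.

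I expect the main subtlety — rather than a genuine obstacle — to be tracking the set-theoretic bookkeeping in claim (2) carefully: one must confirm that after substituting for $a$, the element $a$ does not reappear and that the symmetric difference $B_1 \symdif B_2$ correctly records which basis elements survive, and that $x_2$ is genuinely a new element not already among them (it is not in $B$ at all). There is also a parity point worth stating explicitly: each $B'_y$ as computed must have odd cardinality for the claimed expression to be an affine combination, and this follows because $B_1, B_2$ are odd so $|B_1 \symdif B_2| = |B_1| + |B_2| - 2|B_{12}|$ is even, hence $|(B_1 \symdif B_2) \cup \{x_2\}|$ is odd; similarly $|(B_2 \setminus \{a\}) \cup \{x_2\}| = |B_2|$ is odd. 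None of this is hard, but it is the place where a careless argument would slip.
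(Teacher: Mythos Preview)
Your proposal is correct and follows essentially the same approach as the paper: both hinge on the rewriting trick $a = x_2 + \sum_{b \in B_2 \setminus \{a\}} b$, derive $B'_1$ via the symmetric difference $B_1 \symdif B_2$, and handle $B'_i$ for $i>2$ by noting $a \notin B_i$. Your argument for (1) via a span-plus-dimension count is a slight variant of the paper's direct independence check, and your explicit parity verification is a nice addition the paper leaves implicit, but neither constitutes a genuinely different route.
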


\begin{proof} 
    Let $S$, $B$, $D$, $a$, $B'$, and $D'$ be as in the statement of the theorem.
    
    First note that $B'$ is also a basis for $S$ because the elements of $B \setminus \{a\}$ are independent by definition, and $x_2$ is independent of the elements of $B \setminus \{a\}$ since the only sum involving $x_2$ also contains $a$. 

    We now give an explicit construction of the elements of $D'$ in terms of elements of $B'$.
    First, since $a$ only appears in $B_1$ and $B_2$, each $x_i$ for $i > 2$ has the same sum of elements of $B'$ as it does for the elements of $B$. Thus, $B'_{i} = B_{i}$ for $i>2$. 
    Next, since $a \in B_{12}$, we have 
    \[
        x_2 = a + c_1 + \cdots + c_m
    \]
    where $\{c_1, \ldots, c_m\} = B_2 \setminus \{a\}$.
    Solving for $a$, we get
    \[
        a = x_2 + c_1 + \cdots + c_m.
    \]
    We have now written $a$ as a sum of elements in $B \setminus \{a\}$. We have essentially swapped the roles of $a$ and $x_2$, i.e.~$x_2 \in B'$ and $a$ is now a dependent element with respect to $B'$. Thus $B'_2 = (B_2 \setminus \{a\}) \cup \{x_2\}$. 

    Now, we cannot set $B'_1 = B_1$ since $a \notin B'$. Instead, we use the equation formed by adding $x_1$ and $x_2$. Since we are adding modulo $2$, the basis elements in $B_{12}$ cancel and so $x_1 + x_2$ is the sum of the basis elements in $B_1 \symdif B_2$, the symmetric difference of $B_1$ and $B_2$.  

    On the other hand, since $a \in B_{12}$ we have $a \notin B_1 \symdif B_2$. Thus, $a$ cannot appear as an addend in the sum $x_1 + x_2$. So we have that  
    \[
        x_1 + x_2 = s_1 + \cdots + s_{\ell},
    \]
    where $\{s_1, \ldots, s_{\ell}\} = B \symdif B'$. 
    Solving for $x_1$ we get
    \[
        x_1 = x_2 + s_1 + \ldots + s_{\ell}.
    \] 
    It follows that
    \[
        B'_{1} = (B_{1} \symdif B_{2}) \cup \{x_2\}.
    \]

    Finally, for $i>2$ the set $B_i$ does not contain $a$ or $x_2$, so it follows that the unique expression for $x_i$ in terms of the basis $B$ is the same as that for the basis $B'$, and hence $B'_i = B_i$.
\end{proof}

\bigskip

\begin{example} 
    Let $S= \{a_1, \ldots, a_8, x, y, z \}$ be a subset of $\Z_2^n$ with basis $B = \{a_1, \ldots, a_8\}$ and dependent set $D = \{x, y, z\}$, and suppose that
    \begin{align*}
        x &= a_1 + a_2 + a_3 + a_4 + a_5 + a_6 + a_7, \\
        y &= a_1 + a_2 + a_3+ a_4 + a_8, \\
        z &= a_1 + a_2 + a_5 + a_6 + a_8.
    \end{align*} 
    Then
    \[
        x + y = a_5 + a_6 + a_7 + a_8.
    \]
    Notice that $a_3 \in B_{xy} \setminus B_z$, so we can use the \emph{Affine Basis Exchange Theorem} to write $S = B' \cup D'$, where $B'= (B \setminus \{a_3\}) \cup \{y\}$ is a basis for $C$, and $D' = \{ a_3, x, z \}$.
    Rewriting the equations for $y$ and $x+y$, we get  
    \begin{align*}
        a_3 &= a_1 + a_2 + a_4 + a_8 + y,  \\ 
        x &= a_5 + a_6 + a_7 + a_8 + y.
    \end{align*} 
    Thus we've written each of $x$, $a_3$, and $z$ as sums of elements of $B'$.
    Note that $B$ has extended type $7 \hyph 5 \hyph 5 \hyph (4,4,3)$ but $B'$ has extended type $5 \hyph 5 \hyph 5 \hyph (2,3,3)$. 
\end{example}

\section[Quads and caps in AG(n,2)]{Quads and caps in $\AG(n,2)$}

The vector space $\Z_2^n$ is a model for the finite affine geometry $\AG(n,2)$.
We now recall the key notions of \deff{quad}, \deff{cap}, and associated terms from \cite{lamatpaper}.

\bigskip

\begin{definition} \leavevmode
    \begin{itemize} 
        \item A \deff{quad} in $\Z_2^n$ is a set of four elements that sum to zero, or equivalently an affine dependence relation among four elements.
        \item A \deff{cap} $C$ is a quad-free subset of $\Z_2^n$. 
        \item The \deff{first quad closure} of a subset $S \subseteq \Z_2^n$ is
        \[
            \qc_1(S) = S \cup \{x+y+z \mid \text{$x,y,z \in S$ are distinct}\}. 
        \]
        \item A cap $C$ is \deff{complete} if it is a maximal quad-free subset of $\aff(C)$, or equivalently if $ \qc_1(C) = \aff(C)$. 
        \item A cap $C$ is a \deff{$k$-cap} if $|C|=k$.
    \end{itemize}
\end{definition}

As we saw in the last section, a set $C$ can be partitioned, in a non-unique way, into a basis $B$ and a dependent set $D = C \setminus B$. Moreover, any element in $D$ can be written uniquely as the sum of an odd number of basis elements.  If $C$ is a cap, then no four elements can sum to zero, so no element of $C$ can be the sum of three basis elements. 

\bigskip

\begin{lemma} \label{morethan3}
    Let $C$ be a cap with basis $B$ and dependent set $D$. Then each element of $D$ can be written uniquely as a sum of an odd number of at least five elements of $B$.
\end{lemma}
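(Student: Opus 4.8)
The plan is to identify, for each $x \in D$, the unique odd-size subset $B_x \subseteq B$ with $\sum_{b \in B_x} b = x$, and then to rule out the possibilities $|B_x| = 1$ and $|B_x| = 3$. Recall from Lemma~\ref{lem:odd-sum} that, since $x \in D \subseteq \aff(B)$, the element $x$ is an affine combination of $B$ and hence a sum of an \emph{odd} number of elements of $B$; so $|B_x|$ is odd. Uniqueness of $B_x$ (already used implicitly in the definition of $B_i$) follows from the affine independence of $B$: if two odd-size subsets of $B$ had the same sum, their symmetric difference would be an even-size subset of $B$ summing to $\mathbf{0}$, which affine independence forces to be empty.

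It then remains to eliminate the small odd values. If $|B_x| = 1$, then $x$ equals a single element of $B$, contradicting $x \in D = C \setminus B$. If $|B_x| = 3$, write $B_x = \{b, b', b''\}$ with $b, b', b''$ distinct elements of $B$; then $x + b + b' + b'' = \mathbf{0}$, and the four elements $x, b, b', b''$ are pairwise distinct (the three basis elements are distinct by choice, and $x \notin B$). Hence $\{x, b, b', b''\}$ is a quad contained in $C$, contradicting the hypothesis that $C$ is a cap. Therefore $|B_x|$ is an odd integer with $|B_x| \geq 5$, which is the claim.

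There is no real obstacle here; the only step requiring any care is verifying that in the case $|B_x| = 3$ the four elements $x, b, b', b''$ are genuinely distinct, which is precisely what makes $\{x, b, b', b''\}$ a quad (a four-element zero-sum set) rather than a shorter dependence relation — and this is exactly where the assumption $x \notin B$ is used.
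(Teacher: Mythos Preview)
Your proof is correct and follows essentially the same argument as the paper: use Lemma~\ref{lem:odd-sum} to get an odd-size sum, rule out size one because $x \notin B$, and rule out size three because it would produce a quad $\{x,b,b',b''\}$ in the cap $C$. Your explicit justification of uniqueness via the symmetric-difference argument and your care in checking distinctness of the four elements are slight elaborations, but the approach is the same.
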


\begin{proof}
    Let $C, B, D$ be as in the lemma statement.
    Let $x \in D$. Since $B$ is a basis for $\aff(C)$, we know $x$ can be written uniquely as an affine combination of elements of $B$. By Lemma~\ref{lem:odd-sum}, this combination is a sum of an odd number of elements of $B$. This odd number cannot be one, since by assumption $x \notin B$. This odd number cannot be three either, for if $x = a + b + c$ for distinct $a,b,c \in B$, then $a + b + c + x = \vec{0}$, so $\{a,b,c,x\}$ is a quad in the cap $C$, a contradiction. Thus, the odd number must be at least five.
\end{proof}

Our main goal in this paper is to classify caps of dimension $7$ up to affine equivalence. We start by recalling some results from \cite{lamatpaper}. 

Let $C$ be a $6$-dimensional cap. By changing coordinates, we can view $C$ as a subset of $\Z_2^6$, which corresponds to a standard quads deck. From Theorems~6.3 and~6.6 of \cite{lamatpaper}, we deduce the following results.

\bigskip

\begin{proposition} \label{capsinz6}
    If $C$ is a $6$-dimensional cap, then $7 \leq |C| \leq 9$. Moreover:
    \begin{enumerate} 
        \item There is one equivalence class each of $7$-caps and $9$-caps of dimension $6$.
        \item There are two equivalence classes of $8$-caps of dimension $6$.
    \end{enumerate}    
\end{proposition}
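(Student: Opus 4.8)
The plan is to fix a basis $B$ of a $6$-dimensional cap $C$, so that $|B| = 7$ and the dependent set $D = C \setminus B$ satisfies, by Lemma~\ref{morethan3}, that every $x \in D$ equals $B_x$, a sum of either five or seven elements of $B$. The lower bound $|C| \ge 7$ is immediate, since $\aff(C)$ is $6$-dimensional and a basis drawn from $C$ already has $7$ elements.

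For the upper bound the crucial observation is that a sum of all seven basis elements is the single fixed vector $\sum_{b \in B} b$. Hence if $|D| \ge 2$ and some $x \in D$ had $|B_x| = 7$, then $B_x = B$, so for any other $y \in D$ the symmetric difference $B_x \symdif B_y$ equals $B \setminus B_y$ and $x + y$ is the sum of these $7 - |B_y|$ elements; since $x \ne y$ rules out $|B_y| = 7$, we get $|B_y| = 5$ and a quad $\{x, y, b, b'\}$ with $\{b, b'\} = B \setminus B_y$. So when $|D| \ge 2$ every element of $D$ is a sum of exactly five basis elements, and for distinct $x_i, x_j \in D$ we then have $|B_{ij}| \ge |B_i| + |B_j| - |B| = 3$, while $|B_{ij}| = 5$ forces $x_i = x_j$ and $|B_{ij}| = 4$ forces $|B_i \symdif B_j| = 2$, again a quad; hence $|B_{ij}| = 3$. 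Now suppose $|C| \ge 10$; passing to the cap $B \cup \{x_1, x_2, x_3\}$ for three elements of $D$, inclusion--exclusion gives $|B_1 \cup B_2 \cup B_3| = 15 - 9 + |B_{123}| \le 7$ and $|B_{123}| = |B_{12} \cap B_3| \ge 3 + 5 - 7 = 1$, so $|B_{123}| = 1$. Since $|B_i \cup B_j| = 7$ the three $2$-element sets $B \setminus B_i$ are pairwise disjoint, hence partition $B \setminus B_{123}$; relabeling gives $B_1 = \{a_1, \ldots, a_5\}$, $B_2 = \{a_1, a_2, a_3, a_6, a_7\}$, $B_3 = \{a_1, a_4, a_5, a_6, a_7\}$, for which $x_1 + x_2 + x_3 = a_1$, so $\{a_1, x_1, x_2, x_3\}$ is a quad --- a contradiction. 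This proves $7 \le |C| \le 9$.

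For the classification I would exhibit a dependent set template in each case and apply Lemma~\ref{same-template}. If $|C| = 7$ then $C = B$ is an affine basis, and all $6$-dimensional affine bases are affinely equivalent. If $|C| = 9$ then $|D| = 2$ and the analysis above shows $|B_1| = |B_2| = 5$ with $|B_{12}| = 3$ for every basis; relabeling so that $B_{12} = \{a_3, a_4, a_5\}$, $B_1 \setminus B_2 = \{a_1, a_2\}$, and $B_2 \setminus B_1 = \{a_6, a_7\}$ puts $C$ in exactly the template $x_1 = a_1 + \cdots + a_5$, $x_2 = a_3 + \cdots + a_7$ of Example~\ref{template-example}, which one checks is quad-free; so there is a single class. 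If $|C| = 8$ then $|D| = 1$ with the dependent element a sum of five or of seven basis elements, giving the two quad-free templates $x = a_1 + \cdots + a_5$ and $x = a_1 + \cdots + a_7$, each of which is a single class by Lemma~\ref{same-template}.

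The remaining point --- and the step I expect to be the main obstacle --- is that these two $8$-cap templates have the same size and dimension, so to separate them I would use the parity invariant $\sigma(C) = \sum_{c \in C} c$: for an affine map $T(x) = L(x) + b$ and a set of even size one has $\sum_c T(c) = L(\sigma(C)) + |C|\,b = L(\sigma(C))$, so for $8$-element sets the condition $\sigma(C) = \vec{0}$ is an affine invariant. The sum-of-seven template gives $\sigma(C) = \sum_{b \in B} b + \sum_{b \in B} b = \vec{0}$, whereas the sum-of-five template gives $\sigma(C) = \sum_{b \in B} b + \sum_{b \in B_x} b$, the nonzero sum of the two basis elements outside $B_x$; hence the two are inequivalent and there are exactly two classes of $8$-caps. (As the authors indicate, all of this can alternatively be deduced from Theorems~6.3 and~6.6 of \cite{lamatpaper}.) Apart from ruling out $|D| = 3$ via the inclusion--exclusion/quad argument and this parity invariant, every step is routine template-matching.
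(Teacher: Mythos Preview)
Your proof is correct. The paper itself does not give a proof of this proposition: it simply cites Theorems~6.3 and~6.6 of \cite{lamatpaper} and states the result. You instead supply a self-contained argument using exactly the machinery (basis/dependent set decomposition, Lemma~\ref{morethan3}, inclusion--exclusion on the $B_i$, and the template method of Lemma~\ref{same-template}) that the paper develops for its dimension-$7$ analysis. This has the advantage of making the paper internally complete and of previewing, in a simpler setting, the techniques used in Sections~5--8. Your parity invariant $\sigma(C) = \sum_{c \in C} c$ to separate the two $8$-cap classes is a clean touch not present in the paper's toolkit; the alternative the paper would suggest (via Remark~\ref{preserved-dependencies}) is to note that in the sum-of-seven template every cap element participates in the unique dependence relation, whereas in the sum-of-five template two basis elements are uninvolved, and this too is an affine invariant.
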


\bigskip

\begin{remark} \label{atmost2inD}
 Since a $6$-dimensional cap can contain only $7$, $8$, or $9$ elements, and a basis for it has size $7$, then $|D| \leq 2$.  
\end{remark}

\bigskip

\begin{proposition}[Theorem 6.1, \cite{lamatpaper}] \label{smallD-caps}
    Let $C$ be an $m$-dimensional $k$-cap in $\Z_2^n$.
    \begin{enumerate}
        \item If $k=m+1$, then  $C$ is affinely independent. All caps of this form comprise a single equivalence class.
        \item If $k=m$, then $C = B \cup {x}$, where $B$ is independent, and $x$ is the sum of an odd number $r$ of elements of $B$. There is an equivalence class of caps of this form for each odd number $5 \leq r \leq m+1$.
    \end{enumerate}
\end{proposition}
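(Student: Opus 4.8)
\emph{Part (1).}
This is purely a dimension count. Since $\aff(C)$ has dimension $m$, every affine basis of it has exactly $m+1$ elements, and since $C$ spans $\aff(C)$ it must contain such a basis $B$; as $|B| = m+1 = |C|$ we conclude $C = B$, so $C$ is affinely independent. An affinely independent set of size $m+1$ is an affine basis of its ($m$-dimensional) span and has empty dependent set, so Lemma~\ref{same-template} (applied with an empty dependent set template), or directly the construction in Example~\ref{template-example}, shows that any two such sets are affinely equivalent; in particular each is equivalent to the standard affine basis $\{\mathbf{0}, e_1, \dots, e_m\}$ of $\Z_2^m$, which also shows that such a cap exists.

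\emph{Part (2).}
Write $C = B \cup \{x\}$ with $B$ affinely independent and $x$ a sum of an odd number of elements of $B$; then $x \in \aff(B)$, so $B$ is a basis of $\aff(C)$ with dependent set $D = \{x\}$ and $\dim(\aff(C)) = |B| - 1$. By Lemma~\ref{morethan3}, $x = \sum_{b \in B_x} b$ with $r := |B_x|$ odd and $r \ge 5$, while trivially $r \le |B| = m+1$, so the stated range of $r$ is forced. It then suffices to prove: (a) $r$ is an affine invariant of $C$ (in particular, independent of the choice of basis); (b) two caps of this shape with the same $r$ are affinely equivalent; and (c) for each admissible odd $r$ a cap of this shape exists. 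These together yield exactly one affine equivalence class for each odd $r$ with $5 \le r \le m+1$.

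For (a), let $B'$ be any basis of $C$, with dependent element $x'$. Either $x' = x$, so $B' = C \setminus \{x\} = B$; or $x' \in B$ and $B' = (B \setminus \{x'\}) \cup \{x\}$, in which case, since $B \setminus \{x'\}$ is affinely independent of dimension $|B| - 2$, the set $B'$ can span $\aff(C)$ only if $x' \in B_x$, and then $x' = x + \sum_{b \in B_x \setminus \{x'\}} b$ exhibits $x'$ as a sum of $1 + (r - 1) = r$ elements of $B'$, so $|B'_{x'}| = r$ once more (equivalently, the unique affine dependence relation among the elements of $C$ involves $r+1$ of them). Consequently, if $f \colon C_1 \to C_2$ is an affine equivalence, then $f(B_1)$ is a basis of $C_2$ by Remark~\ref{rem:affine-equivalence}, and by Remark~\ref{preserved-dependencies} the relation $x_1 = \sum_{b \in B_{x_1}} b$ is carried to one writing the dependent element of $C_2$ with respect to $f(B_1)$ as a sum of $r$ elements; since that number is intrinsic to $C_2$, the caps $C_1$ and $C_2$ share the same value of $r$, so distinct values of $r$ give inequivalent caps. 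For (b), after relabelling the chosen bases we may assume $x_1 = a_1 + \cdots + a_r$ and $x_2 = b_1 + \cdots + b_r$, so the dependent sets fit a common template and Lemma~\ref{same-template} gives $C_1 \cong C_2$. For (c), in $\Z_2^m$ take the affine basis $B = \{\mathbf{0}, e_1, \dots, e_m\}$ and set $x = e_1 + \cdots + e_r$ when $r \le m$, or $x = e_1 + \cdots + e_m$ when $r = m+1$; in either case $x$ has Hamming weight at least $r - 1 \ge 4$, so $x \notin B$ and $x$ is not a sum of three elements of $B$, whence $B \cup \{x\}$ is a cap of dimension $m$ whose unique dependent element is a sum of $r$ basis elements.

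\emph{Main obstacle.}
The proposition is largely routine: Part (1) is a dimension count, and in Part (2) steps (b) and (c) reduce to the template lemma and an explicit construction. The step with real content is (a) — verifying that $r$ is intrinsic to the cap, not merely to a chosen basis, and is preserved by affine maps. The only other point requiring a moment of care is the endpoint $r = m+1$ in (c), where $\Z_2^m$ contains no vector of weight $m+1$ and one must instead include $\mathbf{0}$ in the sum defining $x$.
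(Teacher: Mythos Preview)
The paper does not prove this proposition: it is quoted from \cite{lamatpaper} and stated without argument, so there is no ``paper's own proof'' to compare against. Your proof is correct and self-contained. A few minor remarks. First, the statement as printed has a typo --- Part~(2) should read $k = m+2$, not $k = m$ (an $m$-dimensional set must have at least $m+1$ points); you correctly interpreted and proved the intended case $|D| = 1$. Second, in step~(c) you verify that $x$ is not a sum of three basis elements, but the sentence ``whence $B \cup \{x\}$ is a cap'' also relies on $B$ itself containing no quad; this is immediate from affine independence of $B$, but you might say so explicitly. Otherwise the argument --- the dimension count in Part~(1), the basis-change computation showing $r$ is intrinsic in step~(a), and the template/construction in steps~(b)--(c) --- is clean and complete.
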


\section[Caps of dimension 7]{Caps of dimension $7$}

We now prove some general properties of $7$-dimensional caps, which will be used throughout the paper. 

For the following lemmas, let $C$ be a $7$-dimensional cap in $\Z_2^n$ with basis $B$ and dependent set $D = C \setminus B$. 
Note that by Lemma~\ref{morethan3}, we know that each element of $D$ can be written uniquely as the sum of exactly five or seven basis elements.

First, we show that two dependent elements cannot both be sums of seven basis elements.

\bigskip

\begin{lemma} \label{oneseven}
    At most one point in $D$ can be a sum of seven basis elements.
\end{lemma}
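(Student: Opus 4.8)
The plan is to argue by contradiction, exploiting the cancellation-under-$\symdif$ phenomenon that underlies the Affine Basis Exchange Theorem. Suppose, for the sake of contradiction, that two distinct points $x_1, x_2 \in D$ are each a sum of seven basis elements. Since $|B| = 8$ (as $\dim(C) = 7$), writing $x_i$ as a sum of seven basis elements means that exactly one basis element is left out; that is, $B_1 = B \setminus \{a\}$ and $B_2 = B \setminus \{b\}$ for some $a, b \in B$.

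First I would handle the degenerate case $a = b$: then $B_1 = B_2$, and since each element of $D$ has a \emph{unique} expression as a sum of basis elements (Lemma~\ref{morethan3}), this forces $x_1 = x_2$, contradicting distinctness. So we may assume $a \neq b$.

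Next I would add the two affine dependence relations $x_1 = \sum_{c \in B_1} c$ and $x_2 = \sum_{c \in B_2} c$. Working modulo $2$, the basis elements lying in $B_1 \cap B_2$ cancel, and since $B_1 \symdif B_2 = \{a, b\}$, we are left with $x_1 + x_2 = a + b$, i.e.\ $x_1 + x_2 + a + b = \mathbf{0}$. Because $x_1 \neq x_2$, $a \neq b$, and $D \cap B = \emptyset$, the four elements $x_1, x_2, a, b$ are pairwise distinct, so $\{x_1, x_2, a, b\}$ is a quad contained in $C$, contradicting the hypothesis that $C$ is a cap. This completes the argument.

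There is essentially no technical obstacle here: the only idea needed is the observation that two seven-element sums (out of a basis of size $8$) differ in exactly two basis elements, which instantly yields a four-element affine dependence. The mild point to be careful about is verifying that all four elements of the resulting quad are genuinely distinct, which follows from $a \neq b$, $x_1 \neq x_2$, and the disjointness of $B$ and $D$. I expect this same symmetric-difference bookkeeping (or a direct appeal to Theorem~\ref{newcapbasis}) to recur in the subsequent classification, where the single basis element omitted by a seven-sum must be tracked against the other dependence relations.
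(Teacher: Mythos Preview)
Your proof is correct and follows essentially the same approach as the paper: both argue that two distinct seven-element sums from an eight-element basis must differ in exactly two basis elements, whence their sum yields a four-element affine dependence (a quad), contradicting that $C$ is a cap. You are slightly more explicit than the paper in separately dispatching the degenerate case $a = b$ and in verifying the pairwise distinctness of $x_1, x_2, a, b$, but the core idea is identical.
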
 

\begin{proof} 
    Suppose $x, y \in D$ are both sums of seven basis elements. Then we can write $B = \{a_1, \ldots, a_8\}$ such that
    \begin{align*}
        x &= a_1 + \cdots + a_7, \\
        y &= a_2 + \cdots + a_8.
    \end{align*}
    Then $x + y = a_1 + a_8$, which implies that $C$ contains a quad, a contradiction. Hence $x$ and $y$ cannot both be sums of 7 basis elements.
\end{proof} 

The next lemma shows that two sums of five must share either two or three basis elements.

\bigskip

\begin{lemma} \label{twoorthree}
    Let $x_1, x_2 \in D$ be sums of five basis elements. Then $|B_{12}|$ equals $2$ or $3$.
\end{lemma}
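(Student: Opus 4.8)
The plan is a short counting argument using only two facts: that $|B| = 8$ since $C$ is $7$-dimensional, and that $C$ is quad-free. Write $B_1, B_2$ for the (size-$5$) sets of basis elements summing to $x_1$ and $x_2$ respectively, and recall that $x_1 + x_2$ is exactly the sum of the elements of $B_1 \symdif B_2$, with $|B_1 \symdif B_2| = |B_1| + |B_2| - 2|B_{12}| = 10 - 2|B_{12}|$.

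First I would establish the \emph{lower bound} $|B_{12}| \geq 2$ by inclusion--exclusion: since $B_1 \cup B_2 \subseteq B$, we have $10 - |B_{12}| = |B_1| + |B_2| - |B_{12}| = |B_1 \cup B_2| \leq |B| = 8$, forcing $|B_{12}| \geq 2$. (This already rules out the cases $|B_{12}| = 0$ and $|B_{12}| = 1$, which would require $B_1$ and $B_2$ to span $10$ or $9$ distinct basis elements.)

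Next I would establish the \emph{upper bound} $|B_{12}| \leq 3$. Since the expression of a dependent element as a sum of basis elements is unique, $B_1 = B_2$ would give $x_1 = x_2$, a contradiction; hence $|B_{12}| \leq 4$. If $|B_{12}| = 4$, then $|B_1 \symdif B_2| = 10 - 8 = 2$, so $x_1 + x_2 = a + b$ for two distinct basis elements $a, b$, i.e. $x_1 + x_2 + a + b = \vec{0}$, so $\{x_1, x_2, a, b\}$ is a quad in $C$ --- contradicting that $C$ is a cap. Therefore $|B_{12}| \leq 3$, and combining with the lower bound gives $|B_{12}| \in \{2, 3\}$.

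There is no real obstacle here; the only point requiring the cap hypothesis (rather than pure counting) is the exclusion of $|B_{12}| = 4$, and that is immediate once one observes that $|B_1 \symdif B_2| = 2$ in that case. The argument is entirely parallel in spirit to the proof of Lemma~\ref{oneseven}.
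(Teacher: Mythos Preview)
Your proof is correct and essentially identical to the paper's own argument: both use inclusion--exclusion to get $|B_{12}| \geq 2$, rule out $|B_{12}| = 5$ via $x_1 \neq x_2$, and rule out $|B_{12}| = 4$ by observing that then $x_1 + x_2$ equals a sum of two basis elements, yielding a quad. The only cosmetic difference is that you phrase the last step via the symmetric-difference count $|B_1 \symdif B_2| = 10 - 2|B_{12}|$, while the paper writes out explicit coordinates.
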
 

\begin{proof}
    By the inclusion-exclusion principle we have
    \[
        8
        = |B|
        \geq |B_1 \cup B_2|
        = |B_1| + |B_2| - |B_{12}|
        = 10 - |B_{12}|,
    \]
    so $|B_{12}| \geq 10 - 8 = 2$.

    Now suppose that $|B_{12}| = 4$.
    Then we can write $B = \{ a_1, \ldots, a_8\}$ such that
    \begin{align*}
        x_1 &= a_1 + \ldots + a_5, \\
        x_2 &= a_1 + \ldots + a_4 + a_6,
    \end{align*}
    so $B_{12} = \{a_1, \ldots, a_4\}$.
    Then $x_1 + x_2 = a_5 + a_6$ which implies that $C$ contains a quad, contradicting the fact that $C$ is a cap. Hence $|B_{12}| \neq 4$.
    
    Finally, since $x_1 \neq x_2$ we have $B_1 \neq B_2$, so $|B_{12}| \neq 5$.
    
    It follows that $|B_{12}|$ equals $2$ or $3$.
\end{proof}

The following lemma will be useful for constructing caps throughout the rest of the paper. 

\bigskip

\begin{lemma} \label{sumiseight}
    Let $x_1$, $x_2$, and $x_3$ be distinct elements of $D$. Then $|B_1 \cup B_2 \cup B_3| = 8$.
\end{lemma}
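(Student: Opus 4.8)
The plan is to argue by contradiction, exploiting the structural constraints already established for $7$-dimensional caps. Since $B_1 \cup B_2 \cup B_3 \subseteq B$ and $|B| = 8$ (because $\dim C = 7$), we automatically have $|B_1 \cup B_2 \cup B_3| \leq 8$, so it suffices to rule out $|B_1 \cup B_2 \cup B_3| \leq 7$. Equivalently, I would assume for contradiction that some basis element $a$ satisfies $a \notin B_1 \cup B_2 \cup B_3$. By Lemma~\ref{morethan3} together with $|B| = 8$, each $|B_i|$ equals $5$ or $7$, and by Lemma~\ref{oneseven} at most one of the three equals $7$. The argument then splits according to whether some $|B_i|$ equals $7$.

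In the case where, say, $|B_1| = 7$, the inclusion $B_1 \subseteq B_1 \cup B_2 \cup B_3$ forces $|B_1 \cup B_2 \cup B_3| \geq 7$, hence $|B_1 \cup B_2 \cup B_3| = 7$ and $B_1 = B_1 \cup B_2 \cup B_3 \supseteq B_2$. Then $x_1 + x_2$ is the sum of the basis elements lying in $B_1 \symdif B_2 = B_1 \setminus B_2$, a set of size $7 - 5 = 2$, say $\{b, b'\}$. Thus $x_1 + x_2 + b + b' = \vec{0}$, so $\{x_1, x_2, b, b'\}$ is a quad in $C$ --- a contradiction, since these four elements are distinct ($x_1, x_2 \in D$ and $b, b' \in B$).

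In the remaining case, we have $|B_1| = |B_2| = |B_3| = 5$. By Lemma~\ref{twoorthree} each $|B_{ij}|$ is $2$ or $3$, equivalently $|B \setminus (B_i \cup B_j)| = |B_{ij}| - 2 \leq 1$. Since $a$ lies in each of the three-element sets $B \setminus B_1$, $B \setminus B_2$, $B \setminus B_3$, and any two of these meet in at most one element, they must meet pairwise exactly in $\{a\}$; hence the three two-element sets $(B \setminus B_i) \setminus \{a\}$ are pairwise disjoint. These account for $1 + 2 + 2 + 2 = 7$ basis elements, so there is a unique remaining basis element $v$, necessarily lying in all of $B_1$, $B_2$, $B_3$. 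A short parity count then shows $v$ is the only basis element occurring in an odd number of the $B_i$ ($a$ occurs in none; each element of $(B \setminus B_i) \setminus \{a\}$ occurs in exactly the two $B_j$ with $j \neq i$; and $v$ occurs in all three), so $x_1 + x_2 + x_3 = v$. Then $\{x_1, x_2, x_3, v\}$ is a quad in $C$, again a contradiction.

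I expect the main obstacle to be the bookkeeping in the second case: deducing the intersection pattern of $B_1$, $B_2$, $B_3$ from the bound in Lemma~\ref{twoorthree}, and then performing the parity count that identifies $x_1 + x_2 + x_3$ with the single leftover basis element $v$. An alternative is a pure inclusion--exclusion count using quantities $n_0, n_1, n_2, n_3$ that record how many basis elements lie in exactly $0, 1, 2, 3$ of the $B_i$, and solving the resulting linear relations under the constraints of Lemma~\ref{twoorthree}; this works too, but the structural description above seems cleaner. In either approach one must remember to verify that the four elements of each exhibited quad are genuinely distinct, which is immediate here since the $x_i$ are distinct members of $D$ while the remaining element(s) lie in $B$.
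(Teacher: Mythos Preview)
Your proof is correct but takes a genuinely different route from the paper's. The paper's argument is a one-liner: if $|B_1 \cup B_2 \cup B_3| \leq 7$, then $B' = B_1 \cup B_2 \cup B_3$ together with $\{x_1,x_2,x_3\}$ forms a cap of dimension at most $6$, and the classification of $6$-dimensional caps (Proposition~\ref{capsinz6}, imported from \cite{lamatpaper}) forces $|D'| \leq 2$, contradicting $|D'| = 3$. In other words, the paper outsources the work to an already-known bound on cap sizes in lower dimension.

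Your argument is instead a direct, self-contained combinatorial case analysis using only Lemmas~\ref{morethan3}, \ref{oneseven}, and \ref{twoorthree}. In the $7 \hyph 5 \hyph 5$ case you exhibit an explicit quad $\{x_1,x_2,b,b'\}$; in the $5 \hyph 5 \hyph 5$ case you pin down the intersection pattern of the three complements $B \setminus B_i$ (pairwise meeting exactly in $\{a\}$), isolate the unique common element $v \in B_1 \cap B_2 \cap B_3$, and then a parity count gives $x_1 + x_2 + x_3 = v$. This is longer but has the advantage of not invoking the external $6$-dimensional classification; in effect you are reproving just the piece of Proposition~\ref{capsinz6} that is needed here. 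The paper's approach buys brevity; yours buys self-containment and makes the obstruction (an explicit quad) visible.
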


\begin{proof} 
    Suppose $|B_1 \cup B_2 \cup B_3| \leq 7$. Then the subcap $C'$ with basis $B' = B_1 \cup B_2 \cup B_3$ and dependent set $D' = \{x_1, x_2, x_3\}$ has dimension at most $6$. But by Proposition~\ref{capsinz6}, since $C'$ is a cap of dimension at most $6$, we have $|D'| \leq 2$. This is a contradiction since $|D'| = 3$. Hence it must be that $|B_1 \cup B_2 \cup B_3| = 8$.
\end{proof}

\bigskip

\begin{lemma} \label{sevenfivefour}
    Let $x_1, x_2 \in D$ with $|B_1| = 7$ and $|B_2|=5$. Then $|B_{12}| = 4$.
\end{lemma}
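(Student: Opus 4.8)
The plan is to mimic the structure of the proof of Lemma~\ref{twoorthree}: pin $|B_{12}|$ between two bounds and then eliminate the remaining bad value using the cap hypothesis. First I would apply the inclusion--exclusion principle to $B_1$ and $B_2$. Since $B_1 \cup B_2 \subseteq B$ and $|B| = 8$, we get
\[
    8 \geq |B_1 \cup B_2| = |B_1| + |B_2| - |B_{12}| = 7 + 5 - |B_{12}| = 12 - |B_{12}|,
\]
which forces $|B_{12}| \geq 4$. On the other hand, $B_{12} \subseteq B_2$, so $|B_{12}| \leq 5$.

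It remains to rule out $|B_{12}| = 5$. In that case $B_2 \subseteq B_1$, so after relabeling the basis we may assume $B_1 = \{a_1, \ldots, a_7\}$ and $B_2 = \{a_1, \ldots, a_5\}$, i.e.
\begin{align*}
    x_1 &= a_1 + \cdots + a_7, \\
    x_2 &= a_1 + \cdots + a_5.
\end{align*}
Adding these (modulo $2$) cancels $a_1, \ldots, a_5$ and leaves $x_1 + x_2 = a_6 + a_7$, so $x_1 + x_2 + a_6 + a_7 = \vec{0}$; that is, $\{x_1, x_2, a_6, a_7\}$ is a quad in $C$, contradicting that $C$ is a cap. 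Hence $|B_{12}| \neq 5$, and combining with $4 \leq |B_{12}| \leq 5$ we conclude $|B_{12}| = 4$.

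There is no real obstacle here: the only subtlety is making sure the two easy bounds leave exactly one surviving value, and the quad produced in the excluded case genuinely lies in $C$ (it does, since $x_1, x_2 \in D \subseteq C$ and $a_6, a_7 \in B \subseteq C$ are four distinct elements). Note that $|B_{12}| = 4$ itself is consistent with the cap condition, since then $x_1 + x_2$ is a sum of the four elements of $B_1 \symdif B_2$, giving a relation among six elements rather than a quad.
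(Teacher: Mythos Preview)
Your proof is correct and follows essentially the same approach as the paper: bound $|B_{12}|$ below via inclusion--exclusion, then eliminate $|B_{12}|=5$ by exhibiting the quad $\{x_1,x_2,a_6,a_7\}$. The only cosmetic difference is that you state the upper bound $|B_{12}|\le|B_2|=5$ explicitly, whereas the paper just calls $5$ ``the maximum possible.''
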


\begin{proof}
    By inclusion-exclusion, we have
    \[
        8
        = |B|
        \geq |B_1 \cup B_1|
        = |B_1| + |B_2| - |B_{12}|
        = 12 - |B_{12}|,
    \]
    so $|B_{12}| \geq 12 - 8 = 4$.

    Suppose $|B_{12}|=5$, the maximum possible.
    Then we can write $B = \{a_1, \ldots, a_8\}$ such that
    \begin{align*}
        x_1 &= a_1 + \ldots + a_7, \\
        x_2 &= a_1 + \ldots + a_5.
    \end{align*} 
    But then $x + y = a_6 + a_7$, which implies $C$ contains a quad, a contradiction. Thus $|B_{12}| = 4$. 
\end{proof}

\section[Caps of size 10]{Caps of size $10$}

We are now ready to begin our classification of $7$-dimensional caps into equivalence classes and types. Recall that a $7$-dimensional cap must contain a basis of size $8$, and hence must have cardinality at least $8$. Since that $8$-caps and $9$-caps in $\Z_2^7$ are characterized by Proposition~\ref{smallD-caps}, we assume that our caps have size $10$ or higher.

In this section, we prove that there are precisely two equivalence classes of $10$-caps in $\Z_2^7$. We do this by showing that any basis for such a cap must have one of three possible extended types, and constructing a dependent set template for each. These results are summarized in Table~\ref{dim7-10cap-templates}. (The constructions of the templates are in the proofs of Theorem~\ref{sevenisfive} and Corollary~\ref{size10twoclasses} below.)

\begin{table}[h]
    \centering
    \caption{Dependent set templates for $7$-dimensional $10$-caps with basis $\{a_1, \ldots, a_8\}$.}
    \label{dim7-10cap-templates}
    \begin{tabular}{|rl|}
        \hline
        \textbf{basis extended type} & \textbf{dependent set template} \\
        \hline
        $7 \hyph 5 \hyph (4)$
            & $x_1 = a_1 + \cdots + a_7$ \\
            & $x_2 = a_4 + \cdots + a_8$ \\
        \hline
        $5 \hyph 5 \hyph (2)$
            & $x_1 = a_1 + \cdots + a_5$ \\
            & $x_2 = a_4 + \cdots + a_8$ \\
        \hline
        $5 \hyph 5 \hyph (3)$
            & $x_1 = a_1 + \cdots + a_5$ \\
            & $x_2 = a_3 + \cdots + a_7$ \\
        \hline
    \end{tabular}
\end{table}


\bigskip

\begin{lemma}\label{no10in6}
    A $10$-cap in $\Z_2^n$ must have dimension at least $7$.
\end{lemma}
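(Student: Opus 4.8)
The plan is to show the contrapositive-flavored statement directly: a cap of dimension at most $6$ cannot have $10$ elements. Suppose for contradiction that $C$ is a $10$-cap in $\Z_2^n$ with $\dim(C) \leq 6$. After a change of coordinates we may assume $C \subseteq \Z_2^6$, so $C$ is a $6$-dimensional cap. But Proposition~\ref{capsinz6} tells us that every $6$-dimensional cap has size at most $9$, and $|C| = 10 > 9$, a contradiction. Hence $\dim(C) \geq 7$.

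There is one small gap to address: Proposition~\ref{capsinz6} is stated for caps that are \emph{exactly} $6$-dimensional, while the hypothesis only gives $\dim(C) \leq 6$. To bridge this, I would note that if $\dim(C) = d \leq 6$, then by changing coordinates $C$ embeds as a cap in $\Z_2^d$, and then trivially as a cap in $\Z_2^6$ (padding with zero coordinates). A cap of size $10$ in $\Z_2^6$ would then have dimension at most $6$; if its dimension were exactly $6$ we contradict Proposition~\ref{capsinz6}, and if its dimension were strictly less than $6$ we could repeat, eventually reaching a $6$-dimensional cap of size $\geq 10$ only if we had padded — so cleanest is: a cap of size $10$ has a basis of size $10$ if affinely independent, forcing dimension $9$; otherwise it has a basis $B$ with $|B| = \dim(C)+1 \leq 7$ and nonempty dependent set, and one invokes that any cap of dimension $\leq 6$ has at most $9$ elements. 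In fact the simplest route is: Remark~\ref{atmost2inD} says a $6$-dimensional cap has $|D| \leq 2$ hence size $\leq 9$; and any cap of dimension $d < 6$ sits inside a flat of dimension $6$, so it is also a cap "of dimension $\leq 6$" with the same size, and the size bound $9$ applies a fortiori since lower-dimensional caps are even smaller (e.g., by Proposition~\ref{smallD-caps} and Proposition~\ref{capsinz6} applied in the appropriate dimension).

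I do not expect any real obstacle here; the content of the lemma is essentially a restatement of Proposition~\ref{capsinz6} together with the observation that affine dimension can only be realized downward by embedding, never increased by restricting to a sub-coordinate-space. The only thing requiring a sentence of care is making sure the reader sees why "$\leq 6$" and not just "$= 6$" is handled. One clean phrasing: the dimension of a cap is intrinsic (it does not change under affine embeddings), so if $\dim(C) \leq 6$ then $C$ is affinely equivalent to a cap in $\Z_2^6$, and every cap in $\Z_2^6$ has size at most $9$ by Proposition~\ref{capsinz6} (for the $6$-dimensional case) and by Proposition~\ref{capsinz6} combined with the dimension-monotonicity of maximum cap size for lower dimensions. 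Since $|C| = 10$, this is impossible, completing the proof.
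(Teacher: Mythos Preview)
Your proposal is correct and takes essentially the same approach as the paper, which proves the lemma in a single sentence by citing Proposition~\ref{capsinz6}. Your extended discussion of the case $\dim(C) < 6$ is more careful than the paper bothers to be, but the content is the same; a single clause noting that maximum cap size is non-decreasing in dimension (or that the lower-dimensional cases are also covered by \cite{lamatpaper}) would suffice in place of your second and third paragraphs.
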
 

\begin{proof} 
    This follows directly from Proposition~\ref{capsinz6}.
\end{proof} 

\bigskip

\begin{remark} \label{types_for_7dim10cap}
Let $C$ be a $7$-dimensional $10$-cap with basis $B$ and dependent set $D$. Then $|B|=8$ and $|D|=10-8=2$. 

By Lemmas~\ref{morethan3}, \ref{oneseven}, \ref{sevenfivefour}, and \ref{twoorthree}, the basis $B$ must have extended type $7 \hyph 5 \hyph (4)$, $5 \hyph 5 \hyph (2)$, or $5 \hyph 5 \hyph (3)$. 
\end{remark}

In the following theorem, we show that a cap with basis of type $7 \hyph 5$ then there exists a basis $B'$ of extended type $5 \hyph 5 \hyph (2)$.

\bigskip

\begin{theorem} \label{sevenisfive}
    Let $C$ be a $10$-cap of dimension $7$ with basis $B$ of extended type $7 \hyph 5 \hyph (4)$. Then there exists a basis $B'$ of extended type $5 \hyph 5 \hyph (2)$.
\end{theorem}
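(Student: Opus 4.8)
The plan is to produce the required basis $B'$ by a single application of the Affine Basis Exchange Theorem (Theorem~\ref{newcapbasis}), after first normalizing $B$ via a dependent set template.

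First I would record a template for a basis of extended type $7\hyph5\hyph(4)$. Write $B = \{a_1,\ldots,a_8\}$ and $D = \{x_1,x_2\}$, so that $|B_1| = 7$, $|B_2| = 5$, and $|B_{12}| = 4$. Since $B_1$ omits exactly one element of $B$, I relabel so that $B_1 = \{a_1,\ldots,a_7\}$; the single element of $B_2$ lying outside $B_1$ is then forced to be $a_8$, and relabelling inside $\{a_1,\ldots,a_7\}$ I may take $B_{12} = \{a_4,a_5,a_6,a_7\}$. This yields the template $x_1 = a_1 + \cdots + a_7$ and $x_2 = a_4 + \cdots + a_8$, which is the first row of Table~\ref{dim7-10cap-templates} (so this step also supplies the template promised there).

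Next, since $|D| = 2$, the hypothesis of Theorem~\ref{newcapbasis} reduces to choosing any $a \in B_1 \cap B_2 = B_{12}$, there being no constraint from dependent elements of index $\geq 3$. I would take $a = a_4$, set $B' = (B \setminus \{a_4\}) \cup \{x_2\}$ with dependent set $D' = \{a_4, x_1\}$, and read off the new dependence data from parts (1)--(3) of the theorem: $B'_{x_1} = (B_1 \symdif B_2) \cup \{x_2\}$ and $B'_{a_4} = (B_2 \setminus \{a_4\}) \cup \{x_2\}$. Computing $B_1 \symdif B_2 = \{a_1,a_2,a_3,a_8\}$ gives $B'_{x_1} = \{a_1,a_2,a_3,a_8,x_2\}$ and $B'_{a_4} = \{a_5,a_6,a_7,a_8,x_2\}$, each of size $5$, with intersection $\{a_8, x_2\}$ of size $2$. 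Hence $B'$ has extended type $5\hyph5\hyph(2)$, as claimed; as a sanity check, this is consistent with Lemmas~\ref{morethan3} and~\ref{twoorthree}.

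There is no real obstacle here. The only points requiring care are justifying the template reduction — that $|B| = 8$ genuinely forces $B_2 \setminus B_1 = \{a_8\}$, so the relabelling is legitimate — and correctly bookkeeping the symmetric difference when invoking Theorem~\ref{newcapbasis}. Part (1) of that theorem already guarantees that $B'$ is an honest basis for $C$ with dependent set $D'$, so nothing further is needed.
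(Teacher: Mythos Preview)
Your proposal is correct and follows essentially the same approach as the paper: normalize to the template $x_1 = a_1 + \cdots + a_7$, $x_2 = a_4 + \cdots + a_8$, then swap $a_4 \in B_{12}$ for $x_2$ via the Affine Basis Exchange Theorem and read off that the two new dependence sets each have size $5$ and overlap in $\{a_8, x_2\}$. The paper performs the symmetric-difference bookkeeping by hand (computing $x_1 + x_2$ first) rather than citing parts (2)--(3) of Theorem~\ref{newcapbasis} directly, but the content is identical.
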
 

\begin{proof} 
    Let $C$ and $B$ be as in the theorem statement, and let $D = \{x_1,x_2\}$. Then $|B_1| = 7$ and $|B_2| = 5$ and $|B_{12}| = 4$. So we can let $B = \{a_1, \ldots, a_8\}$ where
    \begin{align*} 
        x_1 &= a_1 + \cdots + a_7, \\
        x_2 &= a_4 + a_5 + \cdots + a_8.
    \end{align*} 
    Adding these together, we get 
    \[
        x_1 + x_2 = a_1 + a_2 + a_3 + a_8
    \]
    and so $x_1 = x_2 + a_1 + a_2 + a_3 + a_8$.
    In particular, this means $a_4 \in B_{12}$, so by the Affine Basis Exchange Theorem (\ref{newcapbasis}), we have that $B' = (B \setminus \{a_4\}) \cup \{x_2\}$ is also a basis for $C$,  with dependent set $D' = (D \setminus \{x_2\}) \cup \{a_4\} = \{a_4,x_1\}$. Note that
    \[
        a_4 = a_5 + \cdots + a_8 + x_2,
    \]
    so both $a_4$ and $x_1$ are sums of five basis elements in $B'$ with two elements in common.
    Hence $B'$ has extended type $5 \hyph 5 \hyph (2)$.
\end{proof} 

\bigskip

We can now characterize $10$-caps of dimension $7$.

\bigskip

\begin{theorem} \label{size10dim7}
    Let $C \subseteq \Z_2^n$ be a $7$-dimensional subset of size $10$. Then $C$ is a cap if and only if $C$ has a basis of extended type $5 \hyph 5 \hyph (2)$ or $5 \hyph 5 \hyph (3)$.
\end{theorem}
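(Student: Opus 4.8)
The plan is to prove both directions of the biconditional, using the results already established. For the forward direction, suppose $C$ is a $7$-dimensional cap of size $10$ with basis $B$ and dependent set $D = \{x_1, x_2\}$. By Remark~\ref{types_for_7dim10cap}, $B$ must have extended type $7 \hyph 5 \hyph (4)$, $5 \hyph 5 \hyph (2)$, or $5 \hyph 5 \hyph (3)$. In the first case, Theorem~\ref{sevenisfive} produces a basis $B'$ of extended type $5 \hyph 5 \hyph (2)$, and in the other two cases $B$ itself is the desired basis. So in every case $C$ has a basis of extended type $5 \hyph 5 \hyph (2)$ or $5 \hyph 5 \hyph (3)$.

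For the converse, suppose $C$ is a $7$-dimensional subset of size $10$ possessing a basis $B = \{a_1, \ldots, a_8\}$ of extended type $5 \hyph 5 \hyph (2)$ or $5 \hyph 5 \hyph (3)$. In either case, after relabeling we may assume the dependent set $D = \{x_1, x_2\}$ fits the corresponding template from Table~\ref{dim7-10cap-templates}: either $x_1 = a_1 + \cdots + a_5$, $x_2 = a_4 + \cdots + a_8$ (the $(2)$ case), or $x_1 = a_1 + \cdots + a_5$, $x_2 = a_3 + \cdots + a_7$ (the $(3)$ case). We must show $C$ contains no quad, i.e.\ no four distinct elements of $C$ sum to $\vec{0}$. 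A quad uses some subset of $\{x_1, x_2\}$ together with some basis elements; I would case on how many of $x_1, x_2$ appear. If zero, four basis elements would have to sum to zero, impossible since $B$ is independent. If one, say $x_1$ (resp.\ $x_2$), then $x_1$ equals a sum of three basis elements, contradicting that $|B_1| = 5$ (resp.\ $|B_2| = 5$), since the expression of a dependent element in terms of $B$ is unique. If both $x_1$ and $x_2$ appear, then $x_1 + x_2$ equals a sum of at most two basis elements; but $x_1 + x_2$ is the sum of the elements of $B_1 \symdif B_2$, which has size $|B_1| + |B_2| - 2|B_{12}| = 10 - 2|B_{12}|$, equal to $6$ or $4$ in our two cases — never $0$, $1$, or $2$. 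Hence no quad exists and $C$ is a cap.

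I should also note, for the converse, that I am implicitly using Lemma~\ref{morethan3} only indirectly: the real content is simply that a dependent element has a \emph{unique} expression as a sum of basis elements (Lemma~\ref{lem:odd-sum} plus independence of $B$), so the counts $|B_1|, |B_2|, |B_{12}|$ genuinely constrain which sums can vanish. The only mild subtlety is making sure every quad truly falls into one of the three cases above, which follows because every element of $C$ is either a basis element or one of $x_1, x_2$, and a quad is a set of four \emph{distinct} elements summing to zero.

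The main obstacle, such as it is, is bookkeeping rather than depth: one must be careful in the "both dependent elements appear" case to correctly identify $x_1 + x_2$ with the sum over $B_1 \symdif B_2$ and to compute $|B_1 \symdif B_2|$ from $|B_{12}|$, and to confirm that the remaining basis elements in a putative quad are forced to match exactly this symmetric difference (so there is no freedom to "pad" with extra basis elements and still reach zero — again a consequence of independence of $B$). Once that is in hand, the proof is complete; no appeal to affine equivalence or templates beyond Table~\ref{dim7-10cap-templates} is needed, though Lemma~\ref{same-template} will be the natural tool in the companion result counting the equivalence classes.
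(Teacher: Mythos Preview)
Your proposal is correct and follows essentially the same approach as the paper: both directions use Remark~\ref{types_for_7dim10cap} and Theorem~\ref{sevenisfive} for the forward implication, and both verify the converse by checking that the three nontrivial affine dependencies (from $x_1$, $x_2$, and $x_1+x_2$) each involve more than four cap elements. The only cosmetic difference is that you compute $|B_1 \symdif B_2| = 10 - 2|B_{12}|$ uniformly, whereas the paper writes out the two templates explicitly and computes $x_1 + x_2$ in each case separately.
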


\begin{proof}
    Let $C$ be as in the theorem statement.

    Suppose $C$ is a cap. As described in Remark~\ref{types_for_7dim10cap}, we know $C$ must have a basis $B$ of extended type $7 \hyph 5 \hyph (4)$, $5 \hyph 5 \hyph (2)$, or $5 \hyph 5 \hyph (3)$. By Theorem~\ref{sevenisfive}, we may restrict these possibilities to just $5 \hyph 5 \hyph (2)$ and $5 \hyph 5 \hyph (3)$.

    \bigskip

    Now, suppose $C$ is any set as in the theorem statement.
    Let $B$ be a basis for $S$ with dependent set $D = \{x_1,x_2\}$, and suppose $B$ has type $5 \hyph 5 \hyph (2)$ or $5 \hyph 5 \hyph (3)$.
    To show that $C$ is a cap, we must show that $C$ does not contain a quad, i.e.~no four cap elements sum to zero.
    Equivalently, there are no affine dependence relations in $C$ involving exactly four elements. 


    Since the elements of $B$ are independent, the only affine dependence relations come from the expressions for $x_1$, $x_2$, and $x_1+x_2$ in as sums of basis elements.
    
    Suppose $B$ has extended type $5 \hyph 5 \hyph (2)$. Then we can set $B = \{a_1, \ldots, a_8\}$ such that
    \begin{equation} \label{eq:552}
    \begin{split}
        x_1 &= a_1 + \cdots + a_5, \\
        x_2 &= a_4 + a_5 + \cdots + a_8.
    \end{split}
    \end{equation}
    The expressions for $x_1$ and $x_2$ are each dependence relations with six elements, and
    \[
        x_1 + x_2 = a_1 + a_2 + a_3 + a_6 + a_7 + a_8
    \]
    gives a relation with eight elements. Therefore $C$ does not contain a quad, and must be a cap.

    Suppose $B$ has extended type $5 \hyph 5 \hyph (3)$. Then we can write $B = \{a_1, \ldots, a_8\}$ such that
    \begin{equation} \label{eq:553}
    \begin{split}
        x_1 &= a_1 + \cdots + a_5, \\
        x_2 &= a_3 + a_4 + a_5 + a_6 + a_7.
    \end{split}
    \end{equation}
    Again, the expressions for $x_1$ and $x_2$ are each relations with six elements, and
    \[
        x_1 + x_2 = a_1 + a_2 + a_6 + a_7
    \]
    is a relation with six elements. Therefore $C$ does not contain a quad, and must be a cap.
\end{proof}

\bigskip

\begin{corollary} \label{size10twoclasses}
    There are two equivalence classes of 7 dimensional $10$-caps. In one class, all caps have bases of both extended type $5 \hyph 5 \hyph (2)$ and $7 \hyph 5 \hyph (4)$.
    In the other class, all cap bases have extended type $5 \hyph 5 \hyph (3)$.
\end{corollary}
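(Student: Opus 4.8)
The plan is to combine Theorem~\ref{size10dim7} and Theorem~\ref{sevenisfive}, which together pin down the possible basis types, with Lemma~\ref{same-template}, which collapses each type to a single equivalence class, and then to exhibit an affine invariant that separates the two remaining types.

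By Theorem~\ref{size10dim7}, every $7$-dimensional $10$-cap has a basis of extended type $5\hyph5\hyph(2)$ or $5\hyph5\hyph(3)$, so it suffices to analyze these two cases. First I would show that any two caps with a basis of type $5\hyph5\hyph(2)$ are affinely equivalent: given such a cap with basis $B$ and dependent set $D=\{x_1,x_2\}$, we have $|B_1|=|B_2|=5$ and $|B_{12}|=2$, and since $|B_1\cup B_2|=8=|B|$ we may relabel the elements of $B$ so that $B_{12}=\{a_4,a_5\}$, $B_1=\{a_1,a_2,a_3,a_4,a_5\}$, and $B_2=\{a_4,a_5,a_6,a_7,a_8\}$; then $D$ fits template~\eqref{eq:552}, so Lemma~\ref{same-template} produces a single class. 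The same relabeling argument when $|B_{12}|=3$ (now $|B_1\cup B_2|=7$, so exactly one basis element, say $a_8$, lies in neither $B_1$ nor $B_2$) shows the $5\hyph5\hyph(3)$ caps fit template~\eqref{eq:553} and form a single class. Hence there are at most two equivalence classes.

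The crux is showing these two classes are distinct. For this I would use that an affine isomorphism $f(x)=L(x)+b$ carries any even-cardinality subset $T\subseteq C$ with $\sum_{t\in T}t=\vec 0$ to another such subset of the same size (in characteristic $2$ the translation term $|T|\,b$ vanishes), so the multiset of sizes of such subsets is an affine invariant; this also follows from Remark~\ref{preserved-dependencies}, since these subsets are exactly the affine dependence relations of $C$. For a $10$-cap $C=B\cup\{x_1,x_2\}$ with basis $B$, a short case check on $T\cap\{x_1,x_2\}$ (using that $B$ is affinely independent and $C$ contains no quad to rule out nonempty even subsets of $B$ summing to $\vec 0$) shows the only nonempty even subsets of $C$ summing to $\vec 0$ are $B_1\cup\{x_1\}$, $B_2\cup\{x_2\}$, and $(B_1\symdif B_2)\cup\{x_1,x_2\}$, of sizes $6$, $6$, and $12-2|B_{12}|$. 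Thus a $5\hyph5\hyph(2)$ cap has dependence-size multiset $\{6,6,8\}$ while a $5\hyph5\hyph(3)$ cap has $\{6,6,6\}$; these differ, so the two classes are genuinely distinct and there are exactly two.

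Finally, to identify the classes as stated, I would observe that a second-class cap cannot have a basis of type $5\hyph5\hyph(2)$ (that would make it equivalent to a first-class cap, contradicting the previous paragraph) nor of type $7\hyph5\hyph(4)$ (by Theorem~\ref{sevenisfive} such a cap has a $5\hyph5\hyph(2)$ basis), so by Remark~\ref{types_for_7dim10cap} all of its bases have type $5\hyph5\hyph(3)$. For the first class, Theorem~\ref{sevenisfive} shows every $7\hyph5\hyph(4)$ cap has a $5\hyph5\hyph(2)$ basis, hence lies in this class; conversely, starting from a $5\hyph5\hyph(2)$ basis and applying the Affine Basis Exchange Theorem~\ref{newcapbasis} with any $a\in B_{12}$ yields a basis $B'$ with $|B'_1|=|(B_1\symdif B_2)\cup\{x_2\}|=7$, $|B'_a|=5$, and $|B'_{1a}|=4$, i.e.\ of extended type $7\hyph5\hyph(4)$; and a first-class cap cannot have a $5\hyph5\hyph(3)$ basis (again by the invariant), so all its bases have type $5\hyph5\hyph(2)$ or $7\hyph5\hyph(4)$ and it possesses one of each. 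I expect the only nonroutine step to be the non-equivalence argument, i.e.\ recognizing that the multiset of dependence-relation sizes is the invariant that does the job; everything else is bookkeeping on top of Theorems~\ref{size10dim7} and~\ref{sevenisfive}.
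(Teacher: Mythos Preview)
Your proposal is correct and follows the same overall architecture as the paper: reduce to extended types $5\hyph5\hyph(2)$ and $5\hyph5\hyph(3)$ via Theorem~\ref{size10dim7}, collapse each to a single class using the templates~\eqref{eq:552} and~\eqref{eq:553} together with Lemma~\ref{same-template}, and then exhibit an affine invariant separating the two. The difference lies in the choice of invariant. The paper observes that in the $5\hyph5\hyph(3)$ case $|B_1\cup B_2|=7$, so one basis element is absent from every affine dependence relation, whereas in the $5\hyph5\hyph(2)$ case $|B_1\cup B_2|=8$ and every cap element appears in some relation; by Remark~\ref{preserved-dependencies} this is preserved by affine equivalence. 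You instead compute the full multiset of sizes of the (nonempty even) zero-sum subsets of $C$, obtaining $\{6,6,8\}$ versus $\{6,6,6\}$. Both invariants rest on Remark~\ref{preserved-dependencies}; the paper's is slightly quicker to state, while yours is more systematic and makes the enumeration of all dependence relations explicit.

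Your treatment of the basis-type claims in the corollary is in fact more thorough than the paper's. The paper's proof does not explicitly construct a $7\hyph5\hyph(4)$ basis from a $5\hyph5\hyph(2)$ one (it relies implicitly on Theorem~\ref{sevenisfive} and the class being closed under equivalence), nor does it spell out why every basis of a second-class cap must have extended type $5\hyph5\hyph(3)$. Your use of the Affine Basis Exchange Theorem~\ref{newcapbasis} to go from $5\hyph5\hyph(2)$ to $7\hyph5\hyph(4)$, and your argument ruling out $5\hyph5\hyph(2)$ and $7\hyph5\hyph(4)$ bases in the second class via the invariant and Theorem~\ref{sevenisfive}, fill these in cleanly.
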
 

\begin{proof}
    Let $C$ be a $10$-cap of dimension $7$. By Theorem~\ref{size10dim7}, $C$ has a basis $B$ of extended type either $5 \hyph 5 \hyph (2)$ or $5 \hyph 5 \hyph (3)$.

    If $B$ has extended type $5 \hyph 5 \hyph (3)$, by inclusion-exclusion we have
    \[
        |B_{1} \cup B_{2}| = |B_1| + |B_2| - |B_{12}| = 5 + 5 - 3 = 7,
    \]
    which means that one basis element does not appear in either $B_1$ or $B_2$, and hence it is not involved in any relations involving cap points. On the other hand, for caps with a basis $B$ of extended type $5 \hyph 5 \hyph (2)$, we have
    \[
        |B_{1} \cup B_{2}| = |B_1| + |B_2| - |B_{12}| = 5 + 5 - 2 = 8,
    \]
    which means that all elements of the cap occur in relations between other elements. By Remark~\ref{preserved-dependencies}, this property is preserved by affine equivalence. Hence, sets with bases of these two extended types cannot be affinely equivalent.

    On the other hand, in the proof of Theorem~\ref{size10dim7} we constructed dependent set templates for caps with a basis of extended type $5 \hyph 5 \hyph (2)$. Since there is a common template for all such caps, by Lemma~\ref{same-template} they must all be affinely equivalent. This proof also constructed a template for caps with bases of extended type $5 \hyph 5 \hyph (3)$, so these must all be affinely equivalent, as well.
\end{proof} 

Thus, we have exactly two equivalence classes of $10$-caps in $\Z_2^7$. Moreover, neither is complete, as we prove in Corollary~\ref{10capdim7incomplete} in the next section.


\section{Caps of size 11}

In this section, we prove that all $11$-caps in $\Z_2^7$ are equivalent. We do this by showing that any basis for such a cap must have one of three possible extended types, and constructing a dependent set template for each. These results are summarized in Table~\ref{dim7-11cap-templates}. (The constructions of the templates are in the proofs of Theorems~\ref{755is555} and~\ref{twotypes11caps} below.)

\begin{table}[h!]
    \centering
    \caption{Dependent set templates for $7$-dimensional $11$-caps with basis $\{a_1, \ldots, a_8\}$.} \label{dim7-11cap-templates}
    \begin{tabular}{|rl|}
        \hline
        \textbf{basis extended type} & \textbf{dependent set template} \\
        \hline
        $7 \hyph 5 \hyph 5 \hyph (4,4,3)$
            & $x_1 = a_1 + \cdots + a_7$ \\
            & $x_2 = a_4 + \cdots + a_8$ \\
            & $x_3 = a_1 + a_2 + a_6 + a_7 + a_8$ \\
        \hline
        $5 \hyph 5 \hyph 5 \hyph (3,3,3)$
            & $x_1 = a_1 + \cdots + a_5$ \\
            & $x_2 = a_3 + \cdots + a_7$ \\
            & $x_3 = a_1 + a_3 + a_4 + a_6 + a_8$ \\
        \hline
        $5 \hyph 5 \hyph 5 \hyph (3,3,2)$
            & $x_1 = a_1 + \cdots + a_5$ \\
            & $x_2 = a_3 + \cdots + a_7$ \\
            & $x_3 = a_1 + a_2 + a_3 + a_7 + a_8$ \\
        \hline
    \end{tabular}
\end{table}

Let $C$ be an $11$-cap of dimension $7$, with basis $B$ and dependent set $D$. Since $|B| = 8$, by Lemma~\ref{morethan3} each element of $D$ is a sum of five or seven basis elements, and by Lemma~\ref{oneseven}, at most one of is a sum of seven basis elements. Therefore $B$ must have type $5 \hyph 5 \hyph 5$ or $7 \hyph 5 \hyph 5$.

\bigskip

\begin{theorem} \label{755implies443}
    Suppose $C$ is an $11$-cap of dimension $7$ with basis $B$ of type $7 \hyph 5 \hyph 5$. Then it must have extended type $7 \hyph 5 \hyph 5 \hyph (4,4,3)$.
\end{theorem}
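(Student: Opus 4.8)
The plan is to pin down the three pairwise intersection sizes $|B_{12}|$, $|B_{13}|$, $|B_{23}|$, since the extended type is exactly $|B_1| \hyph |B_2| \hyph |B_3| \hyph (|B_{12}|, |B_{13}|, |B_{23}|)$. As $B$ has type $7 \hyph 5 \hyph 5$, after labeling we have $|B_1| = 7$ and $|B_2| = |B_3| = 5$. Applying Lemma~\ref{sevenfivefour} to the pairs $(x_1,x_2)$ and $(x_1,x_3)$ immediately gives $|B_{12}| = |B_{13}| = 4$, and applying Lemma~\ref{twoorthree} to $(x_2,x_3)$ gives $|B_{23}| \in \{2,3\}$. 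So the whole theorem reduces to ruling out $|B_{23}| = 2$, after which the extended type is forced to be $7 \hyph 5 \hyph 5 \hyph (4,4,3)$.

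To rule out $|B_{23}| = 2$, I would choose coordinates adapted to $B_1$. Since $|B_1| = 7$, exactly one basis element, say $a_8$, lies outside $B_1$, so $B_1 = \{a_1, \ldots, a_7\}$ and $x_1 = a_1 + \cdots + a_7$. Because $|B_{12}| = 4$, the set $B_2$ contains four elements of $B_1$ and exactly one element outside $B_1$, which must be $a_8$; likewise $a_8 \in B_3$. Hence $a_8 \in B_{23}$, and if $|B_{23}| = 2$ then $B_{23} = \{a_8, a_1\}$ after relabeling the unique common element of $B_2$ and $B_3$ inside $\{a_1, \ldots, a_7\}$ as $a_1$. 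The remaining three elements of $B_2$ and the remaining three of $B_3$ are then disjoint subsets of $\{a_2, \ldots, a_7\}$ whose union is all of $\{a_2, \ldots, a_7\}$, so after a further relabeling $B_2 = \{a_1, a_2, a_3, a_4, a_8\}$ and $B_3 = \{a_1, a_5, a_6, a_7, a_8\}$.

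With these expressions, $x_2 + x_3 = a_2 + a_3 + a_4 + a_5 + a_6 + a_7$, and therefore $x_1 = a_1 + (x_2 + x_3)$, i.e. $x_1 + x_2 + x_3 + a_1 = \mathbf{0}$. This is a quad inside $C$, contradicting that $C$ is a cap. Hence $|B_{23}| \neq 2$, so $|B_{23}| = 3$, and $B$ has extended type $7 \hyph 5 \hyph 5 \hyph (4,4,3)$ as claimed. I do not expect a genuine obstacle here; the only care needed is in justifying that each relabeling step merely permutes basis indices and is therefore without loss of generality, and in noting that $x_1 + x_2 + x_3 + a_1 = \mathbf{0}$ is a relation among exactly four elements — which is clear since $x_1, x_2, x_3, a_1$ are four distinct elements of $C$ and no further cancellation is possible.
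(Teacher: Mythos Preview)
Your proof is correct and follows essentially the same approach as the paper: both reduce to ruling out $|B_{23}| = 2$ by exhibiting the quad $\{x_1, x_2, x_3, a\}$ where $a$ is the unique element of $B_{123}$. The only cosmetic difference is that the paper computes $|B_{123}| = 1$ via Lemma~\ref{sumiseight} and inclusion--exclusion before writing explicit coordinates, whereas you obtain the same coordinate description directly by noting that $a_8$ is forced into both $B_2$ and $B_3$ and that the remaining parts of $B_2$ and $B_3$ must partition $\{a_2,\ldots,a_7\}$.
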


\begin{proof}
    Let $C$ and $B$ be as in the statement of the theorem. Then we can write $D = \{ x_1, x_2, x_3\}$ where $|B_1| = 7$, $|B_2| = 5$, and $|B_3| = 5$. We know $|B_{12}| = |B_{13}| = 4$ from Lemma~\ref{sevenfivefour} and that $|B_{23}| = 2$ or $|B_{23}| = 3$ from Lemma~\ref{twoorthree}.
    
    Suppose that $|B_{23}| = 2$. By Lemma~\ref{sumiseight}, we know $|B_1 \cup B_2 \cup B_3| = 8$. Then by inclusion-exclusion, we have
    \[
        |B_1 \cup B_2 \cup B_3| = |B_1| + |B_2| + |B_3| - (|B_{12}| + |B_{13}| + |B_{23}|) + |B_{123}|.
    \]
    Substituting the values from above, we get
    \[
        8 = (7 + 5 + 5) - (4 + 4 + 2) + |B_{123}|,
    \]
    so $|B_{123}| = 1$. 

    Then we can write $B = \{a_1, \ldots, a_7\}$ where
    \begin{align*} 
        x_1 & = a_1 + \ldots + a_7, \\ 
        x_2 &= a_4 + a_5 + a_6 + a_7 + a_8, \\
        x_3 &= a_1 + a_2 + a_3 + a_7 + a_8
    \end{align*}
    and hence $B_{123} = \{a_7\}$.
    Then $x_1 + x_2 + x_3 = a_7$, so $x_1, x_2, x_3, a_7$ form a quad, which is a contradiction. This shows $|B_{12}| \neq 2$, so $|B_{12}| = 3$.
\end{proof}

\bigskip

\begin{theorem} \label{755is555}
    Every $11$-cap of dimension $7$ has a basis of type $5 \hyph 5 \hyph 5$.
\end{theorem}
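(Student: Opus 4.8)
The plan is to combine Theorem~\ref{755implies443} with the Affine Basis Exchange Theorem. By the observation immediately preceding Theorem~\ref{755implies443}, any basis for an $11$-cap of dimension $7$ has type $5\hyph 5\hyph 5$ or $7\hyph 5\hyph 5$. If $C$ already has a basis of type $5\hyph 5\hyph 5$ there is nothing to prove, so it suffices to start from a basis $B$ of type $7\hyph 5\hyph 5$ and manufacture a basis of type $5\hyph 5\hyph 5$ from it.

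First I would pin down the structure of a basis $B$ of type $7\hyph 5\hyph 5$. By Theorem~\ref{755implies443} it has extended type $7\hyph 5\hyph 5\hyph(4,4,3)$, and a short inclusion--exclusion computation (using $|B_1\cup B_2\cup B_3| = 8$ from Lemma~\ref{sumiseight}) forces $|B_{123}| = 2$. From here, straightforward set bookkeeping determines the dependent set template up to relabeling the basis: since $|B_1| = 7$, the eighth basis element, call it $a_8$, must lie in both $B_2$ and $B_3$; the remaining two elements of $B_{23}$ must therefore constitute $B_{123}$; and the two elements of $B_2$ (resp.\ $B_3$) outside $B_{123}\cup\{a_8\}$ must be disjoint from $B_3$ (resp.\ $B_2$). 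Relabeling, this yields the template
\begin{align*}
    x_1 &= a_1 + a_2 + a_3 + a_4 + a_5 + a_6 + a_7, \\
    x_2 &= a_4 + a_5 + a_6 + a_7 + a_8, \\
    x_3 &= a_1 + a_2 + a_6 + a_7 + a_8,
\end{align*}
which is exactly the first row of Table~\ref{dim7-11cap-templates}.

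Next I would apply the Affine Basis Exchange Theorem~(\ref{newcapbasis}) with $a = a_4$: indeed $a_4 \in (B_1\cap B_2)\setminus B_3$, since $B_1\cap B_2 = \{a_4,a_5,a_6,a_7\}$ and $a_4\notin B_3$. Setting $B' = (B\setminus\{a_4\})\cup\{x_2\}$ and $D' = \{a_4, x_1, x_3\}$, parts (2)--(4) of the theorem give $B'_{x_1} = (B_1\symdif B_2)\cup\{x_2\} = \{a_1,a_2,a_3,a_8,x_2\}$, $B'_{a_4} = (B_2\setminus\{a_4\})\cup\{x_2\} = \{a_5,a_6,a_7,a_8,x_2\}$, and $B'_{x_3} = B_3 = \{a_1,a_2,a_6,a_7,a_8\}$, each of size $5$. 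Hence $B'$ has type $5\hyph 5\hyph 5$, which proves the theorem.

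The main obstacle is the middle step: verifying that the extended type $7\hyph 5\hyph 5\hyph(4,4,3)$ rigidly determines the template up to relabeling the basis elements. This is a pure set-intersection counting argument with no deep content, but it must be carried out carefully so that the subsequent application of the Basis Exchange Theorem is unambiguous and so that the template recorded in Table~\ref{dim7-11cap-templates} is justified. Once the template is in hand, the exchange itself is a mechanical verification using the explicit formulas in Theorem~\ref{newcapbasis}.
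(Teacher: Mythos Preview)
Your proposal is correct and follows essentially the same approach as the paper: reduce to the $7\hyph 5\hyph 5$ case, use Theorem~\ref{755implies443} (together with Lemma~\ref{sevenfivefour}) to pin down the template, and then apply the Affine Basis Exchange Theorem with $a = a_4 \in (B_1 \cap B_2) \setminus B_3$ to obtain a basis of type $5\hyph 5\hyph 5$. You supply more detail than the paper does in justifying the template (computing $|B_{123}| = 2$ and the set bookkeeping), but the argument is otherwise identical.
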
  

\begin{proof}
    Let $C$ be an $11$-cap of dimension $7$, let $B$ be a basis for $C$, and let $D = C \setminus B$. As described above, we can write $D = \{x_1, x_2, x_3\}$ where $B$ has type $5 \hyph 5 \hyph 5$ or $7 \hyph 5 \hyph 5$. If the former, then the proof is complete. 
    
    Suppose $B$ has type $7 \hyph 5 \hyph 5$, so $x_1$ is a sum of seven basis elements and $x_2, x_3$ are each sums of five. Then by Lemma~\ref{sevenfivefour} and Theorem~\ref{755implies443}, we have $|B_{12}| = |B_{13}| = 4$ and $|B_{23}| = 3$, so we can write $B = \{a_1, \ldots, a_8\}$ such that
    \begin{align*}
        x_1 &= a_1 + \cdots + a_7, \\
        x_2 &= a_4 + a_5 + \cdots + a_8, \\
        x_3 &= a_1 + a_2 + a_6 + a_7 + a_8.
    \end{align*}
    Since $a_4 \in B_{12} \setminus B_{3}$, we can use the Affine Basis Exchange Theorem (\ref{newcapbasis}) to form the basis $B' = (B \setminus \{a_4\}) \cup \{x_2\}$ for $C$ with dependent set $D'= C \setminus B' = \{a_4, x_1, x_3\}$. The theorem also implies that
    \begin{align*}
        & B'_{1} = (B_{1} \symdif B_{2}) \cup \{x_2\} = \{x_2, a_1, a_2, a_3, a_8\}, \\
        & B'_{a_4} = (B_{2} \setminus \{a_4\}) \cup \{x_2\} = \{x_2, a_5, a_6, a_7, a_8\}, \\
        & B'_{3} = B_{3}.
    \end{align*}
    Then $|B'_{1}| = |B'_{a_4}| = |B'_{3}| = 5$, so each element of $D'$ is a sum of five basis elements in $B'$, and $B'$ has type $5 \hyph 5 \hyph 5$.
\end{proof} 

\bigskip

\begin{lemma} \label{tripintersect}
    Let $C$ be an $11$-cap of dimension $7$ with basis $B$ of type $5 \hyph 5 \hyph 5$.
    Then
    \[
        |B_{123}| = (|B_{12}| + |B_{13}| + |B_{23}|) - 7.
    \]
\end{lemma}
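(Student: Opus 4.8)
The plan is to apply the inclusion-exclusion principle to the union $B_1 \cup B_2 \cup B_3$ and then substitute two pieces of information we already have: the sizes of the individual sets $B_i$ and the size of their union.

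First I would write out the three-set inclusion-exclusion identity
\[
    |B_1 \cup B_2 \cup B_3| = |B_1| + |B_2| + |B_3| - (|B_{12}| + |B_{13}| + |B_{23}|) + |B_{123}|.
\]
Next, since $B$ has type $5\hyph 5\hyph 5$, each $|B_i| = 5$, so $|B_1| + |B_2| + |B_3| = 15$. Then I would invoke Lemma~\ref{sumiseight}, which tells us that $|B_1 \cup B_2 \cup B_3| = 8$ for any three distinct elements of $D$ in a $7$-dimensional cap. Substituting both facts into the identity gives
\[
    8 = 15 - (|B_{12}| + |B_{13}| + |B_{23}|) + |B_{123}|,
\]
and solving for $|B_{123}|$ yields $|B_{123}| = (|B_{12}| + |B_{13}| + |B_{23}|) - 7$, as desired.

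There is no real obstacle here: the result is an immediate consequence of inclusion-exclusion together with the already-established facts that $\dim C = 7$ forces $|B_1 \cup B_2 \cup B_3| = 8$ (Lemma~\ref{sumiseight}) and that type $5\hyph 5\hyph 5$ forces each $|B_i| = 5$. The only thing to be careful about is confirming that Lemma~\ref{sumiseight} applies, i.e.\ that $x_1, x_2, x_3$ are indeed three distinct elements of $D$ — but this is guaranteed since an $11$-cap of dimension $7$ has $|D| = 11 - 8 = 3$.
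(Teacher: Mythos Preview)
Your proof is correct and follows essentially the same approach as the paper's: both apply three-set inclusion--exclusion, substitute $|B_1|+|B_2|+|B_3|=15$ from the type $5\hyph 5\hyph 5$, and use Lemma~\ref{sumiseight} to get $|B_1\cup B_2\cup B_3|=8$, then solve for $|B_{123}|$.
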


\begin{proof}
    Let $C$ and $B$ be as in the theorem statement, and let $D= \{x_1, x_2, x_3\}$ be the dependent set. 
    By inclusion-exclusion we have
    \begin{equation} \label{eq5}
        |B_1 \cup B_2 \cup B_3|
        = |B_1| + |B_2| + |B_3|
        - (|B_{12}| + |B_{13}| + |B_{23}|)
        + |B_{123}|.
    \end{equation}
    Since $B$ has type $5 \hyph 5 \hyph 5$, we know that
    $|B_1| + |B_2| + |B_3| = 5 + 5 + 5 = 15$.
    By Lemma~\ref{sumiseight}, we have that $|B_1 \cup B_2 \cup B_3| = 8$.
    Putting this together with equation~\eqref{eq5}, we have
    \[
        |B_{123}| = (|B_{12}| + |B_{13}| + |B_{23}|) - 7.
    \]
\end{proof}

Let $C$ be an $11$-cap of dimension $7$. By Lemmas~\ref{morethan3} and~\ref{oneseven}, any basis for $C$ must have type $7 \hyph 5 \hyph 5$ or $5 \hyph 5 \hyph 5$. By Theorem~\ref{755is555}, we can always find a basis $B$ of type $5 \hyph 5 \hyph 5$, and by Lemma~\ref{twoorthree} we know that each cardinality $|B_{ij}|$ is $2$ or $3$.
There are four possibilities for $C$:
\begin{description}
    \item[Case 1.] Each $B_{ij}$ has three elements. 
    \item[Case 2.] One $B_{ij}$ has two elements and the others each have three. 
    \item[Case 3.] One $B_{ij}$ has three elements and the others each have two.
    \item[Case 4.] Each $B_{ij}$ has two elements. 
\end{description}

It turns out that only the first two cases are valid, as we show in the following theorems.

\bigskip 

\begin{theorem}[Forbidden Triples] \label{forbiddentriples}
    Let $C$ be an $11$-cap of dimension $7$ with basis $B$ of type $5 \hyph 5 \hyph 5$. Then $|B_{ij}|= 2$ for at most one pair $\{i,j\}$.
\end{theorem}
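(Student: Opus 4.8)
The plan is to derive a contradiction whenever two or more of the pairwise intersections have size $2$. Write $D = \{x_1, x_2, x_3\}$, so each $|B_i| = 5$; recall from Lemma~\ref{twoorthree} that each $|B_{ij}| \in \{2,3\}$, from Lemma~\ref{sumiseight} that $|B_1 \cup B_2 \cup B_3| = 8$, and from Lemma~\ref{tripintersect} that $|B_{123}| = |B_{12}| + |B_{13}| + |B_{23}| - 7$. There are exactly two configurations to kill: all three pairs of size $2$, and exactly two pairs of size $2$.

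The first configuration is immediate: if $|B_{12}| = |B_{13}| = |B_{23}| = 2$, then Lemma~\ref{tripintersect} forces $|B_{123}| = 2 + 2 + 2 - 7 = -1 < 0$, which is absurd. So it remains to treat the case of exactly two pairs of size $2$. After relabeling $x_1, x_2, x_3$ I may assume $|B_{12}| = 3$ and $|B_{13}| = |B_{23}| = 2$, whence Lemma~\ref{tripintersect} gives $|B_{123}| = 3 + 2 + 2 - 7 = 0$, i.e.\ $B_1 \cap B_2 \cap B_3 = \emptyset$.

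Next I would pin down the dependent set template from these cardinalities. Since $B_{123} = \emptyset$, the pairwise intersections $B_{12}, B_{13}, B_{23}$ are pairwise disjoint, of sizes $3, 2, 2$; the "only in $B_1$" region has size $|B_1| - |B_{12}| - |B_{13}| + |B_{123}| = 5 - 3 - 2 + 0 = 0$, and likewise for $B_2$, while the "only in $B_3$" region has size $5 - 2 - 2 + 0 = 1$. These regions account for $3 + 2 + 2 + 1 = 8 = |B|$ basis elements, so every basis element lies in $B_1 \cup B_2 \cup B_3$ and the labeling is forced: we may write $B = \{a_1, \ldots, a_8\}$ with $B_{12} = \{a_1, a_2, a_3\}$, $B_{13} = \{a_4, a_5\}$, $B_{23} = \{a_6, a_7\}$, and $a_8$ the lone element of $B_3$ alone, giving
\begin{align*}
    x_1 &= a_1 + a_2 + a_3 + a_4 + a_5, \\
    x_2 &= a_1 + a_2 + a_3 + a_6 + a_7, \\
    x_3 &= a_4 + a_5 + a_6 + a_7 + a_8.
\end{align*}
Adding all three relations modulo $2$, each $a_i$ with $i \leq 7$ appears in exactly two of them and cancels, leaving $x_1 + x_2 + x_3 = a_8$. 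Hence $\{x_1, x_2, x_3, a_8\}$ is a quad in $C$, contradicting that $C$ is a cap.

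I do not expect a real obstacle here: the argument is the bookkeeping that extracts the template from the intersection numbers, followed by a one-line cancellation producing the forbidden quad. The only point needing a little care is the claim that the labeling is \emph{forced} (so that a single template suffices, and one need not analyze subcases) — this is exactly where one must use $|B_i| = 5$, the three pairwise sizes, $|B_{123}| = 0$, and $|B_1 \cup B_2 \cup B_3| = 8$ all at once.
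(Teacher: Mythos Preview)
Your proof is correct and follows essentially the same approach as the paper: both use Lemma~\ref{tripintersect} to eliminate the $(2,2,2)$ case by negativity of $|B_{123}|$, and in the $(3,2,2)$ case both pin down a forced template from the intersection data and observe that $x_1 + x_2 + x_3$ equals a single basis element, producing a quad. The only cosmetic difference is your labeling convention (you name $B_{12} = \{a_1,a_2,a_3\}$ rather than $\{a_3,a_4,a_5\}$) and your slightly more explicit Venn-region bookkeeping; the paper instead writes $x_1,x_2$ first and then argues which basis elements must land in $B_3$.
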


\begin{proof}
    Let $C$ and $B$ be as in the theorem statement, and let $D= \{x_1, x_2, x_3\}$ be the dependent set. By Lemma~\ref{tripintersect}, we have
    \begin{equation} \label{tripleint}
        |B_{123}| = (|B_{12}| + |B_{13}| + |B_{23}|) - 7.
    \end{equation}
    We will show that the extended types besides those in the theorem statement are forbidden, i.e.~not valid.
    
    If $B$ has extended type $5 \hyph 5 \hyph 5 \hyph (2,2,2)$, then each $B_{ij}$ has two elements, so Equation~\eqref{tripleint} becomes $|B_{123}| = 6 - 7 = -1$, which is impossible.

    If $B$ has type $5 \hyph 5 \hyph 5 \hyph (3,2,2)$, then $|B_{12}|=3$ and $|B_{13}| = |B_{23}| = 2$. In this case, Equation~\eqref{tripleint} yields $|B_{123}| = 7 - 7 = 0$. Since $|B_{12}|=3$ we may write $B = \{a_1, \ldots, a_8\}$ such that
    \begin{align*}
        x_1 &= a_1 + \cdots + a_5, \\
        x_2 &= a_3 + \cdots + a_7.
    \end{align*}
    Since $|B_{123}| = 0$, it follows that $B_{123} = \emptyset$, so $a_3$, $a_4$, and $a_5$ are not in $B_3$. Since $|B_{13}| = |B_{23}| = 2$, and $B_3$ cannot contain $a_3$, $a_4$ or $a_5$, it must be that $a_1, a_2, a_6, a_7 \in B_3$. Since $|B_3|=5$, this means that 
    \[
        x_3 = a_1 + a_2 + a_6 + a_7 + a_8.
    \]
    But then $x_1 + x_2 + x_3 = a_8$,
    which implies $C$ contains a quad. This is a contradiction, so the extended type $5 \hyph 5 \hyph 5 \hyph (3,2,2)$ is also impossible. By similar arguments, the extended types $5 \hyph 5 \hyph 5 \hyph (2,3,2)$ and $5 \hyph 5 \hyph 5 \hyph (2,2,3)$ are impossible, as well.
\end{proof}

We can now show that there are at most two equivalence classes of $11$-caps of dimension $7$. 

\bigskip

\begin{theorem} \label{twotypes11caps}
    Let $C \subseteq \Z_2^n$ be a subset of size $11$ and dimension $7$. Then $C$ is a cap if and only if $C$ has a basis of extended type $5 \hyph 5 \hyph 5 \hyph (3,3,3)$ or $5 \hyph 5 \hyph 5 \hyph (3,3,2)$.
\end{theorem}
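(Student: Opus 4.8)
The plan is to follow the template of Theorem~\ref{size10dim7}: prove the two implications separately, with the ``only if'' direction resting on the structural results already in hand and the ``if'' direction on an explicit dependent set template for each extended type, together with a check that no four elements of $C$ sum to zero.

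For the ``only if'' direction, suppose $C$ is a cap. By Theorem~\ref{755is555} it has a basis $B$ of type $5 \hyph 5 \hyph 5$; write $D = \{x_1, x_2, x_3\}$, so $|B_1| = |B_2| = |B_3| = 5$. By Lemma~\ref{twoorthree} each $|B_{ij}|$ is $2$ or $3$, and by the Forbidden Triples Theorem~\ref{forbiddentriples} at most one of them is $2$. If all three equal $3$, then $B$ has extended type $5 \hyph 5 \hyph 5 \hyph (3,3,3)$; if exactly one equals $2$, then after relabeling the elements of $D$ so that the size-$2$ pair is $\{2,3\}$, the basis $B$ has extended type $5 \hyph 5 \hyph 5 \hyph (3,3,2)$. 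Either way $C$ has a basis of one of the two stated types.

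For the ``if'' direction, suppose $C$ has a basis $B = \{a_1, \ldots, a_8\}$ with dependent set $D = \{x_1, x_2, x_3\}$ of one of the two extended types. Since $B$ is independent, the only affine dependence relations in $C$ come from rewriting one of the $x_i$, one of the sums $x_i + x_j$, or the sum $x_1 + x_2 + x_3$ over $B$; I must show none of these involves exactly four elements. A single relation involves $1 + |B_i| = 6$ elements, and a pairwise relation involves $2 + |B_i \symdif B_j| = 12 - 2|B_{ij}| \in \{6, 8\}$ elements, so neither can be a quad. The triple relation involves $3 + n$ elements, where $n$ is the number of basis elements lying in an odd number of the $B_i$, so the one thing left to rule out is $n = 1$. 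To handle this I would first determine $|B_{123}|$ from $|B_1 \cup B_2 \cup B_3| \le |B| = 8$ (which by inclusion–exclusion gives $|B_{123}| \le 2$ for type $(3,3,3)$ and $|B_{123}| \le 1$ for type $(3,3,2)$) together with $|B_i \setminus (B_j \cup B_k)| \ge 0$ (which gives $|B_{123}| \ge 1$ in both cases): thus $|B_{123}| = 1$ for type $5 \hyph 5 \hyph 5 \hyph (3,3,2)$, while for type $5 \hyph 5 \hyph 5 \hyph (3,3,3)$ either $|B_{123}| = 2$ or $|B_{123}| = 1$, and in the latter case $x_1 + x_2 + x_3$ collapses to a single basis element, which is a quad, so a cap of type $(3,3,3)$ must have $|B_{123}| = 2$. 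Once $|B_{123}|$ is pinned down, the pattern of which $B_i$ each basis element belongs to is completely rigid, which produces the templates in Table~\ref{dim7-11cap-templates}; a direct count then yields $n = 5$ for type $(3,3,3)$ and $n = 3$ for type $(3,3,2)$, so the triple relation has $8$ or $6$ elements and $C$ is a cap. Since all caps of a given type then fit a common template, Lemma~\ref{same-template} also shows any two such caps are affinely equivalent, which will feed the subsequent classification into equivalence classes.

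The main obstacle is precisely the triple-sum case: for $11$-caps the extended type records only the pairwise data $|B_{ij}|$, not the triple intersection $|B_{123}|$, so this quantity must be recovered from the ambient size and dimension before the template can be written down, and for extended type $(3,3,3)$ that recovery is genuinely two-valued, with only $|B_{123}| = 2$ consistent with $C$ being a cap. Everything after that — writing the three templates from the forced membership pattern and counting the terms in each relation — is a routine finite verification.
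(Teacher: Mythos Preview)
Your plan tracks the paper's proof closely: the ``only if'' direction via Theorem~\ref{755is555} and Theorem~\ref{forbiddentriples}, the ``if'' direction by writing templates and checking all affine relations. The paper dispatches the pairwise relations by citing Theorem~\ref{size10dim7} on the $10$-subcaps $B\cup\{x_i,x_j\}$, while you count $|B_i\symdif B_j|$ directly; either works. Where you diverge is in computing $|B_{123}|$: the paper simply invokes Lemma~\ref{tripintersect} to get $|B_{123}|=2$ for type $(3,3,3)$ and $|B_{123}|=1$ for type $(3,3,2)$, whereas you bound it combinatorially via $|B_1\cup B_2\cup B_3|\le 8$ and $|B_i\setminus(B_j\cup B_k)|\ge 0$, and find that for type $(3,3,3)$ both $|B_{123}|=1$ and $|B_{123}|=2$ are a priori possible.

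You are right and the paper is sloppy here. Lemma~\ref{tripintersect} rests on Lemma~\ref{sumiseight}, which is stated only for caps, so citing it in the ``if'' direction --- where $C$ is not yet known to be a cap --- is circular. And the case $|B_{123}|=1$ genuinely occurs: with $x_1=a_1+a_2+a_3+a_4+a_5$, $x_2=a_1+a_2+a_3+a_6+a_7$, $x_3=a_1+a_4+a_5+a_6+a_7$, the basis has extended type $5\hyph5\hyph5\hyph(3,3,3)$ yet $x_1+x_2+x_3=a_1$, a quad. So the ``if'' direction is literally false as stated. What is true, and what the templates in Table~\ref{dim7-11cap-templates} actually encode, is that $C$ is a cap provided it has a basis of extended type $5\hyph5\hyph5\hyph(3,3,3)$ \emph{with $|B_{123}|=2$}, or of extended type $5\hyph5\hyph5\hyph(3,3,2)$ (where your bounds do force $|B_{123}|=1$). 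That corrected statement is all the paper needs downstream.

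Your own write-up blurs the two directions at the key moment: the clause ``so a cap of type $(3,3,3)$ must have $|B_{123}|=2$'' is an ``only if'' assertion inserted into the ``if'' argument. What your analysis really establishes is not that the ``if'' direction holds, but that its hypothesis must be strengthened --- say that outright.
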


Note that if a subset $C$ has a basis of extended type $5 \hyph 5 \hyph 5 \hyph (2,3,3)$ or $5 \hyph 5 \hyph 5 \hyph (3,2,3)$, then we can always reorder the dependent elements so that the extended type is $5 \hyph 5 \hyph 5 \hyph (3,3,2)$. This is why we do not need to address these other extended types in the theorem above.

\begin{proof}
    Let $C$ be as in the theorem statement. One direction of the theorem follows immediately from Theorem~\ref{forbiddentriples}. For the other direction, suppose that $C$ has a basis of extended type $5 \hyph 5 \hyph 5 \hyph (3,3,3)$ or $5 \hyph 5 \hyph 5 \hyph (3,3,2)$, where $D = C \setminus B = \{x_1, x_2, x_3\}$.

    \bigskip

    Suppose $B$ is a basis for $C$ of extended type $5 \hyph 5 \hyph 5 \hyph (3,3,3)$. Then $|B_{ij}| = 3$ for all pairs $\{i,j\}$, and by Lemma~\ref{tripintersect} we know $|B_{123}| = (3+3+3) - 7 = 2$. Hence we can write $B = \{a_1, \ldots, a_8\}$ such that
    \begin{align*} 
        x_1 &= a_1 + \cdots + a_5, \\
        x_2 &= a_3 + \cdots + a_7.
    \end{align*} 
    Since $|B_{123}| = 2$, we know $B_3$ contains two of $a_3$, $a_4$, and $a_5$, so without loss of generality, suppose it contains $a_3, a_4 \in B_3$. The set $B_3$ additionally contains another element of $B_1$ and $B_2$, respectively, so without loss of generality we may write
    \[
        x_3 = a_1 + a_3 + a_4 + a_6 + a_8.
    \]

    Now we show that $C$ is a cap by checking that there are no affine dependencies among the elements of $C$ involving precisely four elements. All affine dependencies in $C$ are the result of linear combinations of the dependent elements, which are 
    \begin{equation} \label{dep-combos}
        \vec{0}, \, x_1, \, x_2, \, x_3, \, x_1+x_2, \, x_1+x_3, \, x_2+x_3, \, x_1+x_2+x_3.
    \end{equation}
    Observe that each set $B \cup \{x_i, x_j\}$ has size $10$ and dimension $7$ with basis $B$ of extended type $5 \hyph 5 \hyph (3)$, which by Theorem~\ref{size10dim7} must be a cap. It follows that every affine dependence without all of $x_1, x_2, x_3$ must contain more or less than four elements of $C$.
    Since
    \[
        x_1 + x_2 + x_3 = a_2 + a_3 + a_4 + a_7 + a_8
    \]
    involves eight elements, it follows that $C$ does not contain a quad, hence it is a cap.

    \bigskip

    Suppose $B$ is a basis for $C$ of extended type $5 \hyph 5 \hyph 5 \hyph (3,3,2)$. Then $|B_{12}| = |B_{13}| = 3$ and $|B_{23}| = 2$, and by Lemma~\ref{tripintersect} we know $|B_{123}| = (3+3+2) - 7 = 1$. Hence we can write $B = \{a_1, \ldots, a_8\}$ such that
    \begin{align*}
        x_1 &= a_1 + a_2 + a_3 + a_4 + a_5, \\
        x_2 &= a_3 + a_4 + a_5 + a_6 + a_7.
    \end{align*} 
    Since $|B_{123}| = 1$, we can assume without loss of generality that $B_3$ contains $a_3$ but not $a_4$ or $a_5$. Since $|B_{13}| = 3$ and $|B_{23}| = 2$, it must be that $B_3$ contains $a_1$ and $a_2$, and exactly one of $a_6$ and $a_7$. So without loss of generality, we can write
    \[
        x_3 = a_1 + a_2 + a_3 + a_7 + a_8.
    \]

    Now we show that $C$ is a cap. As above, we must check that none of the elements in \eqref{dep-combos} result in affine dependencies with exactly four elements of $C$.
    Observe that each set $B \cup \{x_i, x_j\}$ has size $10$ and dimension $7$ with basis $B$ of extended type $5 \hyph 5 \hyph (2)$ or $5 \hyph 5 \hyph (3)$, which by Theorem~\ref{size10dim7} must be a cap. It follows that every affine dependence without all of $x_1, x_2, x_3$ must contain more or less than four elements of $C$.
    Since
    \[
        x_1 + x_2 + x_3 = a_3 + a_6 + a_8
    \]
    involves six elements, it follows that $C$ does not contain a quad, hence it is a cap.
\end{proof}

The following corollary, which follows directly from the previous proof, will be used throughout the rest of the paper.

\bigskip

\begin{corollary} \label{triples}
    Let $C$ be a cap of dimension $7$ with basis $B$ of type $5 \hyph 5 \hyph 5$.
    \begin{enumerate}
        \item If $|B_{ij}| = 2$ for some pair $\{i,j\}$, then $|B_{123}| = 1$.
        \item If $|B_{ij}| = 3$ for all pairs $\{i,j\}$, then $|B_{123}| = 2$.
    \end{enumerate} 
\end{corollary}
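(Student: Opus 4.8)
The plan is to obtain both parts directly from the inclusion–exclusion identity of Lemma~\ref{tripintersect}, combined with the two constraints already established on pairwise intersection sizes. First I would observe that the hypotheses of the relevant earlier results are satisfied: a cap of dimension $7$ whose basis $B$ has type $5\hyph5\hyph5$ has $|B| = 8$ and $|D| = 3$, hence $|C| = 11$, so Lemma~\ref{tripintersect} and Theorem~\ref{forbiddentriples} both apply verbatim. Lemma~\ref{tripintersect} gives
\[
    |B_{123}| = (|B_{12}| + |B_{13}| + |B_{23}|) - 7,
\]
and by Lemma~\ref{twoorthree} each of the three summands on the right-hand side is either $2$ or $3$.

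For part (1), suppose $|B_{ij}| = 2$ for some pair $\{i,j\}$. By the Forbidden Triples theorem (Theorem~\ref{forbiddentriples}), at most one pair can have intersection size $2$, so the other two pairs must have intersection size exactly $3$. Hence $|B_{12}| + |B_{13}| + |B_{23}| = 2 + 3 + 3 = 8$, and the identity above yields $|B_{123}| = 8 - 7 = 1$. For part (2), if $|B_{ij}| = 3$ for every pair, then the sum is $9$, and the identity gives $|B_{123}| = 9 - 7 = 2$.

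I do not anticipate any real obstacle: the substantive work has already been done, namely the inclusion–exclusion bookkeeping in Lemma~\ref{tripintersect} (which uses $|B_1\cup B_2\cup B_3| = 8$ from Lemma~\ref{sumiseight}) and the elimination of the configurations with two or three pairs of size $2$ in Theorem~\ref{forbiddentriples}; this corollary merely repackages those facts into the form most convenient for later use. The only point requiring any care is to note at the outset that a dimension-$7$ cap with a basis of type $5\hyph5\hyph5$ is an $11$-cap, so that the cited results are genuinely applicable.
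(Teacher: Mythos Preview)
Your proposal is correct and follows essentially the same approach as the paper: the corollary is recorded immediately after Theorem~\ref{twotypes11caps} as following directly from its proof, and that proof computes $|B_{123}|$ via Lemma~\ref{tripintersect} after Theorem~\ref{forbiddentriples} has reduced the possibilities to the extended types $5\hyph5\hyph5\hyph(3,3,3)$ and $5\hyph5\hyph5\hyph(3,3,2)$. Your only addition is to make explicit the observation that such a cap must be an $11$-cap so that those earlier results apply, which is a reasonable clarification.
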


\bigskip

We can also now show that no $10$-cap can be complete.

\bigskip

\begin{corollary} \label{10capdim7incomplete}
     No $10$-cap is complete. 
\end{corollary}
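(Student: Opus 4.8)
The plan is to reduce to dimension $7$ by a counting argument, and then to use the classification of $7$-dimensional $10$-caps to extend any such cap to an $11$-cap inside its own affine span, so that it cannot be maximal there.

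For the reduction, recall that a cap $C$ is complete exactly when $\qc_1(C) = \aff(C)$. Since $|\qc_1(C)| \le |C| + \binom{|C|}{3}$, any $10$-cap satisfies $|\qc_1(C)| \le 10 + 120 = 130$. Hence if a $10$-cap $C$ of dimension $d$ were complete, we would need $2^d = |\aff(C)| = |\qc_1(C)| \le 130$, forcing $d \le 7$; combined with Lemma~\ref{no10in6}, which gives $d \ge 7$, a complete $10$-cap would be exactly $7$-dimensional. So it suffices to prove that no $7$-dimensional $10$-cap is complete, and since completeness is preserved by affine isomorphisms (which commute with both $\qc_1$ and $\aff$), it is enough to check one cap from each affine equivalence class.

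By Theorem~\ref{size10dim7} and Corollary~\ref{size10twoclasses}, every $7$-dimensional $10$-cap has a basis of extended type $5\hyph 5\hyph(2)$ or $5\hyph 5\hyph(3)$, so it is affinely equivalent to the corresponding template cap from Table~\ref{dim7-10cap-templates}. The constructions already carried out in the proof of Theorem~\ref{twotypes11caps} exhibit each of these template caps as a proper subcap of an $11$-cap with the same basis $B$: the $5\hyph 5\hyph(3)$ template is enlarged to the $5\hyph 5\hyph 5\hyph(3,3,3)$ cap by adjoining the point $a_1 + a_3 + a_4 + a_6 + a_8$, and the $5\hyph 5\hyph(2)$ template occurs (after relabeling the basis elements) as one of the subcaps $B \cup \{x_i,x_j\}$ of the $5\hyph 5\hyph 5\hyph(3,3,2)$ cap. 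In each case the adjoined point is a sum of an odd number of elements of $B$, hence it lies in $\aff(B) = \aff(C)$, and it is distinct from the ten points of $C$. Therefore such a $C$ is not a maximal cap in $\aff(C)$ and so is not complete; by the affine invariance of completeness this settles every $7$-dimensional $10$-cap, and with the reduction above, every $10$-cap.

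The one step that requires genuine care is the dimension reduction: the bound $|\qc_1(C)| \le 130$ only just exceeds $2^7 = 128$, so it eliminates dimensions $8$ and above but says nothing in dimension $7$, where we must fall back on the structural classification. Everything after that is routine — one only needs to note that the point adjoined to each template cap is both new and inside the affine span, both of which are immediate from the template descriptions.
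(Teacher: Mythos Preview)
Your proof is correct and follows essentially the same strategy as the paper's: reduce to dimension $7$ via the counting bound $|\qc_1(C)| \le 130 < 2^8$ together with Lemma~\ref{no10in6}, then extend each of the two template $10$-caps to an $11$-cap inside its own affine span using the constructions from Theorem~\ref{twotypes11caps}. The only cosmetic difference is that the paper writes down an explicit extension point for each template directly, whereas for the $5\hyph 5\hyph(2)$ case you instead identify the template (after a basis permutation) with the $10$-point subcap $B \cup \{x_2,x_3\}$ of the $5\hyph 5\hyph 5\hyph(3,3,2)$ template; both routes are valid.
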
 

\begin{proof}
    Let $C$ be a $10$-cap. By Lemma~\ref{no10in6}, we know $\dim(C) \geq 7$. Suppose $\dim(C) = 7$. Then by Theorem~\ref{size10dim7} and its proof, we know $C$ has a basis $B = \{a_1, \ldots, a_8\}$ with dependent set $D = \{x_1, x_2\}$ where either
    \begin{equation} \label{552eqns}
    \begin{split}
        x_1 &= a_1 + \cdots + a_5, \\
        x_2 &= a_4 + \cdots + a_8
    \end{split}
    \end{equation}
    (if the basis has extended type $5 \hyph 5 \hyph (2)$) or
    \begin{equation} \label{553eqns}
    \begin{split}
        x_1 &= a_1 + \cdots + a_5, \\
        x_2 &= a_3 + \cdots + a_7
    \end{split}
    \end{equation}
    (if the basis has extended type $5 \hyph 5 \hyph (3)$).
    
    Suppose $x_1,x_2$ satisfy the equations in \eqref{552eqns}. Let $x_3 = a_1 + a_2 + a_5 + a_6 + a_7$. Since this is an affine combination of elements of $C$, we know $x_3 \in \aff(C)$. Then $C' = C \cup \{x_3\}$ is a subset of $\aff(C)$ with basis $B$ of extended type $5 \hyph 5 \hyph 5 \hyph 5 \hyph (2,3,3)$, or using the ordering $x_1,x_3,x_2$, extended type 
    $5 \hyph 5 \hyph 5 \hyph 5 \hyph (3,3,2)$.
    By Theorem~\ref{twotypes11caps} it follows that $C'$ is an $11$-cap.

    Suppose $x_1,x_2$ satisfy the equations in \eqref{553eqns}. Let $x_3 = a_1 + a_3 + a_6 + a_7 + a_8$. Since this is an affine combination of elements of $C$, we know $x_3 \in \Z_2^7$. Then $C' = C \cup \{x_3\}$ is a subset of $\Z_2^7$ with basis $B$ of extended type $5 \hyph 5 \hyph 5 \hyph 5 \hyph (3,3,2)$, so by Theorem~\ref{twotypes11caps} it follows that $C'$ is an $11$-cap.

    Therefore, in either case, the cap $C$ can be extended to a larger cap of the same dimension, so it cannot be complete.

    \bigskip

    Now suppose that $\dim(C) > 7$. Note that the first quad closure of $C$ has at most
    \[
        |C| + \binom{|C|}{3}
        = 10 + \binom{10}{3}
        = 130
    \]
    elements. But $\aff(C)$ has $2^{\dim(C)} \geq 2^8 = 256$ elements, so it is impossible for the first quad closure of $C$ to equal $\aff(C)$.
\end{proof}

Now we show that, in contrast to Corollary~\ref{size10twoclasses}, the two extended types for bases of $11$-caps described in Theorem~\ref{twotypes11caps} constitute a single equivalence class.

\bigskip 

\begin{proposition} \label{just555332}
    Every $11$-cap of dimension $7$ has a basis of extended type $5 \hyph 5 \hyph 5 \hyph (3,3,2)$.
\end{proposition}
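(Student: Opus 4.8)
The plan is to reduce to the one nontrivial case using Theorem~\ref{twotypes11caps}. That theorem tells us that a $7$-dimensional $11$-cap $C$ has a basis of extended type $5 \hyph 5 \hyph 5 \hyph (3,3,3)$ or $5 \hyph 5 \hyph 5 \hyph (3,3,2)$. In the second case there is nothing to prove, so the whole proposition comes down to showing: \emph{if $C$ has a basis of extended type $5 \hyph 5 \hyph 5 \hyph (3,3,3)$, then it also has a basis of extended type $5 \hyph 5 \hyph 5 \hyph (3,3,2)$.} The tool for this is the Affine Basis Exchange Theorem~(\ref{newcapbasis}), which lets us trade one basis element for a dependent element and tells us exactly how the sets $B_i$ change.

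So suppose $C$ has basis $B$ of extended type $5 \hyph 5 \hyph 5 \hyph (3,3,3)$ with $D = \{x_1, x_2, x_3\}$. By the analysis in the proof of Theorem~\ref{twotypes11caps} (equivalently, the template in Table~\ref{dim7-11cap-templates}), after relabelling we may assume $B = \{a_1, \ldots, a_8\}$ with
\begin{align*}
    x_1 &= a_1 + \cdots + a_5, \\
    x_2 &= a_3 + \cdots + a_7, \\
    x_3 &= a_1 + a_3 + a_4 + a_6 + a_8.
\end{align*}
Then $B_1 = \{a_1, \ldots, a_5\}$, $B_2 = \{a_3, \ldots, a_7\}$, $B_3 = \{a_1, a_3, a_4, a_6, a_8\}$, and a direct check gives $(B_1 \cap B_2) \setminus B_3 = \{a_5\}$. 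I would therefore apply the Affine Basis Exchange Theorem~(\ref{newcapbasis}) with $a = a_5$, swapping $a_5$ out of the basis and $x_2$ in, obtaining the basis $B' = (B \setminus \{a_5\}) \cup \{x_2\}$ with dependent set $D' = \{x_1, a_5, x_3\}$. Parts (2)–(4) of that theorem then give
\begin{align*}
    B'_{x_1} &= (B_1 \symdif B_2) \cup \{x_2\} = \{x_2, a_1, a_2, a_6, a_7\}, \\
    B'_{a_5} &= (B_2 \setminus \{a_5\}) \cup \{x_2\} = \{x_2, a_3, a_4, a_6, a_7\}, \\
    B'_{x_3} &= B_3 = \{a_1, a_3, a_4, a_6, a_8\}.
\end{align*}

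Finally I would read off the extended type of $B'$ from these three sets: each has size $5$, and $|B'_{x_1} \cap B'_{a_5}| = 3$, $|B'_{a_5} \cap B'_{x_3}| = 3$, $|B'_{x_1} \cap B'_{x_3}| = 2$, so $B'$ has extended type $5 \hyph 5 \hyph 5 \hyph (3,2,3)$, which after reordering the dependent set is $5 \hyph 5 \hyph 5 \hyph (3,3,2)$, as desired. I do not expect any real obstacle here: the argument is a single bookkept application of the Affine Basis Exchange Theorem, and the only points needing care are checking that the chosen element $a_5$ genuinely lies in $(B_1 \cap B_2)\setminus B_3$ so that the hypothesis of that theorem is met, and then correctly evaluating the three symmetric differences and their pairwise intersections. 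By the symmetry of the template one could instead exchange using $a_1 \in (B_1 \cap B_3)\setminus B_2$ or $a_6 \in (B_2 \cap B_3)\setminus B_1$, and any of these choices works.
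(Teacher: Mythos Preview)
Your proof is correct and follows the same overall strategy as the paper: reduce via Theorem~\ref{twotypes11caps} to the $5\hyph 5\hyph 5\hyph(3,3,3)$ case, then produce a new basis of extended type $5\hyph 5\hyph 5\hyph(3,3,2)$ by an explicit exchange. The only difference is in execution: the paper (using a relabelled version of the template) swaps \emph{two} basis elements for \emph{two} dependent elements by hand, whereas you perform a single, clean application of the Affine Basis Exchange Theorem~(\ref{newcapbasis}) with $a=a_5$; your route is slightly more economical and makes more direct use of the machinery already set up.
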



\begin{proof}
    Let $C$ be an $11$-cap of dimension $7$. By Theorem~\ref{twotypes11caps}, we know $C$ has a basis of extended type $5 \hyph 5 \hyph 5 \hyph (3,3,2)$ or $5 \hyph 5 \hyph 5 \hyph (3,3,3)$. If the former, then the proof is complete.

    Suppose $C$ has a basis $B$ of extended type $5 \hyph 5 \hyph 5 \hyph (3,3,3)$. 
    From the proof of Theorem~\ref{twotypes11caps}, we can write $B = \{a_1, \ldots, a_8\}$ such that
    \begin{align*}
        x_1 &= a_1 + a_2 + a_3 + a_4 + a_5, \\ 
        x_2 &= a_1 + a_2 + a_3 + a_6 + a_7, \\ 
        x_3 &= a_1 + a_2 + a_4 + a_6 + a_8,
    \end{align*}
    where $x_1,x_2,x_3$ are the dependent elements.
    From this, we obtain
    \begin{align*} 
        x_1 + x_2 &= a_4 + a_5 + a_6 + a_7, \\
        x_1 + x_3 &= a_3 + a_5 + a_6 + a_8.
    \end{align*} 

    Let $B' =\{a_2, a_3, a_5, a_6, a_7, x_1, x_2, a_8\}$, and $D' = C \setminus B' = \{a_1, a_4, x_3\}$.
    Rearranging the equations above, we can write the elements of $D'$ as
    \begin{align*}
        a_4 &= a_5 + a_6 + a_7 + x_1 + x_2, \\ 
        a_1 &= a_2 + a_3 + a_6 + a_7 + x_2, \\
        x_3 &= a_3 + a_5 + a_6 + x_1 + a_8. 
    \end{align*}
    Since $|B'| = 8$ and the elements of $D'$ are affine combinations of elements of $B'$, the set $B'$ is a basis for $C$.
    Observe that
    \begin{align*} 
        |B_{a_4} \cap B_{a_1}| &= 3, \\
        |B_{a_4} \cap B_{x_3}| &= 3, \\ 
        |B_{a_1} \cap B_{x_3}| &= 2. 
    \end{align*} 
    Thus $C$ has a basis $B'$ of extended type $5 \hyph 5 \hyph 5 \hyph (3,3,2)$.
\end{proof}

\bigskip

\begin{corollary} \label{one11cap}
    All $11$-caps of dimension $7$ are equivalent.
\end{corollary}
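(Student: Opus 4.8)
The plan is to reduce everything to a single dependent set template and then invoke Lemma~\ref{same-template}. First I would recall that, by Theorem~\ref{twotypes11caps}, any $11$-cap of dimension $7$ possesses a basis of extended type $5 \hyph 5 \hyph 5 \hyph (3,3,3)$ or $5 \hyph 5 \hyph 5 \hyph (3,3,2)$. Proposition~\ref{just555332} upgrades this: every such cap has a basis of extended type $5 \hyph 5 \hyph 5 \hyph (3,3,2)$. So it suffices to show that any two $11$-caps of dimension $7$, each equipped with a basis of extended type $5 \hyph 5 \hyph 5 \hyph (3,3,2)$, are affinely equivalent.

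Next I would point to the construction in the proof of Theorem~\ref{twotypes11caps}: there it is shown that for a basis $B = \{a_1, \ldots, a_8\}$ of extended type $5 \hyph 5 \hyph 5 \hyph (3,3,2)$ with dependent set $D = \{x_1, x_2, x_3\}$, the points can always be labeled so that
\begin{align*}
    x_1 &= a_1 + a_2 + a_3 + a_4 + a_5, \\
    x_2 &= a_3 + a_4 + a_5 + a_6 + a_7, \\
    x_3 &= a_1 + a_2 + a_3 + a_7 + a_8.
\end{align*}
This is a dependent set template in the sense defined after Example~\ref{template-example}. Since every $11$-cap of dimension $7$ has a basis fitting this same template, and all such caps have the same size ($11$) and dimension ($7$), Lemma~\ref{same-template} immediately yields that they are all affinely equivalent, completing the proof.

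The one point that genuinely needs care — and which I would make sure is already nailed down in the proof of Theorem~\ref{twotypes11caps} — is that the ``without loss of generality'' relabelings used there are legitimate: namely that the $|B_{123}| = 1$ element of $B_3$, the two further elements of $B_3 \cap B_1$, and the choice of which of $a_6, a_7$ lies in $B_3$, can all be fixed by simply permuting the basis labels, so that \emph{every} cap of this extended type is brought into exact agreement with the displayed equations. Once that is granted, the corollary is essentially a formality: it is just the composition of Proposition~\ref{just555332} (reduction to one extended type), the explicit template, and Lemma~\ref{same-template}. There is no new combinatorial obstacle here; the substantive work was already done in Theorem~\ref{forbiddentriples}, Theorem~\ref{twotypes11caps}, and Proposition~\ref{just555332}.
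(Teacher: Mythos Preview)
Your proposal is correct and follows essentially the same approach as the paper: invoke Proposition~\ref{just555332} to reduce to extended type $5 \hyph 5 \hyph 5 \hyph (3,3,2)$, cite the dependent set template built in the proof of Theorem~\ref{twotypes11caps}, and conclude via Lemma~\ref{same-template}. Your added remark about verifying the legitimacy of the ``without loss of generality'' relabelings is a reasonable caution, but the paper treats this as already handled by the template construction.
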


\begin{proof}
    By Proposition~\ref{just555332}, we know every $11$-cap of dimension $7$ has a basis of extended type $5 \hyph 5 \hyph 5 \hyph (3,3,2)$. In the proof of Theorem~\ref{twotypes11caps}, we constructed a dependent set template for all such caps and bases. By Lemma~\ref{same-template}, this means all such caps are affinely equivalent. Hence there is only one equivalence class of $11$-caps in $\Z_2^7$.
\end{proof}

\bigskip


\section{Caps of size 12}

In this section, we prove that all $12$-caps in $\Z_2^7$ are equivalent. We do this by showing that any basis for such a cap must have one of three possible extended types, and constructing a dependent set template for each. These results are summarized in Table~\ref{dim7-12cap-templates}. (The constructions of the templates are in the proofs of Theorems~\ref{7555implies5555} and~\ref{two12caps-v2} below.)

\begin{table}[h]
    \centering
    \caption{Dependent set templates for $7$-dimensional $12$-caps with basis $\{a_1, \ldots, a_8\}$.} \label{dim7-12cap-templates}
    \begin{tabular}{|rl|}
        \hline
        \textbf{basis extended type} & \textbf{dependent set template} \\
        \hline
        $7 \hyph 5 \hyph 5 \hyph 5 \hyph (4,4,4,3,3,3)$
            & $x_1 = a_1 + \cdots + a_7$ \\
            & $x_2 = a_4 + \cdots + a_8$ \\
            & $x_3 = a_1 + a_2 + a_6 + a_7 + a_8$ \\
            & $x_4 = a_1 + a_3 + a_5 + a_7 + a_8$ \\
        \hline
        $5 \hyph 5 \hyph 5 \hyph 5 \hyph (2,3,3,3,3,3)$
            & $x_1 = a_1 + \cdots + a_5$ \\
            & $x_2 = a_1 + a_2 + a_3 + a_7 + a_8$ \\
            & $x_3 = a_3 + \cdots + a_7$ \\
            & $x_4 = a_2 + a_3 + a_4 + a_6 + a_8$ \\
        \hline
        $5 \hyph 5 \hyph 5 \hyph 5 \hyph (2,3,3,3,3,2)$
            & $x_1 = a_1 + \cdots + a_5$ \\
            & $x_2 = a_1 + a_2 + a_3 + a_7 + a_8$ \\
            & $x_3 = a_3 + \cdots + a_7$ \\
            & $x_4 = a_2 + a_4 + a_6 + a_7 + a_8$ \\
        \hline
    \end{tabular}
\end{table}


\bigskip 

\begin{theorem} \label{7555implies5555}
    Every $12$-cap of dimension $7$ has a basis of type $5 \hyph 5 \hyph 5 \hyph 5$.
\end{theorem}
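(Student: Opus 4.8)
The plan is to mirror the proof of Theorem~\ref{755is555} for $11$-caps, extended to handle the extra dependent element. Let $C$ be a $12$-cap of dimension $7$ with basis $B$ and dependent set $D = \{x_1, x_2, x_3, x_4\}$, so $|B| = 8$ and $|D| = 4$. By Lemma~\ref{morethan3} each $x_i$ is a sum of five or seven basis elements, and by Lemma~\ref{oneseven} at most one is a sum of seven. So either $B$ already has type $5 \hyph 5 \hyph 5 \hyph 5$, in which case we are done, or $B$ has type $7 \hyph 5 \hyph 5 \hyph 5$, say $|B_1| = 7$ and $|B_2| = |B_3| = |B_4| = 5$. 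The goal is then to perform an Affine Basis Exchange (Theorem~\ref{newcapbasis}) that swaps the ``size-$7$'' element $x_1$ out for a basis element, turning it into a size-$5$ dependent element.

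First I would record the pairwise intersection data. By Lemma~\ref{sevenfivefour}, $|B_{1j}| = 4$ for $j = 2, 3, 4$. I also need to know that some basis element lies in $B_{12} \setminus (B_3 \cup B_4)$, so that the hypotheses of Theorem~\ref{newcapbasis} are met with $a \in (B_1 \cap B_2)\setminus(B_3\cup B_4)$ — this lets us replace $B$ by $B' = (B\setminus\{a\})\cup\{x_2\}$ with dependent set $D' = \{a, x_1, x_3, x_4\}$. The key is that after the exchange, part~(2) of Theorem~\ref{newcapbasis} gives $B'_1 = (B_1 \symdif B_2)\cup\{x_2\}$, and since $|B_1| = 7$, $|B_2| = 5$, $|B_{12}| = 4$ we get $|B_1 \symdif B_2| = 7 + 5 - 2\cdot 4 = 4$, so $|B'_1| = 5$. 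Part~(3) gives $|B'_a| = |B_2 \setminus\{a\}| + 1 = 5$, and part~(4) gives $|B'_3| = |B_3| = 5$ and $|B'_4| = |B_4| = 5$. Hence every element of $D'$ is a sum of five basis elements and $B'$ has type $5 \hyph 5 \hyph 5 \hyph 5$.

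The main obstacle is verifying that an element $a \in (B_1 \cap B_2)\setminus(B_3\cup B_4)$ actually exists. I would argue this by inclusion–exclusion/counting on $B_1$. Since $|B_1| = 7$, exactly one basis element, call it $b$, is missing from $B_1$; by Lemma~\ref{sumiseight} applied to $\{x_1, x_2, x_3\}$ (and similar triples) one controls how $B_2, B_3, B_4$ sit inside $B_1$. Concretely, $|B_1 \cap B_2| = |B_{12}| = 4$ unless $b \in B_2$, but in any case $|B_2 \cap B_1| \geq 4$; similarly $B_3, B_4$ each meet $B_1$ in at least $4$ elements. If every element of $B_1 \cap B_2$ lay in $B_3 \cup B_4$, then $B_1 \cap B_2 \subseteq (B_3 \cup B_4)$, forcing the four-element set $B_1\cap B_2$ to be covered by $B_3 \cap B_1$ and $B_4 \cap B_1$; a short case analysis on $|B_{123}|$, $|B_{124}|$, $|B_{1234}|$ (using Lemma~\ref{twoorthree} for the size-$5$ pairs and the triple-intersection bookkeeping as in Lemma~\ref{tripintersect}) should produce a subset of $C$ whose dependent elements sum to a single basis element — i.e.\ a quad — contradicting that $C$ is a cap, exactly as in the $(3,2,2)$ argument of Theorem~\ref{forbiddentriples}. (Alternatively, and perhaps more cleanly: if no such $a$ exists for the pair $\{1,2\}$, try the pairs $\{1,3\}$ and $\{1,4\}$; it suffices that the exchange works for \emph{one} of them, and the three failures together over-constrain the configuration to the point of forcing a quad.) Once the exchange element is found, the computation in the previous paragraph finishes the proof, and the explicit template $x_1 = a_1 + \cdots + a_7$, $x_2 = a_4 + \cdots + a_8$, $x_3 = a_1 + a_2 + a_6 + a_7 + a_8$ from Table~\ref{dim7-12cap-templates} can be recovered by choosing coordinates adapted to the intersection pattern.
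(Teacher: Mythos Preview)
Your overall strategy matches the paper's: reduce the $7\hyph5\hyph5\hyph5$ case to $5\hyph5\hyph5\hyph5$ via a single application of the Affine Basis Exchange Theorem, and your computation of the new sizes $|B'_1|, |B'_a|, |B'_3|, |B'_4|$ from Theorem~\ref{newcapbasis} is correct.

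The gap is in your existence argument for the exchange element $a \in B_{12} \setminus (B_3 \cup B_4)$. You defer this to ``a short case analysis on $|B_{123}|, |B_{124}|, |B_{1234}|$'' that ``should produce'' a quad, or alternatively to trying all three pairs $\{1,2\}, \{1,3\}, \{1,4\}$ and arguing that three simultaneous failures ``over-constrain the configuration.'' Neither of these is actually carried out, and the second option in particular would require nontrivial bookkeeping. Note also that Lemma~\ref{tripintersect}, which you invoke, applies only to bases of type $5\hyph5\hyph5$, not $7\hyph5\hyph5$, so it is not directly available here.

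The paper avoids this difficulty by first nailing down the extended type completely: applying Theorem~\ref{755implies443} to each of the three $11$-subcaps $B \cup \{x_1, x_j, x_k\}$ (with $j,k \in \{2,3,4\}$) forces $|B_{23}| = |B_{24}| = |B_{34}| = 3$. With all pairwise intersections known, one can write explicit coordinates for $x_1, x_2, x_3, x_4$ (this is the first template in Table~\ref{dim7-12cap-templates}), and then the element $a_4 \in B_{12} \setminus (B_3 \cup B_4)$ is simply read off. This is both shorter and more informative than an abstract counting argument, since it simultaneously produces the $7\hyph5\hyph5\hyph5$ template.
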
 

\begin{proof} 
    Let $C$ be an $11$-cap of dimension $7$, with basis $B$ and dependent set $D$. Since $|B| = 8$, by Lemma~\ref{morethan3} each element of $D$ is a sum of five or seven basis elements, and by Lemma~\ref{oneseven}, at most one of is a sum of seven basis elements. Therefore $B$ must have type $5 \hyph 5 \hyph 5 \hyph 5$ or $7 \hyph 5 \hyph 5 \hyph 5$. If the former, then the proof is complete. 
    
    Suppose $B$ has type $7 \hyph 5 \hyph 5 \hyph 5$. Let $D = \{x_1, x_2, x_3, x_4\}$. By Lemma~\ref{sevenfivefour} we have $|B_{12}| = 4$, so we can write $B = \{a_1, \ldots, a_8\}$ with
    \begin{align*}
        x_1 &= a_1 + \cdots + a_7, \\
        x_2 &= a_4 + \cdots + a_8.
    \end{align*}
    By the same lemma we know $|B_{13}| = |B_{14}| = 4$. Applying Theorem~\ref{755implies443} to the $7$-dimensional $10$-cap $C \setminus \{x_1\}$, we have that $|B_{23}| = |B_{24}| = |B_{34}| = 3$. Again, from the proof of Theorem~\ref{755is555} we have without loss of generality that
    \[
        x_3 = a_1 + a_2 + a_6 + a_7 + a_8
    \]
    and by a similar argument that, without loss of generality, 
    \[
        x_4 = a_1 + a_3 + a_5 + a_7 + a_8.
    \]
    Since $a_4 \in B_1 \cap B_2$ but $a_4 \notin B_3 \cup B_4$, we can apply the Affine Basis Exchange Theorem (\ref{newcapbasis}) to obtain the new basis
    \[
        B' = (B \setminus \{a_4\}) \cup \{x_2\},
    \]
    for $C$ with dependent set $D' = C \setminus B' = \{a_4, x_1, x_3, x_4\}$ and 
    \begin{align*}
       & B'_{1} = (B_{1} \symdif B_{2}) \cup \{x_2\} = \{x_2, a_1, a_2, a_3, a_8\}, \\
       & B'_{x_2} = (B_2 \setminus \{a_4\}) \cup \{x_2\} = \{x_2, a_5, a_6, a_7, a_8\}, \\
       & B'_{3} = B_{3}, \\
       & B'_{4} = B_{4}.
    \end{align*}
    Then $|B'_{1}| = |B'_{x_2}| = |B'_{3}| = |B'_{4}| = 5$, so $B'$ has type $5 \hyph 5 \hyph 5 \hyph 5$.
\end{proof}

By Lemma~\ref{twoorthree}, we know that for a $12$-cap with basis $B$ of type $5 \hyph 5 \hyph 5 \hyph 5$, each $|B_{ij}|$ equals $2$ or $3$. We now show that $|B_{ij}| = 2$ for at least one pair $\{i,j\}$.

\bigskip 

\begin{lemma}[Forbidden Quadruples] \label{forbiddenquadruples} 
    A $12$-cap of dimension $7$ cannot have a basis of extended type $5 \hyph 5 \hyph 5 \hyph 5 \hyph (3,3,3,3,3,3)$.
\end{lemma}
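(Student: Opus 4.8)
The plan is to argue by contradiction. Suppose $C$ is a $12$-cap of dimension $7$ possessing a basis $B = \{a_1, \ldots, a_8\}$ of extended type $5 \hyph 5 \hyph 5 \hyph 5 \hyph (3,3,3,3,3,3)$, with dependent set $D = \{x_1, x_2, x_3, x_4\}$; so $|B_i| = 5$ for each $i$ and $|B_{ij}| = 3$ for all six pairs $\{i,j\}$. I will show that the configuration is then rigid enough to force $x_1 + x_2 + x_3 + x_4 = \vec{0}$, so that $\{x_1, x_2, x_3, x_4\}$ is a quad in $C$ --- the contradiction.

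First I would determine the triple intersections. For each $\ell \in \{1,2,3,4\}$, the set $C \setminus \{x_\ell\}$ still contains $B$, hence has the same affine span and is an $11$-cap of dimension $7$ with basis $B$ of type $5 \hyph 5 \hyph 5$ and all pairwise intersection cardinalities still equal to $3$. Corollary~\ref{triples}(2) then gives that its triple intersection has size $2$; ranging over $\ell$, this shows $|B_{ijk}| = 2$ for every $3$-subset $\{i,j,k\}$ of $\{1,2,3,4\}$.

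Next I would apply inclusion--exclusion to $B_1, B_2, B_3, B_4$: with $\sum_i |B_i| = 20$, $\sum_{i<j} |B_{ij}| = 18$, and $\sum_{i<j<k} |B_{ijk}| = 8$, we get $|B_1 \cup B_2 \cup B_3 \cup B_4| = 10 - |B_{1234}|$. Since this union lies in $B$ and $|B| = 8$, we get $|B_{1234}| \geq 2$; and since $B_{1234} \subseteq B_{123}$ and $|B_{123}| = 2$, we get $|B_{1234}| = 2$. Consequently $B_{1234}$, $B_{123}$, $B_{124}$, $B_{134}$, and $B_{234}$ are all equal to one common $2$-element subset of $B$.

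Finally I would finish with a parity count. Writing $d_a$ for the number of the sets $B_1, \ldots, B_4$ that contain a basis element $a$, the two elements of the common $2$-set have $d_a = 4$; no element has $d_a = 3$, since that would place it in exactly one triple intersection $B_{ijk}$, but every $B_{ijk}$ equals the common $2$-set and so lies in all four $B_i$. Thus the other six basis elements have $d_a \leq 2$, and since $\sum_{a \in B} d_a = \sum_i |B_i| = 20$, each of these six has $d_a = 2$. Every $d_a$ is therefore even, so $x_1 + x_2 + x_3 + x_4 = \sum_{a \in B} d_a\, a = \vec{0}$ in $\Z_2^n$, a quad in $C$. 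The only step needing genuine care is the first: checking that deleting a dependent element from $C$ really yields an $11$-cap of the same dimension with an unchanged type-$5 \hyph 5 \hyph 5$ structure, so that Corollary~\ref{triples} applies; everything after that is inclusion--exclusion and counting modulo $2$.
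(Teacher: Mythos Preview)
Your argument is correct and follows the same overall strategy as the paper: use Corollary~\ref{triples} on the $11$-subcaps to get $|B_{ijk}|=2$ for every triple, then inclusion--exclusion to force $|B_{1234}|=2$, and conclude that $x_1+x_2+x_3+x_4=\vec{0}$.

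The difference lies in how the last implication is carried out. The paper fixes explicit coordinates for $x_1,x_2,x_3$ via the template from Theorem~\ref{twotypes11caps}, invokes Lemma~\ref{sumiseight} to place $a_5,a_7,a_8$ in $B_4$, and then reads off $B_4$ and checks $x_1+x_2+x_3+x_4=\vec{0}$ directly. Your degree-count is coordinate-free: from $B_{1234}=B_{ijk}$ for every triple you rule out $d_a=3$, and then $\sum d_a=20$ forces $d_a\in\{2,4\}$ for all $a$, giving the vanishing sum by parity. This is a bit cleaner in that it sidesteps the template and Lemma~\ref{sumiseight}; the paper's version, on the other hand, makes the unique forced $x_4$ visible explicitly. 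Either way the content is the same.
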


\begin{proof}
    Let $C$ be a $12$-cap of dimension $7$ with basis $B$ of type $5 \hyph 5 \hyph 5 \hyph 5$ and dependent set $D = \{x_1, x_2, x_3, x_4\}$. Suppose for the sake of contradiction that $|B_{ij}| = 3$ for each pair $\{i,j\}$. Then by the proof of Theorem~\ref{twotypes11caps} applied to the $7$-dimensional $11$-cap $C \setminus \{x_4\}$, we can write $B = \{a_1, \ldots, a_8\}$ such that
    \begin{align*}
        x_1 &= a_1 + a_2 + a_3 + a_4 + a_5, \\ 
        x_2 &= a_1 + a_2 + a_3 + a_6 + a_7, \\ 
        x_3 &= a_1 + a_2 + a_4 + a_6 + a_8.
    \end{align*} 
    Now we show that with $x_1, x_2, x_3$ given as above, the element $x_4$ is completely determined as a sum of five basis elements. 
    
    First, by Lemma~\ref{sumiseight}, we know that $|B_i \cup B_j \cup B_k| = 8$ for each triple $\{i,j,k\}$. Since $a_8 \notin B_1 \cup B_2$, it follows that $a_8 \in B_4$. Similarly, $a_5, a_7 \in B_4$.

    By Corollary~\ref{triples}, we know that $|B_{ijk}| = 2$ for each triple $\{i,j,k\}$.
    Using the inclusion-exclusion principle, we compute 
    \begin{align*}
        |B_{1234}|
        &= \sum |B_i| - \sum |B_{ij}| + \sum |B_{ijk}|
        - |B_1 \cup B_2 \cup B_3 \cup B_4| \\
        &= 20 - 3 \cdot \binom{4}{2} + 2 \cdot \binom{4}{3} - 8 \\
        &= 2.
    \end{align*}
    Since $B_{1234} \subseteq B_{123}$ and $|B_{123}| = |B_{1234}| = 2$, we have
    \[
        B_{1234} = B_{123} = \{a_1, a_2\}.
    \]
    Then $B_4 = \{a_1, a_2, a_5, a_7, a_8\}$ so
    \[
        x_4 = a_1 + a_2 + a_5 + a_7 + a_8.
    \]
    But then $x_1 + x_2 + x_3 + x_4 = 0$, which implies that $C$ contains a quad, which contradicts the fact that $C$ is a cap.
\end{proof}

\bigskip

\begin{proposition} \label{two12caps}
Let $C$ be a $12$-cap of dimension $7$. Then $C$ has a basis $B$ of type $5 \hyph 5 \hyph 5 \hyph 5$, and for any such basis the numbers $|B_{ij}|$ must satisfy the following.
\begin{enumerate}
    \item Exactly one or two of these numbers are $2$ and the rest are $3$.
    \item If $|B_{ij}| = |B_{k\ell}| = 2$, then $\{i,j\}$ and $\{k,\ell\}$ are either equal or disjoint.
\end{enumerate}
\end{proposition}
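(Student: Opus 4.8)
The plan is to establish the two claims in sequence, relying heavily on the inclusion–exclusion bookkeeping already developed for type $5\hyph 5\hyph 5$ bases (Lemma~\ref{tripintersect} and Corollary~\ref{triples}) together with the ``Forbidden Quadruples'' result (Lemma~\ref{forbiddenquadruples}) and the classification of $11$-caps. First, Theorem~\ref{7555implies5555} already guarantees the existence of a basis $B$ of type $5\hyph 5\hyph 5\hyph 5$, and Lemma~\ref{twoorthree} tells us each $|B_{ij}| \in \{2,3\}$, so there are six quantities $|B_{12}|, |B_{13}|, \ldots, |B_{34}|$ each equal to $2$ or $3$. For part (1), I would argue that the number of pairs with $|B_{ij}|=2$ is neither $0$ nor more than $2$. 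The case of zero such pairs is exactly Lemma~\ref{forbiddenquadruples}. To rule out three or more pairs with value $2$, the key tool is the $11$-cap restriction: for any index $k$, the set $C \setminus \{x_k\}$ is an $11$-cap of dimension $7$ with basis $B$ (of type $5\hyph 5\hyph 5$ on the remaining three dependent elements), and by Theorem~\ref{forbiddentriples} at most one of the three pairs among $\{1,2,3,4\}\setminus\{k\}$ can have $|B_{ij}|=2$. So each of the four triples of indices contains at most one ``$2$-pair.'' A short combinatorial argument then shows that a collection of $2$-pairs among the four indices with the property that every $3$-element subset contains at most one of them must itself have size at most $2$ (three or more edges on four vertices always force some triangle or path of length two to contain two edges, i.e.\ some triple containing two of them). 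This simultaneously proves part (2): if $\{i,j\}$ and $\{k,\ell\}$ are both $2$-pairs and share exactly one index, say $j=k$, then the triple $\{i,j,\ell\}$ contains both, contradicting the $11$-cap bound; hence two $2$-pairs are either equal or disjoint.

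For the ``for any such basis'' phrasing, I would note that the argument above only used that $B$ has type $5\hyph 5\hyph 5\hyph 5$ and the cap property, so it applies verbatim to every type-$5\hyph 5\hyph 5\hyph 5$ basis, not just the one produced by Theorem~\ref{7555implies5555}. The main obstacle I anticipate is making the ``every triple contains at most one $2$-pair $\Rightarrow$ at most two $2$-pairs'' step airtight: this is a small graph-theoretic lemma (view $2$-pairs as edges of a graph $G$ on $4$ vertices; the hypothesis says $G$ has no path on three vertices, i.e.\ maximum degree $\le 1$, i.e.\ $G$ is a matching), and once phrased that way both conclusions drop out immediately — a matching on four vertices has at most two edges, and two edges of a matching are disjoint, while a single edge together with the empty case gives the ``exactly one or two'' count (the zero case being excluded by Lemma~\ref{forbiddenquadruples}). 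I would also double-check that the $11$-cap $C\setminus\{x_k\}$ genuinely has $B$ as a type-$5\hyph 5\hyph 5$ basis with the inherited intersection numbers $|B_{ij}|$ for $i,j \neq k$ — this is immediate since removing a dependent element does not change how the other dependent elements are expressed in terms of $B$.

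In summary, the proof has three ingredients: (i) existence of the $5\hyph 5\hyph 5\hyph 5$ basis from Theorem~\ref{7555implies5555}; (ii) the local constraint that restricting to any $11$-subcap forces, via Theorem~\ref{forbiddentriples}, at most one $2$-pair per triple of dependent indices; and (iii) the elementary observation that this local constraint means the $2$-pairs form a matching on four vertices, which instantly yields both ``at most two $2$-pairs'' and ``any two $2$-pairs are equal or disjoint,'' with the ``at least one $2$-pair'' half coming from Lemma~\ref{forbiddenquadruples}.
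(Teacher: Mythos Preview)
Your proposal is correct and follows essentially the same approach as the paper: invoke Theorem~\ref{7555implies5555} for the existence of a type $5\hyph 5\hyph 5\hyph 5$ basis, use Lemma~\ref{twoorthree} and Lemma~\ref{forbiddenquadruples} to bound the number of $2$-pairs from below, and then apply Theorem~\ref{forbiddentriples} to each $11$-subcap $C \setminus \{x_k\}$ to force at most one $2$-pair in every triple of indices, from which both conclusions follow. Your graph-theoretic packaging (the $2$-pairs form a matching on four vertices) is a clean way to phrase the final combinatorial step, but it is the same argument the paper gives; note also that Lemma~\ref{tripintersect} and Corollary~\ref{triples} are not actually needed here, despite your mention of them in the plan.
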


\begin{proof}
    Let $C$ be a $12$-cap. By Theorem~\ref{7555implies5555}, we know that $C$ has a basis $B$ of type $5 \hyph 5 \hyph 5 \hyph 5$. Let $D = \{x_1, x_2, x_3, x_4\}$. By Lemma~\ref{twoorthree} we know that $|B_{ij}|$ equals $2$ or $3$ for all pairs $\{i,j\}$, and by Lemma~\ref{forbiddenquadruples} they cannot all be $3$.
    
    For any triple $\{i,j,k\}$, the subset $C' = B \cup \{x_i, x_j, x_k\}$ of $C$ is a $7$-dimensional $11$-cap with basis $B$ of type $5 \hyph 5 \hyph 5$. Then by Theorem~\ref{forbiddentriples}, at most one of $|B_{ij}|$, $|B_{ik}|$, and $|B_{jk}|$ equals $2$. It follows that of the six numbers $|B_{ij}|$, where $\{i,j\} \subseteq \{1,2,3,4\}$, only one or two of them can be $2$ and the rest must be $3$. Furthermore, if two of these numbers are $2$, then the two pairs $\{i,j\}$ and $\{k,\ell\}$ for which $|B_{ij}| = |B_{k\ell}| = 2$ must be disjoint pairs, for the following reason. If two of $i,j,k,\ell$ are equal, say $j = k$, then in the triple $\{i,j,\ell\}$ we have two pairs $\{i,j\}$ and $\{j,\ell\}$ with $|B_{ij}| = |B_{j\ell}| = 2$. This is impossible, as described above.
    
    Therefore either $|B_{ij}| = 2$ for exactly one pair $\{i,j\} \subseteq \{1,2,3,4\}$, or else $|B_{ij}| = |B_{k\ell}| = 2$ for exactly two pairs $\{i,j\}$ and $\{k,\ell\}$, and these pairs are disjoint.
\end{proof}
    
\bigskip

\begin{theorem} \label{two12caps-v2}
    Let $C \subseteq \Z_2^n$ be a subset of size $12$ and dimension $7$. Then $C$ is a cap if and only if $C$ has a basis of extended type $5 \hyph 5 \hyph 5 \hyph 5 \hyph (2,3,3,3,3,3)$ or $5 \hyph 5 \hyph 5 \hyph 5 \hyph (2,3,3,3,3,2)$, up to a permutation of dependent elements.
\end{theorem}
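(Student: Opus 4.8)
\emph{Plan and forward direction.} The plan is to prove both implications, relying on Theorem~\ref{7555implies5555} and Proposition~\ref{two12caps} for the forward direction and on the sub-cap structure (Theorems~\ref{size10dim7} and~\ref{twotypes11caps}) for the converse. So suppose first that $C$ is a $12$-cap of dimension $7$. By Theorem~\ref{7555implies5555}, $C$ has a basis $B$ of type $5\hyph5\hyph5\hyph5$, and since $|B_i| = 5$ for all $i$, the extended type of $B$ is determined by the six numbers $|B_{ij}|$. By Proposition~\ref{two12caps}, either exactly one of these numbers is $2$ and the rest are $3$, or exactly two of them are $2$ with the corresponding pairs disjoint and the rest $3$. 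Relabeling the dependent elements $x_1,x_2,x_3,x_4$ permutes the six positions in the extended type, so in the first case we may move the unique pair with $|B_{ij}| = 2$ to the pair $\{1,2\}$, yielding extended type $5\hyph5\hyph5\hyph5\hyph(2,3,3,3,3,3)$, and in the second case, since the two pairs with value $2$ are disjoint, we may move them to $\{1,2\}$ and $\{3,4\}$, yielding extended type $5\hyph5\hyph5\hyph5\hyph(2,3,3,3,3,2)$.

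\emph{Converse, the easy cases.} Now suppose $C = B \cup \{x_1,x_2,x_3,x_4\}$, where $B$ has one of the two stated extended types; we must show $C$ contains no quad. A quad using none of the $x_i$ is impossible since $B$ is independent, and a quad using exactly one $x_i$ would write that $x_i$ as a sum of three basis elements, contradicting Lemma~\ref{morethan3}. For a quad using two dependent elements $x_i, x_j$: the set $B \cup \{x_i,x_j\}$ has size $10$, dimension $7$, and a basis $B$ of extended type $5\hyph5\hyph(|B_{ij}|)$ with $|B_{ij}| \in \{2,3\}$ (read off the stated extended type), so by Theorem~\ref{size10dim7} it is a cap and has no quad. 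For a quad using three dependent elements $x_i,x_j,x_k$: every triple $\{i,j,k\}$ contains at most one pair of value $2$ (in the first case because there is only one such pair at all, in the second because the two pairs of value $2$ are disjoint), so $B \cup \{x_i,x_j,x_k\}$ has size $11$, dimension $7$, and a basis $B$ of extended type $5\hyph5\hyph5\hyph(3,3,3)$ or $5\hyph5\hyph5\hyph(3,3,2)$ (up to reordering the triple), which by Theorem~\ref{twotypes11caps} is a cap.

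\emph{The last case and the main obstacle.} The only remaining possibility is a quad consisting of all four dependent elements, namely $x_1 + x_2 + x_3 + x_4 = \vec{0}$, and ruling this out is the crux. I would pin down the multiplicities with which basis elements occur among $B_1,B_2,B_3,B_4$: apply Corollary~\ref{triples} to each triple to get $|B_{ijk}|$, use $|B_1 \cup B_2 \cup B_3 \cup B_4| \le 8$ together with inclusion-exclusion to force $|B_{1234}|$, and then solve the resulting linear system for the numbers $n_k$ of basis elements lying in exactly $k$ of the $B_i$. In both cases this gives $n_0 = n_1 = 0$ with $n_3 > 0$ — specifically $(n_0,n_1,n_2,n_3,n_4) = (0,0,5,2,1)$ in the first and $(0,0,4,4,0)$ in the second — so $x_1 + x_2 + x_3 + x_4$ is a nonempty (hence nonzero) sum of distinct basis elements, and the quad cannot exist; thus $C$ is a cap. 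The same bookkeeping, carried a step further, shows that the extended type forces the dependent set into the template listed for it in Table~\ref{dim7-12cap-templates}, so that Lemma~\ref{same-template} will later give affine equivalence of all caps of a fixed extended type. This final counting, analogous to but heavier than the argument in Lemma~\ref{forbiddenquadruples}, is where I expect the real work to be.
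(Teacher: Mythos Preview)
Your proof is correct, and the forward direction together with the reduction of the converse to the single relation $x_1+x_2+x_3+x_4=\vec{0}$ matches the paper almost verbatim. The difference lies in how that last relation is excluded. The paper works constructively: it fixes the template for $x_1,x_2,x_3$ coming from the $11$-cap case, computes $|B_{1234}|$ by inclusion--exclusion, and then pins down $B_4$ element by element from the containments $B_{1234}\subseteq B_{ij4}\subseteq B_{ij}$, obtaining explicit formulas for $x_4$ and checking $x_1+x_2+x_3+x_4$ directly. Your route is cleaner for the bare statement: you solve the linear system for the multiplicity profile $(n_0,\ldots,n_4)$ and observe that $n_1+n_3>0$, so the sum of the four dependent elements has nonzero support in $B$. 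This avoids all labeling choices and ``without loss of generality'' steps. One small wording fix: you write $|B_1\cup\cdots\cup B_4|\le 8$, but in fact equality is what you need and what you already have, since each triple $C_{ijk}$ is a cap and Lemma~\ref{sumiseight} gives $|B_i\cup B_j\cup B_k|=8$; with only an inequality the value of $|B_{1234}|$ would not be determined. The price of your shortcut is that Corollary~\ref{one12cap} still requires the explicit templates of Table~\ref{dim7-12cap-templates}, so the element-by-element construction the paper does here cannot be skipped entirely --- you correctly flag this, but be aware that it is not merely ``the same bookkeeping carried a step further'': it needs the chain of containments used in the paper, not just the aggregate multiplicities.
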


\bigskip

Note that the extended types mentioned in the theorem above are particular cases of the two possibilities described in Proposition~\ref{two12caps}. For the first extended type, we have $B_{ij} = B_{k\ell} = B_{12}$, and for the second we have $B_{ij} = B_{12}$ and $B_{k\ell} = B_{34}$.

\bigskip

\begin{proof}
    Let $C$ be as in the theorem statement. One direction of the theorem follows immediately from Theorem~\ref{two12caps}, by reordering the dependent elements if necessary.
    
For the other direction, suppose that $C$ has a basis $B$ of extended type $5 \hyph 5 \hyph 5 \hyph 5 \hyph (2,3,3,3,3,3)$ or $5 \hyph 5 \hyph 5 \hyph 5 \hyph (2,3,3,3,3,3,2)$. Let $D = \{x_1, x_2, x_3, x_4\}$.
    For any triple $\{i,j,k\}$, the set $C_{ijk} = B \cup \{x_i, x_j, x_k\}$ has a basis $B$ of extended type $5 \hyph 5 \hyph 5 \hyph (3,3,3)$ or $5 \hyph 5 \hyph 5 \hyph (3,3,2)$, up to reordering the dependent elements, so by Theorem~\ref{twotypes11caps} we know $C_{ijk}$ is a cap. Then by Lemma~\ref{tripintersect}, we also have
    \begin{equation} \label{eq:ijk}
        |B_{ijk}| = |B_{ij}| + |B_{ik}| + |B_{jk}| - 7.
    \end{equation}
    Since the set $C_{123} = B \cup \{x_1, x_3, x_2\}$ is a $7$-dimensional $11$-cap with basis $B$ of type $5 \hyph 5 \hyph 5 \hyph (3,3,2)$ --- (note the ordering of dependent elements) --- by Table~\ref{dim7-11cap-templates} we can write $B = \{a_1,\ldots,a_8\}$ such that
    \begin{align*}
        x_1 &= a_1 + \cdots + a_5, \\
        x_3 &= a_3 + \cdots + a_7, \\
        x_2 &= a_1 + a_2 + a_3 + a_7 + a_8.
    \end{align*}
    Finally, note that Lemma~\ref{sumiseight} implies that $|B_1 \cup B_2 \cup B_3 \cup B_4| = 8$.

    We now consider the two extended type possibilities separately.

    \begin{description}
    \item[Case 1.]
    Suppose that $B$ is a basis for $C$ of extended type $5 \hyph 5 \hyph 5 \hyph 5 \hyph (2,3,3,3,3,3)$.
    By equation~\ref{eq:ijk}, we have $|B_{123}| = |B_{124}| = (2+3+3) - 7 = 1$ and $|B_{ijk}| = (3+3+3) - 7 = 2$ for all other triples $\{i,j,k\}$.
    To determine a formula for $x_4$, we first compute $|B_{1234}|$ using inclusion-exclusion. We know $|B_{12}|=2$ and $|B_{ij}|=3$ for all other pairs $\{i,j\}$, and that $|B_{ijk}| = 2$ for all triples $\{i,j,k\}$ except that $|B_{123}| = |B_{234}| = 1$. So
    \begin{align*}
        |B_{1234}|
        &= \sum |B_i| - \sum |B_{ij}| + \sum |B_{ijk}| - |B_1 \cup B_2 \cup B_3 \cup B_4| \\
        &= 4(5) - (3+3+2+3+3+3) + (1+1+2+2) - 8 \\
        &= 1.
    \end{align*}
    Now we can construct a formula for $x_4$ in the following steps.
    \begin{itemize}
        \item Since $B_{1234} \subseteq B_{123}$ and $|B_{1234}| = |B_{123}| = 1$, it follows that $B_{1234} = B_{123} = \{a_3\}$. So $B_4$ must contain $a_3$.
        
        \item Since $B_{124} \subseteq B_{12} = \{a_1, a_2, a_3\}$ and $|B_{124}| = 2$ and $a_3 \in B_4$, it follows that $a_1$ or $a_2$ is in $B_4$, but not both. Without loss of generality, assume $a_2 \in B_4$ and $a_1 \notin B_4$.
        
        \item Since $B_{134} \subseteq B_{13} = \{a_3, a_4, a_5\}$ and $|B_{134}| = 2$ and $a_3 \in B_4$, either $a_4$ or $a_5$ in in $B_4$, but not both. Without loss of generality, assume $a_4 \in B_4$ and $a_5 \notin B_4$
        
        \item Since $B_{234} \subseteq B_{23} = \{a_3, a_7\}$ and $|B_{234}| = 1$ and $a_3 \in B_4$, it follows that $a_7 \notin B_4$.
        
        \item By the previous bullet points, we have $a_2, a_3, a_4 \in B_4$ and $a_1, a_5, a_7 \notin B_4$. Since $|B_4| = 5$, it follows that $a_6, a_8 \in B_4$.

    \end{itemize}
    Therefore we can write
    \[
        x_4 = a_2 + a_3 + a_4 + a_6 + a_8.
    \]

    Now we show that $C$ is a cap by checking that there are no affine dependence relations among the elements of $C$ involving precisely four elements. All affine dependence relations in $C$ are the result of linear combinations of the dependent elements. But since each set $C_{ijk} = B \cup \{x_i, x_j, x_k\}$ is a cap, we already know that the affine dependence relations generated by just three dependent elements at a time cannot involve precisely four elements. The only one left to check is
    \[
        x_1 + x_2 + x_3 + x_4
        = a_2 + a_4,
    \]
    which involved six elements. It follows that $C$ does not contain a quad, hence it is a cap.

    \item[Case 2.]
    (This argument is similar to that for Case 1, with a few crucial differences.) Suppose that $B$ is a basis for $C$ of extended type $5 \hyph 5 \hyph 5 \hyph 5 \hyph (2,3,3,3,3,2)$. Then $|B_{12}| = |B_{34}| = 2$, so  equation~\eqref{eq:ijk} implies that $|B_{ijk}| = (2+3+3) - 7 = 1$ for all triples $\{i,j,k\}$.
    To determine a formula for $x_4$, we first compute $|B_{1234}|$ using inclusion-exclusion. We know $|B_{12}|=|B_{34}|=2$ and $|B_{ij}|=3$ for all other pairs $\{i,j\}$, and that $|B_{ijk}| = 2$ for all triples $\{i,j,k\}$. So
    \begin{align*}
        |B_{1234}|
        &= \sum |B_i| - \sum |B_{ij}| + \sum |B_{ijk}| - |B_1 \cup B_2 \cup B_3 \cup B_4| \\
        &= 4(5) - (2+3+3+3+3+2) + (1+1+1+1) - 8 \\
        &= 0.
    \end{align*}
    Now we can construct a formula for $x_4$ in the following steps.
    \begin{itemize}
        \item Since $B_{1234} \subseteq B_{123} = \{a_3\}$ and $|B_{1234}| = 0$, it follows that $a_3 \notin B_4$.
        \item Since $B_{124} \subseteq B_{12} = \{a_1, a_2, a_3\}$ and $|B_{124}| = 1$ and $a_3 \notin B_4$, it follows that $a_1$ or $a_2$ is in $B_4$, but not both. Without loss of generality, assume $a_2 \in B_4$ and $a_1 \notin B_4$.
        
        \item Since $B_{134} \subseteq B_{13} = \{a_3, a_4, a_5\}$ and $|B_{134}| = 1$ and $a_3 \notin B_4$, it follows that $a_4$ or $a_5$ is in $B_4$, but not both. Without loss of generality, assume $a_4 \in B_4$ and $a_5 \notin B_4$
        
        \item Since $B_{234} \subseteq B_{23} = \{a_3, a_7\}$ and $|B_{234}| = 1$ and $a_3 \notin B_4$. it follows that $a_7 \in B_4$.
        
        \item By the previous bullet points, we have $a_2, a_4, a_7 \in B_4$ and $a_1, a_3, a_5 \notin B_4$. Since $|B_4| = 5$, it follows that $a_6, a_8 \in B_4$.

    \end{itemize}
    Therefore we can write
    \[
        x_4 = a_2 + a_4 + a_6 + a_7 + a_8.
    \]

    Now we show that $C$ is a cap by checking that there are no affine dependence relations among the elements of $C$ involving precisely four elements. All affine dependence relations in $C$ are the result of linear combinations of the dependent elements. But since each set $C_{ijk} = B \cup \{x_i, x_j, x_k\}$ is a cap, we already know that the affine dependence relations generated by just three dependent elements at a time cannot involve precisely four elements. The only one left to check is
    \[
        x_1 + x_2 + x_3 + x_4
        = a_2 + a_3 + a_4 + a_7,
    \]
    which involves eight elements. It follows that $C$ does not contain a quad, hence it is a cap.
    \end{description}
\end{proof}

The next corollary gives us the cardinality of $B_{1234}$ for any four sums of five in a cap.

\bigskip

\begin{corollary} \label{quads}
    Let $C$ be a cap of dimension $7$ with basis $B$.  Let $x_1, x_2, x_3, x_4 \in D$ each be sums of five basis elements. Then
    \begin{enumerate}
        \item If $|B_{ij}| = 2$ for exactly one pair $\{i,j\}$  then $|B_{1234}| = 1$.
        \item If $|B_{ij}| = 2$ for two disjoint pairs $\{i,j\}$  then $|B_{1234}| = 0$.
    \end{enumerate} 
\end{corollary}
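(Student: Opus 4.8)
The plan is to reduce the claim about an arbitrary cap $C$ of dimension $7$ to the $12$-cap situation already analyzed in Theorem~\ref{two12caps-v2}, or to argue directly via inclusion-exclusion. Given $x_1,x_2,x_3,x_4 \in D$ each a sum of five basis elements, I would first consider the subset $C' = B \cup \{x_1,x_2,x_3,x_4\}$. This $C'$ is a subset of the cap $C$, hence itself a cap, and it has dimension $7$ (since $B \subseteq C'$ and $|B| = 8$) and size $12$. So $C'$ is a $12$-cap of dimension $7$ with basis $B$ of type $5\hyph 5\hyph 5\hyph 5$, and Proposition~\ref{two12caps} and Theorem~\ref{two12caps-v2} apply verbatim to $C'$. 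In particular, the six numbers $|B_{ij}|$ for $\{i,j\} \subseteq \{1,2,3,4\}$ are constrained: exactly one or two of them equal $2$, the rest equal $3$, and if two equal $2$ then the corresponding pairs are disjoint. This exactly matches the two cases in the corollary statement, so the hypotheses are consistent and the only thing left is to pin down $|B_{1234}|$ in each case.

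For the computation I would use inclusion-exclusion on $B_1 \cup B_2 \cup B_3 \cup B_4$, exactly as in the proofs of Lemma~\ref{forbiddenquadruples} and Theorem~\ref{two12caps-v2}. Lemma~\ref{sumiseight} gives $|B_1 \cup B_2 \cup B_3 \cup B_4| = 8$ (each triple already has union of size $8$, so the four-fold union does too), and $\sum_i |B_i| = 20$. For the triples, each set $C_{ijk} = B \cup \{x_i,x_j,x_k\}$ is a $7$-dimensional $11$-cap with basis $B$ of type $5\hyph 5\hyph 5$, so Lemma~\ref{tripintersect} (equivalently Corollary~\ref{triples}) gives $|B_{ijk}| = |B_{ij}| + |B_{ik}| + |B_{jk}| - 7$. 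In Case 1, one pair — say $\{1,2\}$ — has $|B_{12}| = 2$ and all other $|B_{ij}| = 3$, so the two triples containing $\{1,2\}$ have $|B_{ijk}| = 1$ and the other two have $|B_{ijk}| = 2$, giving $\sum |B_{ijk}| = 1+1+2+2 = 6$; then $8 = 20 - (2 + 5\cdot 3) + 6 + |B_{1234}| = 20 - 17 + 6 + |B_{1234}|$, so $|B_{1234}| = 1$ as in the proof of Theorem~\ref{two12caps-v2}. In Case 2, two disjoint pairs — say $\{1,2\}$ and $\{3,4\}$ — have $|B_{ij}| = 2$ and the other four have $|B_{ij}| = 3$; then every triple contains exactly one of these two pairs, so $|B_{ijk}| = 1$ for all four triples and $\sum |B_{ijk}| = 4$, giving $8 = 20 - (4 + 4\cdot 3) + 4 + |B_{1234}| = 20 - 16 + 4 + |B_{1234}|$, so $|B_{1234}| = 0$.

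There is essentially no obstacle here: the corollary is a bookkeeping consequence of results already established for $12$-caps. The only point requiring a moment's care is the appeal to Proposition~\ref{two12caps} and Theorem~\ref{two12caps-v2} — one must observe that these were proved for arbitrary $12$-caps of dimension $7$, and that $C'$ qualifies, so that the constraints on the $|B_{ij}|$ are automatic rather than an additional hypothesis. After that, the two inclusion-exclusion counts are immediate and identical to computations already carried out in the proofs of Lemma~\ref{forbiddenquadruples} and Theorem~\ref{two12caps-v2}, so I would simply state them and cite those proofs rather than repeat the arithmetic in full.
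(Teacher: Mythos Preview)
Your approach is exactly the one the paper intends: the corollary is stated without a separate proof precisely because the two inclusion-exclusion computations were already carried out verbatim in the proof of Theorem~\ref{two12caps-v2}, and you have correctly identified that passing to the sub-cap $C' = B \cup \{x_1,x_2,x_3,x_4\}$ is what makes those computations apply to an arbitrary cap. One small slip: in your inclusion-exclusion equations you wrote $+|B_{1234}|$ where it should be $-|B_{1234}|$ (since $|\bigcup B_i| = \sum|B_i| - \sum|B_{ij}| + \sum|B_{ijk}| - |B_{1234}|$); in Case~1 your equation as written gives $-1$, not $1$, so fix the sign before writing this up.
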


\bigskip

We can now show that an $11$-cap cannot be complete.

\bigskip 

\begin{corollary} \label{11notcomplete}
    No $11$-cap is complete.
\end{corollary}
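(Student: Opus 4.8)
The plan is to mirror the proof of Corollary~\ref{10capdim7incomplete}: handle dimension $7$ by exhibiting an explicit one-point extension to a $12$-cap, and handle all larger dimensions by a crude size bound on the first quad closure. Let $C$ be an $11$-cap. By Proposition~\ref{capsinz6}, a cap of dimension at most $6$ has at most $9$ points, so necessarily $\dim(C) \geq 7$.

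Suppose first that $\dim(C) = 7$. By Proposition~\ref{just555332}, $C$ has a basis of extended type $5 \hyph 5 \hyph 5 \hyph (3,3,2)$, and by the template in Table~\ref{dim7-11cap-templates} I may label the basis $B = \{a_1, \ldots, a_8\}$ so that the dependent set $D = \{x_1, x_2, x_3\}$ satisfies
\begin{align*}
    x_1 &= a_1 + \cdots + a_5, \\
    x_2 &= a_3 + \cdots + a_7, \\
    x_3 &= a_1 + a_2 + a_3 + a_7 + a_8.
\end{align*}
I then set $x_4 = a_2 + a_3 + a_4 + a_6 + a_8$, which is exactly the point produced in Case~1 of the proof of Theorem~\ref{two12caps-v2}. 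Since $x_4$ is an affine combination of basis elements it lies in $\aff(C)$, and comparing the five-element set $B_{x_4}$ with $B_{x_1}$, $B_{x_2}$, $B_{x_3}$ shows $x_4 \notin C$. Hence $C' = C \cup \{x_4\}$ is a $12$-element subset of $\aff(C)$ of dimension $7$ with basis $B$. A direct computation of the six pairwise intersections $|B_{ij}|$ among the four dependent elements of $C'$ yields exactly one value equal to $2$ and the rest equal to $3$, so after reordering the dependent elements $B$ has extended type $5 \hyph 5 \hyph 5 \hyph 5 \hyph (2,3,3,3,3,3)$ for $C'$; by Theorem~\ref{two12caps-v2} this makes $C'$ a cap. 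Thus $C$ is properly contained in a cap lying inside $\aff(C)$, so $C$ is not complete.

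If instead $\dim(C) \geq 8$, then $|\aff(C)| = 2^{\dim(C)} \geq 256$, whereas the first quad closure satisfies
\[
    |\qc_1(C)| \leq |C| + \binom{|C|}{3} = 11 + \binom{11}{3} = 176 < 256,
\]
so $\qc_1(C) \neq \aff(C)$ and $C$ is not complete. The only step requiring care is the bookkeeping in the dimension-$7$ case --- checking that $x_4$ is a genuinely new point and that the resulting extended type is one of the two admissible $12$-cap types of Theorem~\ref{two12caps-v2}, rather than a forbidden type ruled out by Proposition~\ref{two12caps} --- but this reduces to a short verification with the template above. Everything else is a direct appeal to results already in hand.
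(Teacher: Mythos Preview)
Your proof is correct and follows essentially the same approach as the paper's own proof: normalize the $11$-cap to the $5\hyph 5\hyph 5\hyph(3,3,2)$ template, adjoin the specific point $x_4 = a_2 + a_3 + a_4 + a_6 + a_8$ from the $12$-cap template, invoke Theorem~\ref{two12caps-v2}, and dispatch dimensions $\geq 8$ by the quad-closure count. Your labeling (taking $x_3 = a_1 + a_2 + a_3 + a_7 + a_8$ from Table~\ref{dim7-11cap-templates}) is in fact cleaner than the paper's, which writes $x_3 = a_1 + a_2 + a_3 + a_6 + a_8$ in the proof of Corollary~\ref{11notcomplete}---a typo, since that choice would give $|B_3 \cap B_4| = 4$, violating Lemma~\ref{twoorthree}.
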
 

\begin{proof}
    Let $C$ be an $11$-cap. By Lemma~\ref{no10in6}, we know $\dim(C) \geq 7$. Suppose $\dim(C) = 7$. Then by Proposition~\ref{just555332} we know $C$ has a basis $B$ of extended type $5 \hyph 5 \hyph 5 \hyph (3,3,2)$. By Table~\ref{dim7-11cap-templates} we can write $B = \{a_1, \ldots, a_8\}$ and $D = \{x_1, x_2, x_3\}$ where
    \begin{align*}
        x_1 &= a_1 + \cdots + a_5, \\
        x_2 &= a_3 + \cdots + a_7, \\
        x_3 &= a_1 + a_2 + a_3 + a_6 + a_8.
    \end{align*}
    Let $x_4 = a_2 + a_3 + a_4 + a_6 + a_8$. Since this is an affine combination of elements of $C$, we know that $x_4 \in \aff(C)$. Then $C' = C \cup \{x_4\} = B \cup \{x_1, x_3, x_2, x_4 \}$ is a subset of $\aff(C)$ with basis $B$ of extended type $5 \hyph 5 \hyph 5 \hyph 5 \hyph (2,3,3,3,3,3)$, using the order $x_1, x_3, x_2, x_4$ of dependent elements. By Theorem~\ref{two12caps-v2}, it follows that $C'$ is a $12$-cap.
    Therefore the cap $C$ can be extended to a larger cap of the same dimension, so it cannot be complete.

    \bigskip

    Now suppose that $\dim(C) > 7$. Note that the first quad closure of $C$ has at most
    \[
        |C| + \binom{|C|}{3}
        = 11 + \binom{11}{3}
        = 176
    \]
    elements. But $\aff(C)$ has $2^{\dim(C)} \geq 2^8 = 256$ elements, so it is impossible for the first quad closure of $C$ to equal $\aff(C)$.
\end{proof}

\bigskip

\begin{proposition} \label{just5555-233333}
    Every $12$-cap of dimension $7$ has a basis of extended type $5 \hyph 5 \hyph 5 \hyph 5 \hyph (2,3,3,3,3,3)$.
\end{proposition}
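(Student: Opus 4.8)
The plan is to reduce immediately to a single exchange computation using Theorem~\ref{two12caps-v2}, which already guarantees that every $12$-cap $C$ of dimension $7$ has a basis of extended type $5 \hyph 5 \hyph 5 \hyph 5 \hyph (2,3,3,3,3,3)$ or $5 \hyph 5 \hyph 5 \hyph 5 \hyph (2,3,3,3,3,2)$. In the first case there is nothing to do, so the entire content of the proposition is the claim that a cap possessing only a basis of the second type also possesses a basis of the first type. This is the exact analogue of Proposition~\ref{just555332}, which re-based a $5 \hyph 5 \hyph 5 \hyph (3,3,3)$ cap as a $5 \hyph 5 \hyph 5 \hyph (3,3,2)$ cap, and the tool is again the Affine Basis Exchange Theorem~(\ref{newcapbasis}).

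Concretely, I would take the explicit template for the $(2,3,3,3,3,2)$ case recorded in Table~\ref{dim7-12cap-templates} (legitimate by Lemma~\ref{same-template} and the proof of Theorem~\ref{two12caps-v2}), so that $B = \{a_1,\dots,a_8\}$, $D=\{x_1,x_2,x_3,x_4\}$, and the two size-$2$ intersections occur between two disjoint pairs of the $B_i$. The decisive observation is that $a_1 \in (B_1 \cap B_2) \setminus (B_3 \cup B_4)$, so the Affine Basis Exchange Theorem applies with this $a_1$ and yields the basis $B' = (B \setminus \{a_1\}) \cup \{x_2\}$ with dependent set $D' = \{a_1, x_1, x_3, x_4\}$, together with the closed forms $B'_{x_1} = (B_1 \symdif B_2) \cup \{x_2\}$, $B'_{a_1} = (B_2 \setminus \{a_1\}) \cup \{x_2\}$, $B'_{x_3} = B_3$, $B'_{x_4} = B_4$. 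The rest is pure bookkeeping: compute the six cardinalities $|B'_i \cap B'_j|$ for $i,j$ ranging over the new dependent labels and check that exactly one of them is $2$, so that (after reordering the dependent elements) $B'$ has extended type $5 \hyph 5 \hyph 5 \hyph 5 \hyph (2,3,3,3,3,3)$. It is worth recording in the write-up why this happens: the original size-$2$ intersection $B_{23}$ passes through the exchange unchanged, reappearing as $B'_{a_1} \cap B'_{x_3} = B_2 \cap B_3$, whereas the other original size-$2$ intersection $B_{14}$ is routed through the symmetric difference, becoming $B'_{x_1} \cap B'_{x_4} = B_{14} \symdif B_{24}$, which for the template at hand has size $3$.

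The main obstacle is not any single calculation but making the right choice and confirming it is forced to work: one must verify that $a_1$ genuinely satisfies the hypothesis of Theorem~\ref{newcapbasis}, that $B'$ is still a basis with all four $|B'_i| = 5$, and that the particular pair whose intersection is enlarged is the one fed through the symmetric difference (this uses $|B_{14} \cap B_{24}| = 1$ in the template, so $|B_{14} \symdif B_{24}| = 3$ rather than $1$). Once the swap is fixed, everything is a short finite computation with subsets of an $8$-element set, and the proposition follows.
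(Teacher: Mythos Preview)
Your proposal is correct and follows essentially the same route as the paper: reduce to the $(2,3,3,3,3,2)$ template via Theorem~\ref{two12caps-v2} and apply the Affine Basis Exchange Theorem once to eliminate one of the two size-$2$ intersections. The only difference is the particular swap chosen --- you exchange $a_1 \in B_{12} \setminus (B_3 \cup B_4)$ for $x_2$, whereas the paper exchanges $a_5 \in B_{13} \setminus (B_2 \cup B_4)$ for $x_3$ --- and both computations yield a basis with exactly one pairwise intersection of size $2$.
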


\begin{proof}
    Let $C$ be an $12$-cap of dimension $7$. By Theorem~\ref{two12caps-v2}, we know $C$ has a basis of extended type $5 \hyph 5 \hyph 5 \hyph 5 \hyph (2,3,3,3,3,3)$ or $5 \hyph 5 \hyph 5 \hyph 5 \hyph (2,3,3,3,3,2)$. If the former, then the proof is complete.

    Suppose $C$ has a basis $B$ of extended type $5 \hyph 5 \hyph 5 \hyph 5 \hyph (2,3,3,3,3,2)$ 
    From the proof of Theorem~\ref{two12caps-v2}, we can write $B = \{a_1, \ldots, a_8\}$ such that have
    \begin{align*}
        x_1 &= a_1 + \cdots + a_5, \\
        x_2 &= a_1 + a_2 + a_3 + a_7 + a_8, \\
        x_3 &= a_3 + \cdots + a_7, \\
        x_4 &= a_2 + a_4 + a_6 + a_7 + a_8,
    \end{align*}
    where $x_1,x_2,x_3,x_4$ are the dependent elements.
    Since $a_5 \in B_1 \cap B_3$ but $a_5 \notin B_2 \cup B_4$, by the Affine Basis Exchange Theorem (\ref{newcapbasis}), we know that $B' = (B \setminus \{a_5\}) \cup \{x_2\}$ is a basis for $C$ with dependent set $D' = (D \setminus \{x_2\}) \cup \{a_5\}$, and
    \begin{align*}
        B'_{a_5} &= \{a_3, a_4, a_6, a_7, x_2\}, \\
        B'_2 &= B_2 = \{a_1, a_2, a_3, a_7, a_8\}, \\
        B'_1 &= \{ a_1, a_2, a_6, a_7, x_2\}, \\
        B'_4 &= B_4 = \{ a_2, a_4, a_6, a_7, a_8\}.
    \end{align*}
    Observe that
    \begin{align*} 
        |B'_{a_5} \cap B'_2| &= 2,
            & |B'_{a_5} \cap B_1| &= 3, \\
        |B'_{a_5} \cap B'_4| &= 3,
            & |B'_2 \cap B'_1| &= 3, \\
        |B'_2 \cap B'_4| &= 3,
            & |B'_1 \cap B'_4| &= 3.
    \end{align*} 
    Thus $B'$ is a basis for $C$ of extended type $5 \hyph 5 \hyph 5 \hyph 5 \hyph (2,3,3,3,3,3)$, with respect to the ordering  of $D' = \{a_5, x_2, x_1, x_4\}$.
\end{proof}

\bigskip

\begin{corollary} \label{one12cap}
    All $12$-caps of dimension $7$ are equivalent.
\end{corollary}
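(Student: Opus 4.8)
The plan is to mirror the argument already used for $11$-caps in Corollary~\ref{one11cap}. The key input is Proposition~\ref{just5555-233333}, which says that every $12$-cap of dimension $7$ possesses a basis of extended type $5 \hyph 5 \hyph 5 \hyph 5 \hyph (2,3,3,3,3,3)$. So it suffices to show that all $7$-dimensional $12$-caps carrying a basis of this one extended type are affinely equivalent.

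For this I would appeal to Case~1 in the proof of Theorem~\ref{two12caps-v2}. There it is shown that for any cap with a basis $B = \{a_1,\dots,a_8\}$ of extended type $5 \hyph 5 \hyph 5 \hyph 5 \hyph (2,3,3,3,3,3)$, the points can be relabeled so that the dependent elements are forced to be
\begin{align*}
    x_1 &= a_1 + \cdots + a_5, \\
    x_2 &= a_1 + a_2 + a_3 + a_7 + a_8, \\
    x_3 &= a_3 + \cdots + a_7, \\
    x_4 &= a_2 + a_3 + a_4 + a_6 + a_8.
\end{align*}
In other words, every such cap fits a single dependent set template in the sense of the discussion following Example~\ref{template-example}. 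Since all these caps share size $12$ and dimension $7$ and fit this common template, Lemma~\ref{same-template} yields that they are all affinely equivalent.

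Putting the two steps together shows that every $12$-cap of dimension $7$ is affinely equivalent to the single cap given by the template above, so there is exactly one equivalence class of $12$-caps in $\Z_2^7$. I do not expect any genuine obstacle here: the substantive work --- forcing the expression for $x_4$ from the intersection cardinalities, and verifying that the resulting set is quad-free --- was already carried out in Theorem~\ref{two12caps-v2}, and this corollary is just the bookkeeping combination of Proposition~\ref{just5555-233333}, that theorem, and Lemma~\ref{same-template}. The only minor point to keep in mind is that the ``without loss of generality'' relabelings appearing in the earlier proof are honest affine relabelings of a common ambient basis, so the template really does apply to every cap of the stated extended type rather than to a single representative.
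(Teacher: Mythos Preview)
Your proof is correct and follows essentially the same approach as the paper: invoke Proposition~\ref{just5555-233333} to reduce to a single extended type, cite the dependent set template constructed in Case~1 of Theorem~\ref{two12caps-v2}, and conclude via Lemma~\ref{same-template}. Your additional remark about the ``without loss of generality'' relabelings being genuine is a nice clarification beyond what the paper spells out.
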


\begin{proof}
    By Proposition~\ref{just5555-233333}, we know every $11$-cap of dimension $7$ has a basis of extended type $5 \hyph 5 \hyph 5 \hyph 5 \hyph (2,3,3,3,3,3)$. In the proof of Theorem~\ref{two12caps-v2}, we constructed a dependent set template for all such caps and bases. By Lemma~\ref{same-template}, this means all such caps are affinely equivalent. Hence there is only one equivalence class of $12$-caps in $\Z_2^7$.
\end{proof}

\section{Caps of size 13}

In this last section, we prove that there are no $13$-caps in $\Z_2^7$. We do this by first showing that all bases for such a cap must have a particular extended type, and then showing that this extended type is actually impossible.


\bigskip

\begin{theorem} \label{13no7}
    Let $C$ be a $13$-cap of dimension $7$ with basis $B$. Then $B$ must have type $5 \hyph 5 \hyph 5 \hyph 5 \hyph 5$, and $|B_{ij}| = 2$ for two disjoint pairs $\{i,j\} \subseteq \{1,\ldots,5\}$, and $|B_{ij}| = 3$ for all other pairs.
\end{theorem}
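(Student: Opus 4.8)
The plan is to work with the chosen basis $B$ (so $|B| = 8$) and dependent set $D = \{x_1,\dots,x_5\}$ (so $|D| = 13 - 8 = 5$), and to extract everything from the already-settled classification of $10$-, $11$-, and $12$-caps by passing to the subcaps $C \setminus \{x_m\}$ and $B \cup \{x_1, x_i, x_j\}$. Each of these contains $B$, so its affine span is $\aff(C)$ and it is a cap of dimension $7$ with $B$ as a basis; moreover the quantities $B_k$ and $|B_{ij}|$ are intrinsic to $B$ and the $x_k$, so they are inherited unchanged by every subcap.

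First I would pin down the type. By Lemma~\ref{morethan3} each $x_m$ is a sum of five or seven basis elements, and by Lemma~\ref{oneseven} at most one of them is a sum of seven, so $B$ has type $5\hyph 5\hyph 5\hyph 5\hyph 5$ or $7\hyph 5\hyph 5\hyph 5\hyph 5$. To rule out the latter, suppose $|B_1| = 7$. For any two indices $i, j \in \{2,3,4,5\}$ the set $B \cup \{x_1, x_i, x_j\}$ is an $11$-cap of dimension $7$ whose basis $B$ has type $7\hyph 5\hyph 5$, so Theorem~\ref{755implies443} forces $|B_{ij}| = 3$. Thus all six pairs inside $\{2,3,4,5\}$ have $|B_{ij}| = 3$, which makes $C \setminus \{x_1\}$ a $12$-cap of dimension $7$ with a basis of extended type $5\hyph 5\hyph 5\hyph 5\hyph (3,3,3,3,3,3)$ --- impossible by Lemma~\ref{forbiddenquadruples}. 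Hence $B$ has type $5\hyph 5\hyph 5\hyph 5\hyph 5$.

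With the type settled, Lemma~\ref{twoorthree} gives $|B_{ij}| \in \{2,3\}$ for every pair, so I would record this data as a graph $G$ on vertex set $\{1,\dots,5\}$ with an edge $\{i,j\}$ exactly when $|B_{ij}| = 2$; the goal becomes showing $G$ is a matching with exactly two edges. For each index $m$, the set $C \setminus \{x_m\}$ is a $12$-cap of dimension $7$ with a basis of type $5\hyph 5\hyph 5\hyph 5$, so Proposition~\ref{two12caps} says that the restriction of $G$ to the four remaining vertices has exactly one or two edges, and if two they are disjoint --- i.e. $G[T]$ is a nonempty matching for every $4$-subset $T$. If some vertex $v$ had degree at least $2$, choosing two of its neighbors together with any fifth vertex would give a $4$-subset on which $G$ is not a matching; so $G$ has maximum degree at most $1$, hence is itself a matching, of size $0$, $1$, or $2$. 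A matching of size at most $1$ misses some $4$-subset (delete an endpoint of the lone edge, or take any $4$-subset if $G$ is empty), contradicting nonemptiness; so $G$ has exactly two disjoint edges. That is precisely the assertion that $|B_{ij}| = 2$ for two disjoint pairs and $|B_{ij}| = 3$ for all other pairs.

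The routine bookkeeping --- verifying that each auxiliary set is a $7$-dimensional cap with $B$ a basis and that the type of $B$ is inherited --- is immediate. The only real content is organizing the six relevant $|B_{ij}|$ values so the $12$-cap and $11$-cap constraints apply uniformly; the graph formulation makes the ``maximum degree $\le 1$'' and ``every $4$-subset is hit'' deductions one line each, and it is the interplay of Theorem~\ref{755implies443} (killing the $7$) with Lemma~\ref{forbiddenquadruples} and Proposition~\ref{two12caps} (forcing the size-two matching) that carries the proof.
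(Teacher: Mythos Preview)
Your proof is correct and follows essentially the same approach as the paper's: rule out type $7\hyph 5\hyph 5\hyph 5\hyph 5$ via Theorem~\ref{755implies443} and Lemma~\ref{forbiddenquadruples}, then apply Proposition~\ref{two12caps} to each $C\setminus\{x_m\}$ to constrain the pattern of $|B_{ij}|=2$ pairs. Your graph formulation is a clean way to package the last step, but the underlying deductions (``disjoint pairs'' $\Leftrightarrow$ max degree $\le 1$; ``at least two'' $\Leftrightarrow$ every $4$-subset hit) match the paper's argument line for line.
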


\begin{proof}
    Let $C$ and $B$ be as in the theorem statement. Since $|B| = 8$, by Lemma~\ref{morethan3} each element of the dependent set $D$ is a sum of five or seven basis elements, and by Lemma~\ref{oneseven}, at most one of is a sum of seven basis elements. Therefore $B$ must have type $5 \hyph 5 \hyph 5 \hyph 5 \hyph 5$ or $7 \hyph 5 \hyph 5 \hyph 5\hyph 5$.
    
    Suppose $B$ has type $7 \hyph 5 \hyph 5 \hyph 5 \hyph 5$. Observe that for any pair $\{j,k\} \subseteq \{2,3,4,5\}$, the set $C_{1jk} = B \cup \{x_1, x_j, x_k\}$ is an $11$-cap of dimension $7$ with basis type $7 \hyph 5 \hyph 5$. By Lemma~\ref{755implies443}, the extended type of such a cap must be $7 \hyph 5 \hyph 5 \hyph (4,3,3)$. This implies that
    \[
        |B_{23}| = |B_{34}| = |B_{45}| = 3.
    \]
    Then $C_{2345} = B \cup \{x_2, x_3, x_4, x_5\}$ is a $12$-cap of dimension $7$ with basis $B$ of extended type $5 \hyph 5 \hyph 5 \hyph 5 \hyph (3,3,3,3,3,3)$. By Lemma~\ref{forbiddenquadruples}, this is impossible. Therefore the basis $B$ must have type $5 \hyph 5 \hyph 5 \hyph 5 \hyph 5$.

    Now consider the numbers $|B_{ij}|$ for pairs $\{i,j\}$ in $\{1,\ldots,5\}$. By Lemma~\ref{twoorthree} we know that each of these numbers is $2$ or $3$. If none or exactly one of these numbers is $2$, then for at least one value $1 \leq i \leq 5$, the set $C'_i = C \setminus \{x_i\}$ is a $12$-cap of dimension $7$ with basis $B$ of extended type $5 \hyph 5 \hyph 5 \hyph 5 \hyph (3,3,3,3,3,3)$. By Lemma~\ref{forbiddenquadruples}, this is impossible, so it must be that at least two of these numbers are $2$.
    
    Applying Proposition~\ref{two12caps} to each $12$-cap $C'_i = C \setminus \{x_i\}$, we conclude that if $|B_{ij}| = |B_{k\ell}| = 2$ for distinct pairs $\{i,j\}$ and $\{k,\ell\}$, then these pairs must be disjoint. Since there are only five elements of $D$, we must have exactly two disjoint pairs $\{i,j\}$ and $\{k,\ell\}$ such that $|B_{ij}| = |B_{k\ell}| = 2$.
\end{proof}

Now we prove that no $13$-cap can exist in dimension $7$.

\bigskip 

\begin{theorem} \label{no13caps}
    There are no $7$-dimensional $13$-caps.   
\end{theorem}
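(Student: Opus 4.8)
The plan is to argue by contradiction, using Theorem~\ref{13no7} to supply essentially all the structure. Suppose $C$ is a $7$-dimensional $13$-cap with basis $B = \{a_1,\ldots,a_8\}$ and dependent set $D$. By Theorem~\ref{13no7}, $B$ has type $5\hyph 5\hyph 5\hyph 5\hyph 5$, so $D = \{x_1,\ldots,x_5\}$ with $|B_i| = 5$ for each $i$, and among the ten pairs exactly two \emph{disjoint} pairs $\{i,j\}$ have $|B_{ij}| = 2$ while the other eight have $|B_{ij}| = 3$. Two disjoint pairs use only four of the five indices, so after relabeling the dependent elements I may assume $|B_{12}| = |B_{34}| = 2$ and $|B_{ij}| = 3$ for every other pair $\{i,j\} \subseteq \{1,\ldots,5\}$. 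The goal is then to derive a numerical contradiction, and the cleanest route I see is to compute the size of the total intersection $B_{12345} = B_1 \cap \cdots \cap B_5$ in two incompatible ways.

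First, applying Corollary~\ref{quads}(2) to the four sums-of-five $x_1, x_2, x_3, x_4$ --- whose value-$2$ pairs $\{1,2\}$ and $\{3,4\}$ are disjoint --- gives $|B_{1234}| = 0$, so $B_{12345} \subseteq B_{1234} = \emptyset$ and $|B_{12345}| = 0$. Second, I run inclusion-exclusion on $B_1 \cup \cdots \cup B_5$. This union has size $8$: by Lemma~\ref{sumiseight} already $|B_1 \cup B_2 \cup B_3| = 8$, and $B_1 \cup \cdots \cup B_5 \subseteq B$ forces equality. For the triple terms I use that each $B \cup \{x_i, x_j, x_k\}$ is a $7$-dimensional $11$-cap with basis of type $5\hyph 5\hyph 5$, so Corollary~\ref{triples} gives $|B_{ijk}| = 1$ exactly when $\{i,j,k\}$ contains one of $\{1,2\}$ or $\{3,4\}$, and $|B_{ijk}| = 2$ otherwise; since the two distinguished pairs are disjoint, no triple contains both, so six triples give $1$ and four give $2$, totalling $14$. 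For the quadruple terms, Corollary~\ref{quads} gives $|B_{1234}| = 0$ and $|B_{ijk\ell}| = 1$ for the remaining four quadruples (each containing exactly one distinguished pair), totalling $4$. Substituting $\sum|B_i| = 25$, $\sum|B_{ij}| = 2\cdot 2 + 8\cdot 3 = 28$, $\sum|B_{ijk}| = 14$, and $\sum|B_{ijk\ell}| = 4$ into the identity $8 = \sum|B_i| - \sum|B_{ij}| + \sum|B_{ijk}| - \sum|B_{ijk\ell}| + |B_{12345}|$ yields $|B_{12345}| = 1$, contradicting the first computation.

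I do not anticipate a genuine obstacle: once Theorem~\ref{13no7} is in hand, every intersection cardinality needed has already been pinned down by Corollaries~\ref{triples} and~\ref{quads} and Lemma~\ref{sumiseight}, so the argument is disciplined bookkeeping. The only places requiring care are the combinatorial counts --- verifying that precisely six of the ten triples meet one of the two disjoint distinguished pairs (no triple can contain both, since together they have four elements), and that among the five quadruples only $\{1,2,3,4\}$ contains both distinguished pairs while the other four contain exactly one --- together with the alternating signs in inclusion-exclusion. As a sanity check, or as an alternative proof for readers who prefer something concrete, one can instead write down the forced dependent-set template for $C \setminus \{x_5\}$ from the proof of Theorem~\ref{two12caps-v2}, solve the resulting membership constraints for $B_5$, and observe that they are inconsistent (one basis element would have to lie in $B_5$ ``twice''); but the two-way count of $|B_{12345}|$ avoids any case analysis on the position of $x_5$ and is shorter.
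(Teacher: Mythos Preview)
Your proposal is correct and follows essentially the same approach as the paper: assume a $13$-cap exists, invoke Theorem~\ref{13no7} to normalize so that $|B_{12}|=|B_{34}|=2$, use Corollaries~\ref{triples} and~\ref{quads} to compute all triple and quadruple intersection sizes, and then obtain a contradiction by comparing the inclusion--exclusion value $|B_{12345}|=1$ against the containment $B_{12345}\subseteq B_{1234}=\emptyset$. The bookkeeping (six triples versus four, one quadruple versus four) and the final arithmetic match the paper's proof exactly; your added remark about the alternative template-based verification is a nice aside but not needed.
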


\begin{proof}
    Let $C$ be a $13$-cap of dimension $7$. By Theorem~\ref{13no7}, we know $C$ has a basis $B$ of type $5 \hyph 5 \hyph 5 \hyph 5 \hyph 5$, where $|B_{ij}| = 2$ for two disjoint pairs $\{i,j\}$, and $|B_{ij}| = 3$ for all other pairs. We know $|B_i| = 5$ for each $i$,  and of the ten numbers $|B_{ij}|$ exactly two are $2$ and the rest are $3$. Hence
    \[
        \sum |B_i| = 5(5) = 25
        \quad \text{and} \quad
        \sum |B_{ij}| = 2(2) + 8(3) = 28.
    \]
    Suppose without loss of generality that $|B_{12}| = |B_{34}| = 2$.
    
    For any triple $\{i,j,k\}$, we may apply Corollary~\ref{triples} to the $11$-cap $C_{ijk} = B \cup \{x_i, x_j, x_k\}$ to find that
    \[
        |B_{ijk}| =
        \begin{cases}
            1 & \text{if one of $|B_{ij}|, |B_{ik}|, |B_{jk}|$ is $2$;}\\ 
            2 & \text{if $|B_{ij}| = |B_{ik}| = |B_{jk}| = 3$.}
        \end{cases}
    \]
    There are $\binom{5}{3} = 10$ triples of indices, six of which contain one of the pairs $\{1,2\}$ or $\{3,4\}$, and four of which that do not. Thus, we have
    \[
        \sum |B_{ijk}| = 6(1) + 4(2) = 14.
    \]

    For any quadruple $\{i,j,k,\ell\}$, we may apply Corollary~\ref{quads} to the $12$-cap $C_{ijk\ell} = B \cup \{x_i, x_j, x_k, x_{\ell}\}$ to find that
    \begin{equation} \label{eq:ijkl}
        |B_{ijk\ell}| =
        \begin{cases}
            0  & \text{if $|B_{rs}| = 2$ for two pairs $\{r,s\} \subseteq \{i,j,k,\ell\}$;}\\ 
            1  & \text{if $|B_{rs}| = 2$ for exactly one pair $\{r,s\} \subseteq \{i,j,k,\ell\}$.}
        \end{cases}
    \end{equation}
    There are $\binom{5}{4} = 5$ quadruples of indices. The only quadruple that satisfies the first case of equation~\eqref{eq:ijkl} is $\{1,2,3,4\}$, since $|B_{12}| = |B_{34}| = 2$. The remaining four quadruples satisfy the second case. Hence, we have
    \[
        \sum |B_{ijkl}| = 1(0) + 4(1) = 4.
    \]

    We are now ready to compute $|B_{12345}|$. Using the inclusion-exclusion principle and substituting the values from above, we have
    \begin{align*}
        |B_{12345}|
        &= \left| \bigcup B_i \right|
        - \sum |B_i| + \sum |B_{ij}| - \sum|B_{ijk}|
        + \sum |B_{ijkl}| \\
        &= 8 - 25 + 28 - 14 + 4 \\
        &= 1.
    \end{align*}
    However, $B_{12345} \subseteq B_{1234}$ and $|B_{1234}| = 0$, so we must have $|B_{12345}| = 0$, not $1$. Hence we have a contradiction, so no $7$-dimensional $13$-cap can exist.
\end{proof}

\bigskip

We have the following two corollaries, completing our characterization of caps of dimension 7. 

\bigskip

\begin{corollary} \leavevmode
    \begin{enumerate}
        \item The maximum cap size in $\AG(7,2)$ is $12$.
        \item Every complete cap of dimension $7$ has size $12$.
    \end{enumerate}
\end{corollary}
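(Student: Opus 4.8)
The plan is to obtain this corollary purely by bookkeeping: every cap size other than $12$ has already been excluded or handled by earlier results, so the work is just assembling them in the right order.

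For part (1), I would first argue that no cap in $\Z_2^7$ can have size $13$ or more. If $C \subseteq \Z_2^7$ is a cap with $|C| \geq 13$, then any $13$-element subset of $C$ is again a cap (subsets of caps are caps), and such a $13$-element cap contains a $10$-element subcap, which by Lemma~\ref{no10in6} has dimension at least $7$; since everything lives in $\Z_2^7$, that $13$-element cap has dimension exactly $7$, contradicting Theorem~\ref{no13caps}. Hence $|C| \leq 12$ for every cap in $\Z_2^7$. Since $12$-caps of dimension $7$ exist (Theorem~\ref{two12caps-v2} produces explicit templates, and one can also extend an $11$-cap as in the proof of Corollary~\ref{11notcomplete}), the maximum cap size in $\AG(7,2)$ is exactly $12$.

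For part (2), let $C$ be a complete cap with $\dim(C) = 7$; after a coordinate change we may assume $C \subseteq \Z_2^7$, so $\aff(C) = \Z_2^7$ has $2^7 = 128$ elements. A basis of $C$ has size $8$, so $|C| \geq 8$, and by part (1) $|C| \leq 12$, leaving the cases $|C| \in \{8,9,10,11,12\}$. For $|C| \in \{8,9\}$ the first quad closure is too small to fill the ambient space: $|\qc_1(C)| \leq |C| + \binom{|C|}{3} \leq 9 + \binom{9}{3} = 93 < 128$, so $\qc_1(C) \neq \aff(C)$ and $C$ is not complete. For $|C| = 10$, completeness is ruled out by Corollary~\ref{10capdim7incomplete}, and for $|C| = 11$ by Corollary~\ref{11notcomplete}. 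The only remaining possibility is $|C| = 12$.

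I do not anticipate a genuine obstacle here, since all the hard work is upstream; the one place to be slightly careful is the step in part (1) showing that \emph{no} cap of size $\geq 13$ exists in $\Z_2^7$ (rather than merely no cap of size exactly $13$ and dimension exactly $7$), which is why the argument passes to a $13$-element subcap and uses Lemma~\ref{no10in6} to pin down its dimension before invoking Theorem~\ref{no13caps}.
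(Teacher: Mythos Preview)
Your proposal is correct and follows essentially the same approach as the paper's proof, but is in fact more careful: the paper's proof simply says ``since $12$-caps exist and $13$-caps do not'' for part~(1), and for part~(2) only cites Corollary~\ref{11notcomplete} before concluding, whereas you explicitly (i) pin down the dimension of a putative $\geq 13$-cap before invoking Theorem~\ref{no13caps}, and (ii) dispose of the sizes $8$ and $9$ via the quad-closure count and size $10$ via Corollary~\ref{10capdim7incomplete}. These additions are welcome; the underlying strategy is the same.
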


\begin{proof}
    Since $12$-caps exist in $\AG(7,2)$ and $13$-caps do not, clearly $12$ is the maximum cap size of dimension $7$.    

    By Corollary~\ref{11notcomplete}, we know that $11$-caps cannot be complete. Since $12$-caps are of maximum size in dimension $7$, it follows that they are the only possible complete caps of dimension $7$.
\end{proof}

\printbibliography

\end{document}